\documentclass[a4paper, 15pt]{article}
\usepackage[utf8]{inputenc}
\usepackage[T1]{fontenc}
\usepackage[french,english]{babel}    
\usepackage{tikz}
\usetikzlibrary{positioning}

\usepackage[left=2cm,right=2cm,top=2cm,bottom=2cm]{geometry}         		
\usepackage{graphicx}				
\usepackage{amssymb}
\usepackage[backend=bibtex,style=numeric,
          maxnames=1000]{biblatex}
\usepackage{lineno}
\usepackage{amsthm}
\usepackage{amsmath}
\usepackage{amssymb}
\usepackage{amsfonts}
\usepackage{mathtools}
\usepackage{amssymb}
\usepackage{latexsym}
\usepackage{hyperref}
\usepackage{mathrsfs}
\usepackage[utf8]{inputenc}
\usepackage[parfill]{parskip}
\usepackage{xcolor}
\usepackage[affil-it]{authblk}
\usepackage{enumitem}

\usepackage[toc]{appendix}

\usepackage[left=2cm,right=2cm,top=2cm,bottom=2cm]{geometry}

\numberwithin{equation}{section}

\newtheorem{defi}{Definition}[section]
\newtheorem{unTheorem}{Theorem}[section]

\newtheorem{propal}[defi]{Proposition}
\newtheorem{rem}[defi]{Remark}
\newtheorem{cor}[defi]{Corollary}
\newtheorem{lem}[defi]{Lemma}
\newtheorem{ass}[defi]{Assumption}
\newtheorem{nota}[defi]{Notation}

\allowdisplaybreaks

\title{High-order long-time asymptotics for small solutions \\ to the one-dimensional nonlinear Schrödinger equation}

\author[1,2]{Jacek Jendrej\thanks{jendrej@imj-prg.fr}}
\author[1]{Tony Salvi\thanks{tony.salvi@imj-prg.fr}}
\affil[1]{
  \begingroup
  \setlength{\parskip}{0pt}
  Institut de Math\'ematiques de Jussieu, Sorbonne Universit\'e, Universit\'e Paris Cit\'e\par
  4 place Jussieu, 75005 Paris, France
  \endgroup
}
\affil[2]{
  \begingroup
  \setlength{\parskip}{0pt}
Faculty of Applied Mathematics, AGH University of Krak\'ow\par
  al. Adama Mickiewicza 30, 30-059 Krak\'ow, Poland
  \endgroup
}
\begin{document}

\maketitle
\begin{center}
    \textbf{Abstract}
    \begin{minipage}{\dimexpr\textwidth-1.0cm}
\vspace{0.5cm}
We investigate the global well-posedness and modified scattering for the one-dimensional Schrödinger equation with gauge-invariant polynomial nonlinearity. For small localized initial data of finite energy in a low-regularity class, we establish global existence of solution together with persistence of the localization of the associated profile. We further provide a rigorous derivation of the asymptotic expansion at arbitrary order of such solutions, taking into account long-range effects induced by the cubic component of the nonlinearity. Our analysis relies on the space-time resonance method.
\end{minipage}
\end{center}
\section{Introduction} 
In this paper, we examine the nonlinear Schrödinger equation (NLS) in $1$ dimension with a polynomial nonlinearity
\begin{equation}
\label{eq:NLS}
    i\partial_tu+\frac{1}{2}\Delta u=\sum_{n=1}^{d}\lambda_n|u|^{2n}u
\end{equation}
such that there exists $n\in\{1,\dots,d\}$ with $\lambda_n\neq0$,
and we consider the Cauchy problem for initial data $u_1$ at $t=1$, where we set 
$u_1=e^{i\Delta/2}f_1$. We assume that the initial data belongs to weighted Sobolev spaces as defined below.\\
\begin{nota}
The usual Sobolev spaces on $\mathbb{R}$ are written as $W^{k,p}_x$. In the particular case where $p=2$, we use the notation $H^k_x$. When working in the frequency variable, we write them as $W^{k,p}_\xi$ and $H^{k}_\xi$ respectively.\\\\
The weighted Sobolev spaces $H^{k,m}_x$ are defined by $H^{k,m}_x:=\{\phi\in S'|\;||\phi||_{H^{k,m}_x}=||\langle x\rangle^m\langle\nabla\rangle^k\phi||_{L^2_x}<\infty\}$. For $k\in\mathbb{N}$, we also have the equivalence $||\phi||_{H^{k,m}_x}\sim\left(\sum_{j=0}^{k}||\langle x\rangle^m\partial^j\phi||^2_{L^2_x}\right)^{1/2}$.\\
The Bochner-Lebesgue space $L^\infty_tX$, for $X$ a Banach space, denotes $L^\infty([1,\infty),X)$.
\end{nota}
In particular, the initial data must have finite energy $||u_1||_{H^{1,0}_x}< \varepsilon_0$ and be localized $||f_1||_{H^{0,2N+1}_x}<\varepsilon_0$, for some $\varepsilon_0>0$ sufficiently small. Then, our result concerns the asymptotic behavior of the solution $u$, corresponding to such initial data, at any order when $t\to\infty$. \\
\begin{unTheorem}
\label{unTheorem:mainTH}
There exists $\varepsilon_0>0$ such that, if $||u_1||_{H^{1,0}_x}+||f_1||_{H^{0,2N+1}_x}<\varepsilon_0$, then there exists a unique global solution to \eqref{eq:NLS}, such that $u\in C([1,\infty),H^1_x)$. Moreover this solution has the sharp decay 
\begin{align}
||u(t)||_{L^\infty_x}\lesssim \frac{C(\varepsilon_0)}{(1+t)^{1/2}}
\end{align}
and admits an asymptotic expansion of order $N$ under the form
\begin{align}
u(t,x)=\frac{e^{\frac{ix^2}{2t}-i\lambda_1|u_{0,0}\left(\frac{x}{t}\right)|^2\ln(t))-i\varphi(\frac{x}{
t})}}{(it)^{1/2}}\sum_{p=0}^N\sum^{2p}_{k=0}\frac{\ln(t)^k}{t^p}u_{p,k}\left(\frac{x}{t}\right)+u_{err.}(t,x)
\end{align}
where for $0\leq p\leq N$, $0\leq k\leq 2p$, $u_{p,k}\in W^{2(N-p),\infty}_x$, $\varphi\in  W^{2N,\infty}_x$ and $||u_{err.}(t)||_{L^\infty_x}=O(t^{-N-1/2-\delta})$ for some $\delta>0$.
\end{unTheorem}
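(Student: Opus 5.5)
We work with the profile $f(t)=e^{-it\Delta/2}u(t)$ and its Fourier transform $\hat f(t,\xi)$. The first step is global existence together with the sharp decay, via a bootstrap in the norm
\begin{align}
\|u\|_X:=\sup_{t\geq1}\Big(\|u(t)\|_{H^1_x}+t^{1/2}\|u(t)\|_{L^\infty_x}+\sum_{j=0}^{2N+1}t^{-j\delta_0}\|x^j f(t)\|_{L^2_x}\Big),
\end{align}
where $\delta_0>0$ is small. Local well-posedness in $H^1_x$ is standard, and mass and energy conservation (smallness of the data) control the $H^1_x$ part. The decay follows from the factorization $e^{it\Delta/2}=M D \mathcal{F} M$, with $M=e^{ix^2/2t}$ and $D$ the $L^2$-dilation by $t$. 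This gives
\begin{align}
u(t,x)=\frac{e^{ix^2/2t}}{(it)^{1/2}}\hat f\big(t,\tfrac{x}{t}\big)+O\big(t^{-3/4}\|xf\|_{L^2_x}\big).
\end{align}
Hence it suffices to bound $\|\hat f(t)\|_{L^\infty_\xi}$ uniformly in $t$.

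For the weighted norms, we use the commutation $xf=e^{-it\Delta/2}Ju$ with $J=x+it\nabla$, and iterate $J^j$ on the nonlinearity $\lambda_n|u|^{2n}u$. Since the nonlinearity is gauge invariant, $J$ acts as a derivation up to lower-order terms (through $J=M\,it\nabla\,\bar M$). So $\|J^j(|u|^{2n}u)\|_{L^2_x}$ is bounded by $\|u\|_{L^\infty_x}^{2n}\|J^ju\|_{L^2_x}$ plus mixed terms. The mixed terms are handled by interpolation (Gagliardo–Nirenberg for the vector fields $J$). For $n\geq2$ the nonlinear terms decay like $t^{-2}$ and are integrable. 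The cubic term gives $\varepsilon^2 t^{-1}$, which produces the slow growth $t^{C\varepsilon^2}\leq t^{\delta_0}$. For the $L^\infty_\xi$ bound on $\hat f$ we use the space–time resonance analysis of the Duhamel formula: the cubic phase $\phi=\xi^2-\eta_1^2+\eta_2^2-\eta_3^2$ has a unique space–time resonant point $\eta_i=\xi$, which yields the ODE
\begin{align}
\partial_t\hat f(t,\xi)=-\frac{i\lambda_1}{t}|\hat f(t,\xi)|^2\hat f(t,\xi)+R(t,\xi),\qquad |R|\lesssim\varepsilon^3 t^{-1-\delta}.
\end{align}
The higher-order terms are already integrable. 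This closes the bootstrap and proves modified scattering: there exists $u_{0,0}$ with $\hat f(t,\xi)=e^{-i\lambda_1|u_{0,0}|^2\ln t-i\varphi}u_{0,0}+O(t^{-\delta})$.

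For the expansion to order $N$, we refine both ingredients. First, expand $e^{it\Delta/2}f$ beyond leading order as
\begin{align}
u=\frac{M}{(it)^{1/2}}D\Big[\sum_{p<N+1}\frac{(i\partial_\xi^2/2)^p}{p!\,t^p}\hat f\Big]+\text{error},
\end{align}
obtained by Taylor expanding the factor $e^{iy^2/2t}$. The error is controlled by $\|x^{2N+1}f\|_{L^2_x}$, which is exactly why $2N+1$ weights are needed. This requires $\partial_\xi^{2p}\hat f$ to be bounded. Second, expand $\hat f$ itself. Setting $\hat g=e^{i\lambda_1\int_1^t|\hat f|^2\,ds/s}\hat f$, we iterate the ODE with its remainder expanded in powers of $t^{-1}$. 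Stationary phase at the resonant point with higher-order corrections gives terms $t^{-p}\ln(t)^k$, and each integration against the logarithmic phase raises the power of $\ln t$ by at most two per order, giving $k\leq 2p$. Each order consumes two $\xi$-derivatives, which explains the regularity $u_{p,k}\in W^{2(N-p),\infty}$ and $\varphi\in W^{2N,\infty}$.

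\textbf{Main obstacle.} The hardest step is controlling the remainder at order $t^{-N-1/2-\delta}$. This requires the $\xi$-derivatives of $\hat f$ up to order $2N$ to be bounded uniformly (not growing like $t^{j\delta_0}$), and the non-resonant contributions of the quintic and higher terms to be expanded as well, since they are only $t^{-2}$ and contribute at orders $p\geq1$. We would first upgrade the weighted bounds: differentiating the ODE shows that $\partial_\xi^j\hat f$ grows only like $\ln(t)^j$, with growth coming solely from the phase. We would then work with $\partial_\xi^j\hat g$, which is bounded. Finally, the remaining multilinear terms are treated by normal forms (integration by parts in time away from $\phi=0$) and integration by parts in $\eta$ away from $\partial_\eta\phi=0$, each gaining a factor $t^{-1}$ at the cost of derivatives. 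This is done recursively up to order $N$.
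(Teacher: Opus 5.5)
Your overall architecture matches the paper's. It bootstraps weighted norms of the profile and uses the modified profile $\hat g$ (the paper's $\hat w$) with uniform $W^{2N,\infty}_\xi$ bounds. It then expands the ODE for $\hat g$ in powers of $t^{-1}$ and integrates from $t$ to $\infty$ order by order, losing two derivatives per order. Finally, it Taylor-expands $e^{iy^2/2t}$ in the representation formula, which is where the $2N+1$ weights enter.

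Two steps, however, do not go through as written.

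\textbf{The weighted bootstrap.} With growth rates $t^{j\delta_0}$, linear in $j$, the bootstrap does not close. The Gronwall bound $t^{C\varepsilon^2}$ only covers the terms where all vector fields fall on one factor. Already for $j=2$, take the term $\bar u\,(Ju)^2$ in $J^2(|u|^2u)$ and estimate it with $\|Ju\|_{L^\infty_x}\lesssim t^{-1/2}\|Ju\|_{L^2_x}^{1/2}\|J^2u\|_{L^2_x}^{1/2}$. This gives
\[
\|u\|_{L^\infty_x}\|Ju\|_{L^\infty_x}\|Ju\|_{L^2_x}\lesssim \varepsilon^3\, t^{-1+\frac{5}{2}\delta_0},
\]
whose time integral grows like $t^{5\delta_0/2}\gg t^{2\delta_0}$, however small $\varepsilon$ is. There are two ways to fix this. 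You can use a superlinear hierarchy of exponents: the paper takes $\alpha_j\geq 6\alpha_{j-1}$ and interpolates between consecutive weights. Alternatively, put $\|\hat g\|_{W^{2N,\infty}_\xi}$ into the bootstrap norm from the outset (the paper includes it too). Then bound $\|J^a u\|_{L^\infty_x}$ by $t^{-1/2}\|\partial_\xi^a\hat f\|_{L^\infty_\xi}$, which grows only logarithmically, plus a term with extra decay in $t$.

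\textbf{Differentiating the ODE.} The step ``differentiating the ODE shows that $\partial_\xi^j\hat f$ grows like $\ln(t)^j$'' requires $\|\partial_\xi^q R(t)\|_{L^\infty_\xi}=O(t^{-1-\delta})$ for $q\leq 2N$. With the phase $\phi(\xi,\eta)$ you wrote, every $\xi$-derivative that hits $e^{it\phi}$ costs a factor $t$.

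The missing idea is the one the paper's asymptotic analysis rests on. For $|u|^{2n}u$, a linear change of variables turns the phase into the $\xi$-independent quadratic form $\frac12\eta\cdot Q^n\eta$; this is the Fourier-side form of your observation that $J$ is a derivation. Consequently $\xi$-derivatives fall only on the factors $\hat f$, and stationary phase becomes an exact identity:
\[
\int e^{it\eta\cdot Q^n\eta/2}\mathcal{N}^n(\xi,\eta)\,d\eta=\frac{1}{t^n}\int e^{-i\eta\cdot (Q^n)^{-1}\eta/2t}\,\mathcal{F}^{-1}_\eta[\mathcal{N}^n](\xi,\eta)\,d\eta .
\]
Taylor-expanding the Gaussian factor gives every coefficient explicitly as $\eta$-derivatives of $\mathcal{N}^n$ at $\eta=0$. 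The $q$-th $\xi$-derivative of the remainder after $r$ terms is bounded by
\[
t^{-n-r+\beta}\int|\eta|^{2r-2\beta}\,\bigl|\mathcal{F}^{-1}_\eta[\partial_\xi^q\mathcal{N}^n]\bigr|\,d\eta,\qquad \beta\in(3/4,1).
\]
Since $\mathcal{F}^{-1}_\eta[\mathcal{N}^n]$ is an explicit multilinear integral of copies of $f$, this is controlled by powers of $\|\langle x\rangle^{2r+q-1}f\|_{L^2_x}$.

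This single estimate handles the cubic remainder, the quintic and higher terms, and all their $\xi$-derivatives at once. Time non-resonance plays no role, because every contribution to the expansion comes from the single stationary point $\eta=0$ at fixed time. So the normal forms and cutoffs you propose are not needed. More importantly, your sketch never says how the neighbourhood of $\eta=0$ is expanded beyond leading order with remainders controlled by weighted $L^2$ norms, and that is the actual content of the argument.

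A smaller point: the powers of $\ln t$ come from $\xi$-derivatives falling on $\Theta$, each giving one $\ln t$. They do not come from time integration, which for $\ln(s)^k s^{-p-1}$ with $p\geq 1$ does not raise the power of the logarithm. Your count $k\leq 2p$ is nevertheless right.
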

This result may be viewed as an extension of the result of \cite{zbMATH06033880}, which establishes the first-order asymptotic expansion for the one-dimensional cubic case as $t\to+\infty$, to more general polynomial nonlinearities, as well as to the justification of asymptotic expansions at arbitrary order. Both the proof in \cite{zbMATH06033880} and the arguments developed in the present paper rely on the space-time resonance method originally introduced in \cite{zbMATH05528925}, \cite{zbMATH05590422}, \cite{germain2010globalexistencecoupledkleingordon}, and shortly presented in \cite{germain2011spacetimeresonances}. As in \cite{zbMATH06033880}, the asymptotic analysis is simplified by exploiting a specific gauge-invariant structure of the Schrödinger equation associated with the class of nonlinearities considered in \eqref{eq:NLS}, namely nonlinearities of the form $\lambda_n|u|^{2n}u$. In particular, when $\lambda_1\neq0$ in \eqref{eq:NLS}, that is, when a cubic nonlinearity is present as in \cite{zbMATH06033880}, long-range effects must be taken into account, leading to modified scattering.\\\\
To briefly review the historical development of this topic, some of the earliest works investigating (modified) scattering for the one-dimensional cubic nonlinear Schrödinger equation include \cite{6b42dcf7d8904df8ad13134b93af053b}, \cite{10.1063/1.522967} and \cite{1976ZhETF..71..203Z}. In particular, the asymptotic form is derived in \cite{10.1063/1.522967} (at arbitrary order) and in \cite{1976ZhETF..71..203Z}, though without error estimates. Subsequently, in \cite{zbMATH00722094}, high-order asymptotic expansions are given and error bounds are provided under the assumption that the initial data belongs to the Schwartz class, see also [6]. These results rely on the complete integrability of the cubic NLS and make use of the nonlinear steepest descent method developed in \cite{deift1992steepestdescentmethodoscillatory}. In the seminal work \cite{zbMATH01981617}, the regularity assumption on the initial data is relaxed to weighted Sobolev spaces, although only the first-order term is obtained. This approach is further refined in \cite{dieng2008longtimeasymptoticsnlsequation}, with the $\overline{\partial}$-nonlinear steepest method, see also \cite{dieng2018dispersiveasymptoticslinearintegrable}. It is later extended to compute the second-order term in \cite{liu2024higherorderasymptoticsnonlinear}.\\\\
In parallel, the seminal paper \cite{zbMATH01192427} provides the first rigorous proof of modified scattering for the cubic NLS without resorting to complete integrability. The analysis in  \cite{zbMATH01192427} applies to nonlinearities consisting of a cubic term plus a higher-order term (and also in dimensions 2 and 3 with the corresponding critical power nonlinearity) and for initial data in $H^{\gamma,\gamma}_x$ (for $\gamma>1/2$). In \cite{zbMATH05036894}, similar results are obtained for the one-dimensional case with a cubic plus quintic nonlinearity, using a more physical-space-based approach but requiring higher regularity. That work also establishes asymptotic completeness for decaying scattering data and constructs high-order asymptotic expansions from the scattering data (for the final-state problem). The space-time resonance method is then adapted to the one-dimensional cubic NLS in \cite{zbMATH06033880}, yielding a simpler proof than that of \cite{zbMATH01192427}. The arguments from the space-time resonance method are simplified due to the structure of the $|u|^2u$-type nonlinearity and require $H^{1,1}_x$ regularity.  The regularity assumptions are further relaxed in \cite{zbMATH06502053}, which treats initial data in $H^{0,1}_x$
 using a wave-packet (phase-space) localization method. The latter also addresses the case of a nonlinearity that is a sum of a cubic term plus a higher-order term. For a general overview of modified scattering for the one-dimensional cubic NLS, we refer the reader to \cite{zbMATH07623540}. As far as we know, the present work is the only one that addresses high-order asymptotic expansions for polynomial nonlinear Schrödinger equations of the form \eqref{eq:NLS}, even in the strictly cubic case 
($\lambda_n=0$ for all $n>1$) if one looks for methods that must not rely on complete integrability.\\\\
We also note that modified scattering for the one-dimensional cubic nonlinear Schrödinger equation in the presence of a potential was established in \cite{zbMATH06931169,delort:hal-01396705,zbMATH06595439,chen20221dcubicnlsnongeneric,Chen_2022,chen2023longtimedynamicssmallsolutions}. Moreover, the construction of the modified wave operator for small decaying scattering data (i.e. for the final-state problem) was justified in \cite{zbMATH00027419}, and later extended to large non-decaying data in the defocusing case in \cite{arXiv:2506.01871}.
\section{Acknowledgement and funding} 
We thank Gong Chen for suggesting the problem and many helpful discussions.
This work was supported by ERC Grant INSOLIT, No. 101117126.
\tableofcontents
\section{Notation and common results}
We recall the following standard interpolation result.\\
\begin{lem}
\label{lem:interpol}
For $\omega_2\geq\omega\geq\omega_1\geq0$, we have
\begin{align}
||f||_{H^{0,\omega}_x}\leq\left(||f||_{H^{0,\omega_1}_x}\right)^{\frac{\omega_2-\omega}{\omega_2-\omega_1}}\left(||f||_{{H^{0,\omega_2}_x}}\right)^{\frac{\omega-\omega_1}{\omega_2-\omega_1}}.
\end{align}
\end{lem}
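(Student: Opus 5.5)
The plan is to reduce the statement to Hölder's inequality applied pointwise in $x$ to the weight. Since $H^{0,\omega}_x$ is a weighted $L^2$ space, the norm is
\[
\|f\|_{H^{0,\omega}_x}^2=\int_{\mathbb{R}}\langle x\rangle^{2\omega}|f(x)|^2\,dx .
\]
If $\omega_1=\omega_2$, then $\omega=\omega_1=\omega_2$. In that case the exponents are not defined, and we read the statement as the trivial identity. Likewise, when $\omega=\omega_1$ or $\omega=\omega_2$ the inequality is an equality. So we assume $\omega_1<\omega<\omega_2$.

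Set
\[
\theta=\frac{\omega-\omega_1}{\omega_2-\omega_1}\in(0,1),
\qquad\text{so that}\qquad
\omega=(1-\theta)\omega_1+\theta\omega_2 .
\]
Factor the integrand as
\[
\langle x\rangle^{2\omega}|f|^2=\bigl(\langle x\rangle^{2\omega_1}|f|^2\bigr)^{1-\theta}\bigl(\langle x\rangle^{2\omega_2}|f|^2\bigr)^{\theta}.
\]
Apply Hölder's inequality with conjugate exponents $p=1/(1-\theta)$ and $q=1/\theta$. This gives
\[
\|f\|_{H^{0,\omega}_x}^2\le\Bigl(\int\langle x\rangle^{2\omega_1}|f|^2\Bigr)^{1-\theta}\Bigl(\int\langle x\rangle^{2\omega_2}|f|^2\Bigr)^{\theta}.
\]
Taking square roots yields the claimed bound, since $1-\theta=\frac{\omega_2-\omega}{\omega_2-\omega_1}$.

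If either norm on the right-hand side is infinite, the inequality holds trivially. This covers any concern about whether $f$ actually belongs to the weighted spaces.

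No step is a real obstacle. The only points needing care are the degenerate endpoint cases, handled above, and checking that the exponents match $\theta$ and $1-\theta$.
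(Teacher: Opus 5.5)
Your argument is correct. Writing $\langle x\rangle^{2\omega}|f|^2=(\langle x\rangle^{2\omega_1}|f|^2)^{1-\theta}(\langle x\rangle^{2\omega_2}|f|^2)^{\theta}$ and applying H\"older with exponents $1/(1-\theta)$ and $1/\theta$ gives the stated bound, and your treatment of the degenerate endpoint cases is fine. The paper gives no proof and simply calls this a standard interpolation result; your H\"older argument, i.e.\ log-convexity of the weighted $L^2$ norm in the weight exponent, is the standard justification the paper implicitly relies on.
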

We use the following standard Notation.\\
\begin{nota}
\label{nota:index}
We write $\overrightarrow{m}\in\mathbb{N}^n$ for a multi-index of size $n$ and we compute its norm with $|\overrightarrow{m}|:=\sum_{j=1}^n\overrightarrow{m}_j$.\\
We write $\binom{k}{\overrightarrow{m}}:=\frac{k!}{\prod_{j=1}^n\overrightarrow{m}_j!}$ for multinomial coefficients. \\
\end{nota}
\begin{nota}
For a Banach space $X$ and a given time $T>1$, we note $||\phi||_{C^0_TX}$ for $||\phi||_{C^0([1,T],X)}$.\\
\end{nota}
\begin{nota}
We note $\hat{\phi}$ or $\mathcal{F}(\phi)$ the Fourier transform of $\phi$ defined by 
\begin{align}
\hat{\phi}(\xi):=\frac{1}{(2\pi)^{1/2}}\int e^{-ix\xi}\phi(x)dx.
\end{align}
With this normalisation, we have $||\hat{\phi}||_{L^2_\xi}=||\phi||_{L^2_x}$.\\
\end{nota}
\begin{nota}
The symbol $\lesssim$ denotes $\leq$ up a to constant $C$ that depends on universal constant or fixed quantities of the problem such as $d$, the degree of the polynomial nonlinearity, and $(\lambda_n)_{n\in\mathbb{N}}$ its coefficient. 
\end{nota}
\section{Main ideas of the proof}
The first step consists in constructing a local solution $u$ to \eqref{eq:NLS} with suitable bounds on the (weighted) Sobolev norms of the profile $f=e^{-it\Delta/2}u$ and the modified profile $\hat{w}$ (which we introduce just after). This construction is carried out in Section 4 and relies on the assumption that the initial data $(u_1,e^{-i\Delta/2}u_1)$ lies in $H^{1,0}_x\times H^{0,2N+1}_x$.\\\\
Next, in a bootstrap argument, we show that we can extend the time of existence for small enough initial data (for $\varepsilon_0$ small enough), this is the content of Section \ref{section:bootstrap}. Within this bootstrap, we also propagate the required bounds on $f$ and $\hat{w}$. Control of these (weighted) Sobolev norms is the key ingredient that allows us to rigorously derive the asymptotic expansion of $\hat{w}$, and then $u$, via a representation formula.\\\\
This strategy has already been employed to obtain the first-order ($N=0$) asymptotics in the cubic case in \cite{zbMATH06033880} (or with a potential in \cite{zbMATH06931169}). In particular, if $\lambda_1\neq0$, modified scattering occurs, i.e., the cubic nonlinearity produces long-range effects and induces a phase shift correction in the asymptotic behavior of $u$. We now sketch these ideas in greater detail, assuming the bootstrap argument holds, and later emphasize the modifications required to handle higher-order expansions and general polynomial nonlinearities.\\\\
First, we introduce the profile 
\begin{equation}
\label{eq:profile}
    f(t)=e^{-it\Delta/2}u(t)
\end{equation}
which satisfies 
\begin{equation}
\label{eq:NLSf}
    i\partial_tf(t)=e^{-it\Delta/2}(\sum_{n=1}^{d}\lambda_n|u(t)|^{2n}u(t)),
\end{equation}
as follows from \eqref{eq:NLS}. We first focus on the cubic term, that is, the only term whose effects are launched already at first order. Thus, we consider
\begin{equation}
\label{eq:showfirstorder}
    \partial_t\hat{f}(t,\xi)=-i\frac{\lambda_1}{t}|\hat{f}(t)|^2\hat{f}(t)-i\mathfrak{R}^1_1(t,\xi)-i\mathfrak{R}^{>1}_0(t,\xi),\\
\end{equation}
with 
\begin{align*}
    &\mathfrak{R}^1_1=\lambda_1\frac{1}{2\pi}\int\frac{1}{s}(e^{-i\eta_1\eta_2/s}-1)\mathcal{F}^{-1}_{\eta}[\mathcal{N}^1](s,\xi,\eta)d\eta_1d\eta_2,\\&\mathfrak{R}^{>1}_0=\sum^d_{n=1}\lambda_n\frac{1}{(2\pi)^n}\int\frac{1}{s^n}e^{-i\eta \cdot Q^n\eta/2s}\mathcal{F}^{-1}_{\eta}[\mathcal{N}^n](s,\xi,\eta)d\eta.
\end{align*}
Here, the term $\mathcal{N}^n$ denotes a $(2n+1)$-product of $\hat{f}$ and $\eta=(\eta_1,\eta_2,\dots,\eta_{2n})$. These terms are defined more precisely in section \ref{section:notation}. In particular, the terms $\mathfrak{R}^1_1$ and $\mathfrak{R}^{>1}_0$ are error terms arising from a stationary phase approximation.\\\\
To deal with the first nonintegrable\footnote{In the bootstrap, we show that the profile $f$ is uniformly bounded in space and time.} cubic term, we look at $\hat{w}(t)=\Theta(t,\xi)\hat{f}(t,\xi)$, for $\Theta(t,\xi)=e^{i\int^t_1\frac{\lambda_1|\hat{f}(s,\xi)|^2}{s}ds}$, which solves 
\begin{equation}
\label{eq:modifprofileevointro1}
    \partial_t\hat{w}(t,\xi)=-i\Theta(t,\xi)\mathfrak{R}^1_1(t,\xi)-i\Theta(t,\xi)\mathfrak{R}^{>1}_0(t,\xi).\\
\end{equation}
Assuming that $||t^{-\alpha}f||_{L^\infty_t H^{0,1}_x}\lesssim C$, we obtain $||\mathfrak{R}^1_1||_{L^\infty_\xi}+||\mathfrak{R}^{>1}_0||_{L^\infty_\xi}=O(t^{-1-\beta})$ for some $\beta>0$. Integrating \eqref{eq:modifprofileevointro1} in time then shows that the family $(\hat{w}(t))_{t\geq1}$ 
 forms a Cauchy sequence in $L^\infty_\xi$, and therefore converges to a limit $\hat{w}_{0,0}\in L^\infty_\xi$ as $t\to\infty$. This argument is detailed in section \ref{subsection:asymptoticmodif}, and in particular in lemma \ref{lem:mainofbasecase}. We may then use the representation formula 
\begin{align}
\label{eq:trickformulauf}
u(t,x)=\frac{1}{(2i\pi t)^{1/2}}\int e^{i|x-y|^2/2t}f(t,y)dy
\end{align}
together with a Taylor expansion to deduce that, under the assumption $||t^{-\alpha}f||_{L^\infty_t H^{0,1}_x}\lesssim C$, we have
  \begin{align}
u(t,x)=\frac{e^{i|x|^2/2t}}{(2i\pi t)^{1/2}}\hat{f}\left(t,\frac{x}{t}\right)+O(t^{-1/2-\beta})
\end{align}
in $L^\infty_x$ for some $\beta>0$. Finally, using the convergence of $\hat{w}$ and a precise analysis of the phase shift $\Theta$ (see section \ref{subsection:asymptoticprof}), we get 
  \begin{align}
u(t,x)=\frac{e^{i|x|^2/2t}}{(2i\pi t)^{1/2}}e^{\frac{ix^2}{2t}-i\lambda_1|\hat{w}_{0,0}\left(\frac{x}{t}\right)|^2\ln(t)-i\varphi(\frac{x}{
t})}\hat{w}_{0,0}\left(\frac{x}{t}\right)+O(t^{-1/2-\beta})
\end{align}
as $t\to+\infty$. This result is established in section \ref{subsection:asymptoticsolut} using an argument adapted from \cite{zbMATH01192427}. \\\\
To go beyond the result of \cite{zbMATH06033880} (or \cite{zbMATH01192427}, \cite{zbMATH05036894} and \cite{zbMATH06502053}, which rely on different methods\footnote{We refer to the short review \cite{zbMATH07623540}.}), we now turn to the next term in the asymptotic expansion, where the quintic nonlinearity begins to play a more significant role. From \eqref{eq:modifprofileevointro1}, we see that if stronger control on the weighted Sobolev norms of $f$ is available, namely if $||t^{-\alpha}f||_{L^\infty_t H^{0,3}_x}\lesssim C$, and if $\hat{w}\in L^\infty_t W^{2,\infty}_x$, then we get the convergence of  $(\hat{w}(t))_{t\geq1}$ in $W^{2,\infty}_\xi$. By pushing the stationary phase approximation in \eqref{eq:modifprofileevointro1} further, we get 
\begin{align}
\label{eq:modifprofileevointro2}
    \partial_t\hat{w}(t)=&\frac{\lambda_1}{t^2}\Theta(t)\left(2\hat{f}(t)|\partial_\xi\hat{f}(t)|^2+\partial_\xi\hat{f}(t)^2\overline{\hat{f}(t)}+\hat{f}(t)^2\overline{\partial_\xi^2\hat{f}(t)}\right)-i\frac{\lambda_2}{t^2}\Theta(t)|\hat{f}(t)|^4\hat{f}(t)\\
   \nonumber &-i\Theta(t)\mathfrak{R}^1_2(t)-i\Theta(t)\mathfrak{R}^2_1(t)-i\Theta(t,\xi)\mathfrak{R}^{>2}_0(t).
\end{align}
In particular, the contribution $-i\frac{\lambda_2}{t^2}\Theta(t,\xi)|\hat{f}(t,\xi)|^4\hat{f}(t,\xi)$ originates from the quintic nonlinearity. Recalling that $\hat{w}(t,\xi)=\Theta(t,\xi)\hat{f}(t,\xi)$ and $\Theta(t,\xi)=e^{i\int^t_1\frac{\lambda_1|\hat{w}(s,\xi)|^2}{s}ds}$, we can rewrite \eqref{eq:modifprofileevointro2} entirely in terms of $\hat{w}(t,\xi)$ only. We have 
 \begin{align}
\label{eq:modifprofileevointro3}
    \partial_t\hat{w}(t)=&\frac{2\lambda_1}{t^2}\hat{w}(t)(|\partial_\xi\hat{w}(t)|^2+|\partial_\xi F(t)\hat{w}(t)|^2+\partial_\xi F(t)\hat{w}(t)\overline{\partial_\xi\hat{w}(t)}+\partial_\xi\hat{w}(t)\overline{\partial_\xi F(t)\hat{w}(t)})\\
   \nonumber &+\frac{\lambda_1}{t^2}\overline{\hat{w}(t)}(\partial_\xi\hat{w}(t)^2+(\partial_\xi F(t)\hat{w}(t))^2+2(\partial_\xi F\hat{w}(t))\partial_\xi\hat{w}(t))\\
   \nonumber&+\frac{\lambda_1}{t^2}\hat{w}(t)^2(\overline{\partial_\xi^2\hat{w}(t)}+2\overline{\partial_\xi F(t)\partial_\xi\hat{w}(t)}+\overline{\partial^2_\xi F(t)\hat{w}(t)})\\
  \nonumber &-i\frac{\lambda_2}{t^2}|\hat{w}(t)|^4\hat{w}(t)\\
   \nonumber &-i\Theta(t)\mathfrak{R}^1_2(t)-i\Theta(t)\mathfrak{R}^2_1(t)-i\Theta(t)\mathfrak{R}^{>2}_0(t)
\end{align}
for $F(t)=i\int^t_1\frac{\lambda_1|\hat{w}(s)|^2}{s}ds$. Using the fact that $\hat{w}(t,\xi)=\hat{w}_{0,0}(\xi)+O(t^{-\beta})$ in $W^{2,\infty}_\xi$, we deduce that\footnote{We use the same $\beta>0$ without loss of generality.} $F(t)=F_{0,1}\ln(t)+F_{0,0}+O(t^{-\beta})$ and that $\Theta(t)\mathfrak{R}^1_2(t)+\Theta(t)\mathfrak{R}^2_1(t)+\Theta(t)\mathfrak{R}^{>2}_0(t)=O(t^{-2-\beta})$ in $W^{2,\infty}_\xi$.  This yields 
\begin{align}
\label{eq:modifprofileevointro4}
\partial_t\hat{w}(t)&=\frac{\lambda_1}{t^2}\hat{W}_1(\hat{w}_{0,0})-i\frac{\lambda_2}{t^2}|\hat{w}_{0,0}|^4\hat{w}_{0,0}+\frac{\lambda_1^2\ln(t)}{t^2}\hat{W}_2(\hat{w}_{0,0})+\frac{\lambda_1^3\ln(t)^2}{t^2}\hat{W}_3(\hat{w}_{0,0})+O(t^{-2-\beta})
\end{align}
where $\hat{W}_1$, $\hat{W}_2$ and $\hat{W}_3$ are given in their exact form in the appendix \ref{section:appendix}.\\\\
Then, we integrate \eqref{eq:modifprofileevointro4} between $t$ and $+\infty$ to get the asymptotic expansion
\begin{align}
\label{eq:modifprofileevointro5}
&\hat{w}(t)=\hat{w}_{0,0}+\frac{1}{t}\hat{w}_{1,0}+\frac{\ln(t)}{t}\hat{w}_{1,1}+\frac{\ln(t)^2}{t}\hat{w}_{1,2}+O(t^{-1-\beta}).
\end{align}
Knowing that $\hat{f}(t,\xi)=e^{-F(t,\xi)}\hat{w}(t,\xi)$, with $F(t)=i\int^t_1\frac{\lambda_1|\hat{w}(s)|^2}{s}ds$, we also get the expansion for $\hat{f}(t,x)$ as follows 
\begin{align}
\label{eq:profileexpintro}
&\hat{f}(t)=\hat{f}_{0,0}+\frac{1}{t}\hat{f}_{1,0}+\frac{\ln(t)}{t}\hat{f}_{1,1}+\frac{\ln(t)^2}{t}\hat{f}_{1,2}+O(t^{-1-\beta})
\end{align}
with the regularity $\hat{f}_{0,0}\in W^{2,\infty}_\xi$ and $\hat{f}_{1,0},\hat{f}_{1,1},\hat{f}_{1,2}\in L^{\infty}_\xi$. We see how we are able to compute the expansion of $\hat{w}$ and $\hat{f}$ at order two in $L^\infty_\xi$, which yields the expansion for $u$ with the representation formula \eqref{eq:trickformulauf} and a Taylor expansion, from the expansion of $\hat{w}$ and $\hat{f}$ at order one in $W^{2,\infty}_\xi$.\\\\
Then, we proceed by induction and extend this strategy to any order $n\leq N$ as long as we have the control $||t^{-\alpha}f||_{L^\infty_t H^{0,2N+1}_x}+||\hat{w}||_{L^\infty_t W^{2N,\infty}_\xi}\lesssim C$, for some $0<\alpha$ small enough and some $0<C$.
\section{Preliminary computations}
\label{section:notation}
In this section, we give essential notations and some basic estimates that will be useful throughout the rest of the paper, as well as some important remarks.
\subsection{Formulation of the equations}
\label{subsection:formulation}
\begin{nota}
We note
\begin{equation}
\label{eq:Cterm}
    P(u)(t,x)=\int^t_1e^{-is\Delta/2}(\sum_{n=1}^{d}\lambda_n|u(s,x)|^{2n}u(s,x))ds,
\end{equation}
so that, 
\begin{equation}
\label{eq:NLSduhamf}
    f(t,x)=u_\star(x)-iP(u)(t,x).
\end{equation}
\end{nota}
Looking at the Fourier transform of $f$, we obtain 
\begin{equation}
\label{eq:NLSduhamfFourier}
    \hat{f}(t,\xi)=\hat{u}_\star(\xi)-i\hat{P}(u)(t,\xi).\\
\end{equation}
\begin{nota}
\label{nota:general}
From now on, we omit the dependence on $u$ to lighten the notation. We decompose the integral in time as follows:
\begin{equation}
    \hat{P}(t,\xi)=\sum_{n=1}^{d}\lambda_n\hat{P}^n(t,\xi),\\
\end{equation}
for 
\begin{align}
    \hat{P}^n(t,\xi)=\frac{1}{(2\pi)^n}\int^t_1\int e^{is\xi^2/2}\hat{u}(\xi-\eta_{2n})\overline{\hat{u}(\eta_{2n-1}-\eta_{2n})}\prod^{n-1}_{i=1}\hat{u}(\eta_{2i+1}-\eta_{2i})\overline{\hat{u}(\eta_{2i-1}-\eta_{2i})}\hat{u}(\eta_1)d\eta_1\dots d\eta_{2n}ds,
\end{align}
 This can also be written in term of the Fourier transform of the profile $\hat{f}$, as
\begin{equation}
    \hat{P}^n(t,\xi)=\frac{1}{(2\pi)^n}\int^t_1\int e^{is\Phi^n(\xi,\eta)}\hat{f}(\xi-\eta_{2n})\overline{\hat{f}(\eta_{2n-1}-\eta_{2n})}\prod^{n-1}_{i=1}\hat{f}(\eta_{2i+1}-\eta_{2i})\overline{\hat{f}(\eta_{2i-1}-\eta_{2i})}\hat{f}(\eta_1)d\eta_1\dots d\eta_{2n}ds\\
\end{equation}
with the phase
\begin{align*}
   \Phi^n(\xi,\eta)=\frac{1}{2}(\xi^2-(\xi-\eta_{2n})^2+(\eta_{2n-1}-\eta_{2n})^2+\sum^{n-1}_{i=1}(-(\eta_{2i+1}-\eta_{2i})^2+(\eta_{2i}-\eta_{2i-1})^2)-\eta_1^2)
\end{align*}
where $\eta=(\eta_1,\eta_2,\dots,\eta_{2n})$. We will also denote $d\eta_1\dots d\eta_{2n}$ with $d\eta$ as long as it is not confusing. Finally, we write $\eta_{<j+1}=(\eta_1,\eta_2,\dots,\eta_{j})$ the vector containing only the first $j$ component of the Fourier variable. \\
\end{nota}
\begin{rem}
We observe that 
\begin{align*}
   \Phi^n(\xi,\eta)=\eta_{2n}(\xi-\eta_{2n-1})+\sum^{n-1}_{i=1}\eta_{2i}(\eta_{2i+1}-\eta_{2i-1}).
\end{align*}
\end{rem}
\begin{nota}
\label{nota:Pfraknota}
We set 
\begin{align}
    \hat{\mathfrak{P}}^n(s,\xi)=\frac{1}{(2\pi)^n}\int e^{is\Phi^n(\xi,\eta)}\hat{f}(s,\xi-\eta_{2n})\overline{\hat{f}(s,\eta_{2n-1}-\eta_{2n})}\prod^{n-1}_{k=1}\hat{f}(s,\eta_{2k+1}-\eta_{2k})\overline{\hat{f}(s,\eta_{2k-1}-\eta_{2k})}\hat{f}(s,\eta_1)d\eta, 
\end{align}
such that $\hat{P}^n(t,\xi)=\int^t_1\hat{\mathfrak{P}}^n(s,\xi)ds$. \\
\end{nota}
\begin{rem}
\label{rem:noxiinphase}
In fact, as shown in the following lemma, the term $\hat{\mathfrak{P}}^n(s,\xi)$ (and hence $\hat{P}^n(t,\xi)$) can always be written in such a way that the phase does not depend on
$\xi$. This remark is also made in \cite{zbMATH06033880} for the cubic case. It is crucial for our analysis since propagating weighted norms of $f$ ultimately reduces to establishing energy estimates for $\partial_k\hat{f}(\xi)$. In particular, to control $\partial_k\hat{f}(\xi)$, one does not have to account for derivatives in $\xi$ falling on the phase in equation \eqref{eq:NLSduhamfFourier}, which would otherwise introduce an additional weight in $s$ for each derivative.\\
\end{rem}
\begin{lem}
\label{lem:phasenoxi}
For any $n\in\mathbb{N}^\star$, we have 
\begin{align}
\label{eq:phasenoxi}
    \hat{\mathfrak{P}}^n(s,\xi)=\frac{1}{(2\pi)^n}\int e^{is\Psi^n(\eta)}\hat{f}(s,\xi-\eta_{2n})\overline{\hat{f}(s,H^n_{n}-\eta_{2n})}\left(\prod^{N-1}_{i=1}\hat{f}(s,H^n_{i+1}-\eta_{2i})\overline{\hat{f}(s,H^n_{i}-\eta_{2i})}\right)\hat{f}(s,H^n_1)d\eta.
\end{align}
for 
\begin{align*}
    \Psi^n(\eta)=\sum^{n}_{i=1}\eta_{2i}\eta_{2i-1}=\frac{1}{2}\eta\cdot Q^n\eta
\end{align*}
with 
\begin{align*}
   Q=
\begin{pmatrix} 
	0 & 1 & & &\dots &  \\
	1 & 0 & & & & \vdots \\
	& &  &\ddots & & &\\
    \vdots &  &  &  &0 & 1\\
	& \dots & & & 1 & 0\\
	\end{pmatrix}
\end{align*}
and 
\begin{align*}
    H^n_{i}(\xi,\eta)=\xi-\sum^n_{k=i}\eta_{2i-1}.
\end{align*}
\end{lem}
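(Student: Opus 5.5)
The plan is a linear change of variables in the odd Fourier variables $\eta_{2i-1}$ that removes $\xi$ from the phase, followed by a rewriting of the arguments of the $\hat f$ factors in the new variables. I start from Notation \ref{nota:Pfraknota} and the Remark just before it, which gives
\begin{align*}
\Phi^n(\xi,\eta)=\eta_{2n}(\xi-\eta_{2n-1})+\sum_{i=1}^{n-1}\eta_{2i}(\eta_{2i+1}-\eta_{2i-1}).
\end{align*}
Each even variable $\eta_{2i}$ multiplies a difference of odd variables, or, for $i=n$, the difference $\xi-\eta_{2n-1}$. This suggests keeping the even variables and introducing new odd variables
\begin{align*}
\zeta_{2n-1}=\xi-\eta_{2n-1},\qquad \zeta_{2i-1}=\eta_{2i+1}-\eta_{2i-1}\quad (1\leq i\leq n-1),\qquad \zeta_{2i}=\eta_{2i}.
\end{align*}
In these variables $\Phi^n=\sum_{i=1}^n\zeta_{2i}\zeta_{2i-1}=\Psi^n(\zeta)=\frac12\zeta\cdot Q^n\zeta$, which no longer depends on $\xi$.

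Next I invert the map by a telescoping sum, starting from $\eta_{2n-1}=\xi-\zeta_{2n-1}$ and descending in $i$:
\begin{align*}
\eta_{2i-1}=\xi-\sum_{k=i}^{n}\zeta_{2k-1}=H^n_i(\xi,\zeta).
\end{align*}
Here I read the statement's formula for $H^n_i$ with the summation index $k$, i.e. $\sum_{k=i}^n\eta_{2k-1}$, which is clearly the intended meaning. For fixed $\xi$ the map $\eta\mapsto\zeta$ is an affine bijection of $\mathbb{R}^{2n}$ whose linear part is triangular with diagonal entries $\pm1$, so $|\det|=1$ and $d\eta=d\zeta$.

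I then substitute into each argument of the $\hat f$ factors:
\begin{align*}
&\xi-\eta_{2n}\ \text{is unchanged},\qquad \eta_{2n-1}-\eta_{2n}=H^n_n-\zeta_{2n},\qquad \eta_{2i+1}-\eta_{2i}=H^n_{i+1}-\zeta_{2i},\\
&\eta_{2i-1}-\eta_{2i}=H^n_i-\zeta_{2i},\qquad \eta_1=H^n_1.
\end{align*}
Renaming $\zeta$ as $\eta$ yields exactly \eqref{eq:phasenoxi}, where the product runs over $i=1,\dots,n-1$ (the upper limit $N-1$ in the statement should read $n-1$).

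The only delicate point is justifying the change of variables, since the $\eta$-integral is oscillatory and need not converge absolutely when $\hat f$ is merely in $L^2$. I would first take $f(s)$ Schwartz, where every integral is absolutely convergent and the computation is pure algebra. Alternatively, I would note that the integral is really an iterated convolution defining $\mathcal{F}(|u|^{2n}u)$ up to the factor $e^{is\xi^2/2}$. Then I would pass to $u(s)\in H^1_x$ by density, using continuity of $u\mapsto|u|^{2n}u$ in $L^2_x$ on $H^1_x$ in one dimension. The resulting identity then holds in $L^2_\xi$, with the integrals understood in the same iterated sense as in Notation \ref{nota:general}.
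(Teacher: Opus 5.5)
Your proof is correct and uses the paper's substitution in a single step. The paper argues by induction on $n$: it changes only $\eta_{2n-1}\mapsto\xi-\eta_{2n-1}$ and recognizes the remaining integral as $\hat{\mathfrak{P}}^{n-1}$ evaluated at $H^n_n$, and unrolling that induction gives exactly your triangular map $\zeta_{2i-1}=\eta_{2i+1}-\eta_{2i-1}$, $\zeta_{2n-1}=\xi-\eta_{2n-1}$.

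Your density step is more than needed. Since $u(s)\in H^1_x$ already gives $\hat f(s)\in L^1_\xi\cap L^2_\xi$, the $\eta$-integral converges absolutely for every $\xi$, and the substitution is justified directly by Fubini--Tonelli.
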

\begin{nota}
\label{nota:Nonlinearity}
We write 
\begin{align}
    \mathcal{N}^n(\xi,\eta)=\hat{f}(\xi-\eta_{2n})\overline{\hat{f}( H^n_{n}-\eta_{2n})}\prod^{n-1}_{i=1}\hat{f}(H^n_{i+1}-\eta_{2i})\overline{\hat{f}(H^n_{i}-\eta_{2i})}\hat{f}(H^n_1)
\end{align}
for the integrand of $\hat{\mathfrak{P}}^n(\xi)$ without the phase $\Psi^n$.
\end{nota}
\begin{proof}[Proof of Lemma \ref{lem:phasenoxi}]
We get the result by induction on $\hat{\mathfrak{P}}^n$. For $n=1$, we have 
\begin{align*}
    \hat{\mathfrak{P}}^1(\xi)&=\frac{1}{2\pi}\int e^{is\eta_2(\xi-\eta_1)}\hat{f}(\xi-\eta_{2})\overline{\hat{f}(\eta_{1}-\eta_{2})}\hat{f}(\eta_1)d\eta_1d\eta_{2}\\
    &=\frac{1}{2\pi}\int e^{is\eta_{2}\eta_{1}}\hat{f}(\xi-\eta_{2})\overline{\hat{f}(\xi-\eta_{1}-\eta_{2})}\hat{f}(\xi-\eta_1)d\eta_1 d\eta_{2}\\
    &=\frac{1}{2\pi}\int e^{is\eta_{2}\eta_{1}}\hat{f}(\xi-\eta_{2})\overline{\hat{f}(H^1_1-\eta_{2})}\hat{f}(H^1_1)d\eta_1 d\eta_{2}.
\end{align*}
Then for $n>1$, we assume that the equation \eqref{eq:phasenoxi} holds for $\mathfrak{P}^{n-1}$, we set $\eta'_{2n-1}=\xi-\eta_{2n-1}$ (and then forget about the prime) and get\footnote{In particular, $\eta_{<2n-1}=(\eta_1,\eta_2,\dots,\eta_{2N-2})$ is the vector of the first $2n-2$ Fourier variables as defined in notation \ref{nota:general}.}
\begin{align*}
   \hat{\mathfrak{P}}^n(\xi)&=\frac{1}{(2\pi)^n}\int e^{is\Phi^n(\xi,\eta)}\hat{f}(\xi-\eta_{2n})\overline{\hat{f}(\eta_{2n-1}-\eta_{2n})}\prod^{n-1}_{i=1}\hat{f}(\eta_{2i+1}-\eta_{2i})\overline{\hat{f}(\eta_{2i-1}-\eta_{2i})}\hat{f}(\eta_1)d\eta_1\dots d\eta_{2n}\\
    &=\frac{1}{(2\pi)^n}\int e^{is(\eta_{2n}\eta_{2n-1}+\Phi^{n-1}(H^n_n,\eta_{<2n-1}))}\hat{f}(\xi-\eta_{2n})\overline{\hat{f}(H^n_{n}-\eta_{2n})}\mathcal{N}^{n-1}(H^n_{n},\eta_{<2n-1})d\eta_1\dots d\eta_{2n},\\
    &=\frac{1}{(2\pi)}\int e^{is\eta_{2n}\eta_{2n-1}}\hat{f}(\xi-\eta_{2n})\overline{\hat{f}(H^n_{n}-\eta_{2n})}\hat{\mathfrak{P}}^{n-1}(f)(H^n_{n})d\eta_{2n-1} d\eta_{2n}.
\end{align*}
We also have  
\begin{align}
    H^{n-1}_i(H^n_n,\eta_{<2n-1})=H^n_n-\sum^{n-1}_{k=i}\eta_{2i-1}=\xi-\sum^{n}_{k=i}\eta_{2i-1}=H^n_{i},
\end{align}
and 
\begin{align*}
    \eta_{2n}\eta_{2n-1}+\Psi^{n-1}(\eta_{<2n-1})=\Psi^{n}(\eta),
\end{align*}
so that 
\begin{align*}
   \hat{\mathfrak{P}}^n(\xi)=\frac{1}{(2\pi)^n}\int e^{is\Psi^n(\eta)}\hat{f}(\xi-\eta_{2n})\overline{\hat{f}( H^n_{n}-\eta_{2n})}\prod^{n-1}_{i=1}\hat{f}(H^n_{i+1}-\eta_{2i})\overline{\hat{f}(H^n_{i}-\eta_{2i})}\hat{f}(H^n_1)d\eta_1\dots d\eta_{2n}.
\end{align*}
\end{proof}
\begin{nota}
\label{nota:Nonlinearitybis}
Moreover, for any derivatives in $\eta$ of $\mathcal{N}^n(\xi,\eta)$, that is, $(\prod_{j=1}^n\partial^{\overrightarrow{\nu}_j}_{\eta_{2j-1}}\partial^{\overrightarrow{\mu}_j}_{\eta_{2j}})\mathcal{N}^n(\xi,\eta)$, we shorten the notation with 
\begin{align}
    (\prod_{j=1}^n\partial^{\overrightarrow{\nu}_j}_{\eta_{2j-1}}\partial^{\overrightarrow{\mu}_j}_{\eta_{2j}})\mathcal{N}^n(\xi,\eta)=\partial^{\overrightarrow{\nu},\overrightarrow{\mu}}\mathcal{N}^n(\xi,\eta),
\end{align}
where $\overrightarrow{\mu}$ represents derivatives with even coordinate in $\eta$ and  $\overrightarrow{\nu}$ represents derivatives with odd coordinate in $\eta$.
\end{nota}
With equation \eqref{eq:NLSduhamfFourier} and notations \ref{nota:general} and \ref{nota:Pfraknota}, we have 
\begin{equation}
    i\partial_t\hat{f}(t,\xi)=\sum_{n=1}^{d}\lambda_n\hat{\mathfrak{P}}^n(t,\xi).
\end{equation}
For each $\lambda_n\hat{\mathfrak{P}}^n(t,\xi)$, we want to know the stationary phase expansion. \\
\begin{lem}
\label{lem:exactformP}
For any $N\in\mathbb{N}$, the following expansion holds:
\begin{align}
\lambda_n\hat{\mathfrak{P}}^n(t,\xi)=\lambda_n\sum_{k=0}^{N}\frac{1}{t^{k+n}}\hat{\mathfrak{P}}^n_k(t,\xi)+\mathfrak{R}^n_{N+1}(t,\xi)
\end{align}
with 
\begin{align}
\hat{\mathfrak{P}}^n_k(t,\xi)=\frac{i^k}{k!}\sum_{\overrightarrow{m}\in\mathbb{N}^n,|\overrightarrow{m}|=k}\binom{k}{\overrightarrow{m}}
\partial^{\overrightarrow{m},\overrightarrow{m}}\mathcal{N}^n(t,\xi,0)
\end{align}
and 
\begin{align}
\mathfrak{R}^n_{N+1}(t,\xi)=\frac{\lambda_n}{(2\pi)^n}\int\frac{1}{t^n}(e^{-i(\eta\cdot  (Q^n)^{-1}\eta)/2t}-\sum^{N}_{j=0}\frac{(-i\eta\cdot (Q^n)^{-1}\eta)^j}{(2t)^{j}j!})\mathcal{F}^{-1}_{\eta}[\mathcal{N}^n](t,\xi,\eta)d\eta
\end{align}
where $\binom{k}{\overrightarrow{m}}=\frac{k!}{\prod_{j=1}^n\overrightarrow{m}_j!}$, see notations \ref{nota:index} on indexes and notations \ref{nota:Nonlinearitybis} on the nonlinearity $\mathcal{N}$.
\end{lem}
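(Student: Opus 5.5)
The plan is to start from the phase-free representation of Lemma \ref{lem:phasenoxi}: $\hat{\mathfrak{P}}^n(t,\xi)=(2\pi)^{-n}\int e^{it\Psi^n(\eta)}\mathcal{N}^n(t,\xi,\eta)\,d\eta$ with $\Psi^n(\eta)=\frac12\eta\cdot Q^n\eta$. We move the oscillation to the Fourier side by Plancherel in the $2n$ variables $\eta$. The key input is the Fourier transform of the Gaussian-type kernel $e^{it\eta\cdot Q\eta/2}$. Here $Q=Q^n$ is block diagonal with $n$ copies of $\begin{pmatrix}0&1\\1&0\end{pmatrix}$, so it is symmetric with $Q^{-1}=Q$, $|\det Q|=1$, and signature zero (each block has eigenvalues $\pm1$). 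The standard formula for nondegenerate quadratic phases then gives
\begin{align*}
\mathcal{F}_\eta\big[e^{it\eta\cdot Q\eta/2}\big](y)=\frac{1}{t^{n}}\,e^{-iy\cdot Q^{-1}y/2t},
\end{align*}
with no Maslov factor, since the signature vanishes. One can also verify this block by block using the one-dimensional identity $\int e^{it\eta_1\eta_2}e^{-i(\eta_1y_1+\eta_2y_2)}d\eta_1d\eta_2=\frac{2\pi}{t}e^{-iy_1y_2/t}$, which avoids any general theory.

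Applying Parseval (after a regularization to be explained below) yields
\begin{align*}
\hat{\mathfrak{P}}^n(t,\xi)=\frac{1}{(2\pi)^n}\int\frac{1}{t^n}e^{-iy\cdot(Q^n)^{-1}y/2t}\,\mathcal{F}^{-1}_\eta[\mathcal{N}^n](t,\xi,y)\,dy .
\end{align*}
We then Taylor expand $e^{-iy\cdot Q^{-1}y/2t}=\sum_{j=0}^{N}\frac{(-iy\cdot Q^{-1}y)^j}{(2t)^jj!}+(\text{remainder})$. Multiplying by $\lambda_n$, the remainder piece is exactly $\mathfrak{R}^n_{N+1}$ as defined in the statement, so it only remains to identify the polynomial terms.

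For the $j$-th polynomial term, we use $y\cdot Q^{-1}y/2=\sum_{l=1}^n y_{2l-1}y_{2l}$. Since $\mathcal{F}^{-1}$ turns multiplication by $y_a$ into $-i\partial_{\eta_a}$ (up to the sign convention) and the $2\pi$ normalisations cancel, we get
\begin{align*}
\frac{1}{(2\pi)^n}\int (y_{2l-1}y_{2l})^{m_l}\cdots\,\mathcal{F}^{-1}_\eta[\mathcal{N}^n](y)\,dy=\big((-i)^2\partial_{\eta_{2l-1}}\partial_{\eta_{2l}}\big)^{m_l}\cdots\mathcal{N}^n(\xi,0),
\end{align*}
using $\int\mathcal{F}^{-1}[g]\,dy=(2\pi)^{n}g(0)$ in $2n$ dimensions with the paper's normalisation. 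Expanding $(\sum_l y_{2l-1}y_{2l})^j$ by the multinomial theorem produces $\binom{j}{\overrightarrow{m}}$. Collecting signs, $(-i)^j(-1)^j=i^j$, and together with the factor $1/j!$ this reproduces $\frac{i^k}{k!}\sum_{|\overrightarrow m|=k}\binom{k}{\overrightarrow m}\partial^{\overrightarrow m,\overrightarrow m}\mathcal{N}^n(t,\xi,0)$ with $k=j$, multiplied by $t^{-k-n}$. This gives the claimed expansion. I will carefully check the sign conventions of $\mathcal{F}$ versus $\mathcal{F}^{-1}$, since whether one gets $i^k$ or $(-i)^k$ depends on them.

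The main obstacle is justifying these manipulations. First, Plancherel with the non-integrable kernel $e^{it\Psi^n}$: I would insert a Gaussian damping $e^{-\epsilon|\eta|^2}$, compute the Fourier transform explicitly as a complex Gaussian, and let $\epsilon\to0$ by dominated convergence. This is valid provided $\mathcal{N}^n(\xi,\cdot)\in L^1$ and $\mathcal{F}^{-1}_\eta\mathcal{N}^n(\xi,\cdot)\in L^1$, which hold for $f$ in weighted spaces such as $H^{0,1}_x$, since products of $\hat f$'s evaluated at linear combinations of the $\eta$'s have well-behaved inverse transforms. Second, the moment identities require $|y|^{2N}\mathcal{F}^{-1}\mathcal{N}^n\in L^1$, i.e. enough smoothness of $\hat f$ (sufficient weights on $f$). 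At the level of this algebraic lemma I would state it for $f$ regular enough, for instance Schwartz, and extend by density where it is used later. The remainder $\mathfrak{R}^n_{N+1}$ is only defined, not estimated, here, so no bound on it is needed.
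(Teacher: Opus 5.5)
Your proposal is correct and follows the paper's proof. You start from Lemma \ref{lem:phasenoxi}, use Parseval to replace $e^{it\eta\cdot Q^n\eta/2}$ by $t^{-n}e^{-i\eta\cdot (Q^n)^{-1}\eta/2t}$ paired with $\mathcal{F}^{-1}_\eta[\mathcal{N}^n]$, Taylor expand, and identify the polynomial terms with $\frac{i^k}{k!}\sum\binom{k}{\overrightarrow{m}}\partial^{\overrightarrow{m},\overrightarrow{m}}\mathcal{N}^n(t,\xi,0)$ via the block structure of $Q^n$. Your sign bookkeeping holds regardless of convention, since derivatives come in pairs and $(-i)^k(-1)^k=i^k$. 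Your Gaussian regularization and integrability conditions only supply a justification the paper leaves implicit; they do not change the argument.
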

\begin{proof}
From lemma \ref{lem:phasenoxi} and notation \ref{nota:Nonlinearity}, we have that
\begin{align*}
  \hat{\mathfrak{P}}^n(t,\xi)&=\frac{1}{(2\pi)^n}\int e^{it\Psi^n(\eta)}\hat{f}(t,\xi-\eta_{2n})\overline{\hat{f}(t,H^n_{n}-\eta_{2n})}\prod^{n-1}_{i=1}\hat{f}(t,H^n_{i+1}-\eta_{2i})\overline{\hat{f}(t,H^n_{i}-\eta_{2i})}\hat{f}(t,H^n_1)d\eta\\
    &=\frac{1}{(2\pi)^n}\int e^{it\eta\cdot Q^n\eta/2}\mathcal{N}^n(t,\xi,\eta)d\eta\\
    &=\frac{1}{(2\pi)^n}\int \frac{1}{t^n}e^{-i(\eta\cdot (Q^n)^{-1}\eta)/(2t)}\mathcal{F}^{-1}_{\eta}[\mathcal{N}^n](t,\xi,\eta)d\eta\\
    &=\frac{1}{(2\pi)^n}\int\frac{1}{t^n}(\sum^{N}_{j=0}\frac{(-i\eta\cdot (Q^n)^{-1}\eta)^j}{(2t)^{j}j!}+e^{-i(\eta\cdot  (Q^n)^{-1}\eta)/(2t)}-\sum^{N}_{j=0}\frac{(-i\eta\cdot (Q^n)^{-1}\eta)^j}{(2t)^{j}j!})\mathcal{F}^{-1}_{\eta}[\mathcal{N}^n](t,\xi,\eta)d\eta\\
    &=\sum_{k=0}^{N}\frac{1}{t^{k+n}}\hat{\mathfrak{P}}^n_k(t,\xi)+\frac{1}{\lambda_n}\mathfrak{R}^n_{N+1}(t,\xi)
\end{align*}
where 
\begin{align*}
    \hat{\mathfrak{P}}^n_k(t,\xi)&=\frac{1}{(2\pi)^n}\int \frac{1}{2^kk!}(-i\eta\cdot (Q^n)^{-1}\eta)^k\mathcal{F}^{-1}_{\eta}[\mathcal{N}^n](t,\xi,\eta)d\eta\\
    &=\frac{i^k}{k!}\sum_{\overrightarrow{m}\in\mathbb{N}^n,|\overrightarrow{m}|=k}\binom{k}{\overrightarrow{m}}
(\prod_{j=1}^n\partial^{\overrightarrow{m}_j}_{\eta_{2j-1}}\partial^{\overrightarrow{m}_j}_{\eta_{2j}})\mathcal{N}^n(t,\xi,0)\\
 &=\frac{i^k}{k!}\sum_{\overrightarrow{m}\in\mathbb{N}^n,|\overrightarrow{m}|=k}\binom{k}{\overrightarrow{m}}
\partial^{\overrightarrow{m},\overrightarrow{m}}\mathcal{N}^n(t,\xi,0).
\end{align*}
where we use the particular form of $Q^n$ given in \ref{lem:phasenoxi}.
\end{proof}
\begin{rem}
The term $\hat{\mathfrak{P}}^n_k(t,\xi)$ is made of a sum of products. Each such product contains $2n+1$ terms consisting of $n+1$ occurrences of $\hat{f}(t,\xi)$ (or its derivatives) and $n$ occurrences of $\overline{\hat{f}(t,\xi)}$ (or its derivatives). The total number of derivatives appearing in each product, as well as the maximum number of derivatives on each term in the product, is equal to $2k$.\\
\end{rem} 
\begin{rem}
The term $\mathfrak{R}^n_{N+1}(t,\xi)$ is interpreted as an error term. \\
\end{rem} 
\begin{nota}
\label{nota:sumtoterror}
We denote the sum $\sum_{n=N+2}^{d}\lambda_n\hat{\mathfrak{P}}^n(t,\xi)$, or equivalently $\sum_{n=N+2}^{d}\mathfrak{R}^n_{0}(t,\xi)$, with $\mathfrak{R}^{> N+1}_0(t,\xi)$. This error term represents the nonlinearities of degree superior or equal to $2N+3$, which do not contribute to the asymptotic approximation at order $N$. \\
\end{nota}
The exact form of $\partial^{\overrightarrow{m},\overrightarrow{m}}\mathcal{N}^n(t,\xi,0)$, and thus $\hat{\mathfrak{P}}^n_k(t,\xi)$, is given by the next lemma, after we define the configuration for derivatives in the next definition, and the estimates on $\mathfrak{R}^n_{N+1}(t,\xi)$ are established in the subsection that follows. \\
\begin{defi}
\label{defi:config}
For $J\in\mathbb{N}^{n\times m}$, interpreted as a vector of multi-index, and $k\in\{1,\dots,n\}$, the term $J_k$ denotes the multi-index $\overrightarrow{m}\in\mathbb{N}^{m}$ such that, for every $j\in\{1,\dots,m\}$, $\overrightarrow{m}_j=J_{k,j}$.
Then, for $\overrightarrow{\nu}\in\mathbb{N}^n$, we define $\mathscr{J}_{\overrightarrow{\nu}}:=\{J\in\mathbb{N}^{n\times(2n+1)}|\;\forall k\in\{1,\dots,n\},\forall l\in\{1,\dots,2n+1\}, |J_k|=\overrightarrow{\nu}_k,\text{ and if }l> 2k,\;  J_{k,l}=0\}$, and for $\overrightarrow{\mu}\in\mathbb{N}^n$, we define $\mathscr{K}_{\overrightarrow{\mu}}:=\{K\in\mathbb{N}^{n\times 2}|\;\forall k\in\{1,\dots,n\}, |K_k|=\overrightarrow{\mu}_k\}$. Then, we set $\mathscr{L}_{\overrightarrow{\mu},\overrightarrow{\nu}}:=\mathscr{J}_{\overrightarrow{\mu}}\times\mathscr{K}_{\overrightarrow{\nu}}$.\\
\end{defi}
\begin{lem}
\label{lem:exactformN}
We have
\begin{multline}
    \partial^{\overrightarrow{m},\overrightarrow{m}}\mathcal{N}^n(\xi,0)=\sum_{(J,K)\in\mathscr{L}_{\overrightarrow{\mu},\overrightarrow{\nu}}}\left(\prod^n_{j=1}\binom{\overrightarrow{m}_j}{K_j}\binom{\overrightarrow{m}_j}{J_j}(-\partial_\xi)^{K_{j,1}+\sum_{l=1}^{n}J_{l,2j+1}}\hat{f}(\xi)(-\partial_\xi)^{K_{j,2}+\sum_{l=1}^{n}J_{l,2j}}\overline{\hat{f}(\xi)}\right)\times\\(-\partial_\xi)^{\sum_{l=1}^{n}J_{l,1}}\hat{f}(\xi)
\end{multline}
\end{lem}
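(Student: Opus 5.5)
The proof will be a direct application of the multivariate Leibniz rule to the product $\mathcal{N}^n(\xi,\eta)$ of Notation \ref{nota:Nonlinearity}, followed by evaluation at $\eta=0$. At $\eta=0$ every argument $H^n_i$, $\xi-\eta_{2n}$, $H^n_{i+1}-\eta_{2i}$, $H^n_i-\eta_{2i}$ reduces to $\xi$. I will therefore list, for each of the $2n+1$ factors of $\mathcal{N}^n$, which variables $\eta_l$ it depends on and with which sign. Since $H^n_i=\xi-\sum_{k\ge i}\eta_{2k-1}$, each $\partial_{\eta_{2j-1}}$ acts on a factor depending on $H^n_i$ with $i\le j$ as $-\partial_\xi$ applied to that factor. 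Likewise, $\partial_{\eta_{2j}}$ acts only on the two factors containing $-\eta_{2j}$, namely $\hat f(H^n_{j+1}-\eta_{2j})$ and $\overline{\hat f(H^n_j-\eta_{2j})}$ (with $H^n_{n+1}:=\xi$), again as $-\partial_\xi$.

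The steps are as follows. First, I index the factors by $l\in\{1,\dots,2n+1\}$: $l=1$ is $\hat f(H^n_1)$, $l=2j$ is $\overline{\hat f(H^n_j-\eta_{2j})}$, and $l=2j+1$ is $\hat f(H^n_{j+1}-\eta_{2j})$. Second, I apply $\partial^{\overrightarrow{m}_j}_{\eta_{2j}}$, which involves only the two factors $l=2j,2j+1$. The binomial Leibniz rule distributes these derivatives as $K_{j,1}+K_{j,2}=\overrightarrow{m}_j$, with weight $\binom{\overrightarrow{m}_j}{K_j}$ and sign $(-1)$ per derivative. This produces the index set $\mathscr{K}$. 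Third, I apply $\partial^{\overrightarrow{m}_k}_{\eta_{2k-1}}$. The variable $\eta_{2k-1}$ appears in $H^n_i$ exactly when $i\le k$, i.e.\ in the factors $l\le 2k$. The multinomial Leibniz rule distributes these derivatives as $J_{k,l}$ with $J_{k,l}=0$ for $l>2k$ and $|J_k|=\overrightarrow{m}_k$, with weight $\binom{\overrightarrow{m}_k}{J_k}$. This is exactly the constraint defining $\mathscr{J}$. Because all dependences are through translations of the argument, the derivatives commute, and the order of application does not matter. Fourth, summing all derivatives landing on each factor and setting $\eta=0$ gives, for factor $l$, the exponent $\sum_k J_{k,l}$ plus the relevant $K$ entry. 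Regrouping the factors in pairs $(2j+1,2j)$ then yields the product formula stated in the lemma.

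The main obstacle is purely bookkeeping: matching the paper's indexing conventions (which of $K_{j,1},K_{j,2}$ goes with $\hat f$ versus $\overline{\hat f}$, and the constraint $l\le 2k$ versus the pairing of factors $2j$ and $2j+1$) with the statement. I will also reconcile the apparent typos, namely $\mathscr{L}_{\overrightarrow{\mu},\overrightarrow{\nu}}$ with $\overrightarrow{\mu}=\overrightarrow{\nu}=\overrightarrow{m}$, and $\sum_{k=i}^n\eta_{2k-1}$ in $H^n_i$. To do this cleanly, I first check the case $n=1$, where $\mathcal{N}^1=\hat f(\xi-\eta_2)\overline{\hat f(\xi-\eta_1-\eta_2)}\hat f(\xi-\eta_1)$, and then argue the general case by the factor-to-variable incidence described above.
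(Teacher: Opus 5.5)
Your proposal is correct and follows the same route as the paper. The paper's proof also applies the Leibniz rule to $\mathcal{N}^n(\xi,\eta)$: $K$ distributes the $\eta_{2j}$-derivatives over the two factors containing $\eta_{2j}$, $J$ distributes the $\eta_{2k-1}$-derivatives over the factors depending on $H^n_i$ with $i\le k$, and then one sets $\eta=0$. Your explicit incidence check, that $\eta_{2k-1}$ enters exactly the factors $l\le 2k$ (the support condition defining $\mathscr{J}$), supplies the bookkeeping the paper leaves to ``direct calculations.''
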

\begin{proof}
Let $\overrightarrow{\mu},\overrightarrow{\nu}\in\mathbb{N}^n$, we want to compute any derivatives of $\mathcal{N}^n(\xi,\eta)$ in $\eta$, that is, $\partial^{\overrightarrow{\nu},\overrightarrow{\mu}}\mathcal{N}^n(\xi,\eta)$ (see notations \ref{nota:Nonlinearity}).
We recall that 
\begin{align*}
     \mathcal{N}^n(\xi,\eta)&=\hat{f}(\xi-\eta_{2n})\overline{\hat{f}( H^n_{n}-\eta_{2n})}\prod^{n-1}_{i=1}\hat{f}(H^n_{i+1}-\eta_{2i})\overline{\hat{f}(H^n_{i}-\eta_{2i})}\hat{f}(H^n_1)\\
     &=\prod^{n}_{i=1}\hat{f}(H^n_{i+1}-\eta_{2i})\overline{\hat{f}(H^n_{i}-\eta_{2i})}\hat{f}(H^n_1).
\end{align*}
By direct calculations, we have 
\begin{align*}
    \partial^{\overrightarrow{\nu},\overrightarrow{\mu}}\mathcal{N}^n(\xi,\eta)=\sum_{(K,J)\in\mathscr{K}_{\overrightarrow{\mu}}\times\mathscr{J}_{\overrightarrow{\nu}}}(\prod^n_{k=1}\binom{\overrightarrow{\mu}_k}{K_k}\binom{\overrightarrow{\nu}_k}{J_k})\prod^n_{j=1}\mathcal{Q}_j^{K_{j,1},K_{j,2},\sum_{l=1}^{n}J_{l,2j+1},\sum_{l=1}^{n}J_{l,2j}}(\xi,\eta)(-\partial_\xi)^{\sum_{l=1}^{n}J_{l,1}}\hat{f}(H^n_{1})
\end{align*}
where 
\begin{align*}
    \mathcal{Q}_j^{\kappa_1,\kappa_2,\iota_1,\iota_2}(\xi,\eta)=(-\partial_\xi)^{\kappa_1+\iota_1}\hat{f}(H^n_{j+1}-\eta_{2j})(-\partial_\xi)^{\kappa_2+\iota_2}\overline{\hat{f}(H^n_{j}-\eta_{2j})}.
\end{align*}
Here $K$ and $J$ represent configurations of even and odd derivatives. In the particular case of $\overrightarrow{\mu}=\overrightarrow{\nu}$ and $\eta=0$, together with the preceding remark, we recover the stated expression. 
\end{proof}
\subsection{Basic error estimates}
\label{subsection:errorestimate}
We provide the main estimate for the error term $ \mathfrak{R}^n_r(t)$ when $t\to+\infty$.\\
\begin{propal}
\label{propal:controlerror}
Given $p\in\mathbb{N}$. If $||t^{-\alpha}\langle x\rangle^{p+1} f||_{L^\infty_tL^2_x}\leq C$ for some $1/(16d+8)>\alpha>0$ and some $C>0$ then, for $r,q\in\mathbb{N}$ such that $2r+q-2=p$ if $r>0$ or $p=q$ if $r=0$, there exists $1>\delta>0$ such that for $s\geq1$
\begin{align}
&||\mathfrak{R}^n_r(s)||_{W^{q,\infty}_\xi}\lesssim s^{-n-r+1-\delta}C^{2n+1}.
\end{align}
Moreover, if $||\hat{w}||_{L^\infty_tW^{p,\infty}_\xi}\leq C$, then, under the same relations for $q,r$ and $p$, there exists $1>\delta>0$ such that for $s\geq1$
\begin{align}
&||\Theta\mathfrak{R}^n_r(s)||_{W^{q,\infty}_\xi}\lesssim s^{-n-r+1-\delta}h(C)C^{2n+1}
\end{align}
for some polynomial $h$.
\end{propal}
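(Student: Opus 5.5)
We prove the estimate on the Fourier side, starting from the representation of $\mathfrak{R}^n_r$ in Lemma \ref{lem:exactformP}:
\begin{align*}
\mathfrak{R}^n_{r}(s,\xi)=\frac{\lambda_n}{(2\pi)^n s^n}\int\Big(e^{-i\eta\cdot(Q^n)^{-1}\eta/2s}-\sum_{j=0}^{r-1}\frac{(-i\eta\cdot(Q^n)^{-1}\eta)^j}{(2s)^jj!}\Big)\mathcal{F}^{-1}_\eta[\mathcal{N}^n](s,\xi,\eta)\,d\eta .
\end{align*}
First, $\xi$-derivatives only hit $\mathcal{N}^n$. Since $\mathcal{N}^n(\xi,\eta)$ is a product of $\hat f$ evaluated at affine combinations of $\xi$ and $\eta$, $\partial_\xi^q\mathcal{N}^n$ is a finite sum of products of $\partial^{a_l}\hat f$ (or conjugates) with $\sum a_l=q$. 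By the Leibniz rule it therefore suffices to bound, for each such configuration $\mathcal{M}$, the quantity $s^{-n}\int |e^{-i\phi/2s}-T_{r-1}(\phi/2s)|\,|\mathcal{F}^{-1}_\eta\mathcal{M}|\,d\eta$, where $\phi=\eta\cdot(Q^n)^{-1}\eta$ and $T_{r-1}$ is the Taylor polynomial.

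For the phase we use the elementary interpolation
\begin{align*}
|e^{-ix}-T_{r-1}(x)|\lesssim \min(|x|^{r-1},|x|^r)\le |x|^{r-1+\theta}\quad(\theta\in[0,1]).
\end{align*}
(For $r=0$ the bracket is just $e^{-ix}$, bounded by $1$.) This gives a factor $s^{-r+1-\theta}|\eta|^{2(r-1+\theta)}$, so the total time decay is $s^{-n-r+1-\theta}$. Set $\delta\approx\theta-(2n+1)\alpha$.

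It remains to control $\int|\eta|^{2(r-1+\theta)}|\mathcal{F}^{-1}_\eta\mathcal{M}|\,d\eta$. Here $\mathcal{F}^{-1}_\eta\mathcal{M}$ is, up to a linear change of variables, a tensor-type convolution structure in physical space. The factors $\hat f(\cdot-\eta_{2i})$ and $\hat f(H^n_i\cdots)$ become modulated copies of $x^{a_l}f$, so it is a product of $2n+1$ functions $y^{a_l}f$ evaluated at linear combinations of the $2n$ variables. We use $|\eta|^{\gamma}\lesssim\sum|\eta_j|^\gamma$ and distribute the weight onto the factors via the linear change of variables (each $\eta_j$ is a difference of arguments). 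Then Cauchy–Schwarz gives $L^1$ bounds of the form $\|\langle x\rangle^{b}f\|_{L^2}$ with $b\le a_l+2(r-1+\theta)+1/2^+$, where the $1/2^+$ converts $L^2$ to $L^1$ in each one-dimensional integration. The remaining factors are placed in $\|\hat f\|_{L^\infty}\lesssim\|f\|_{H^{0,1}}$ or in $L^2$.

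The total weight needed is $q+2r-2+2\theta+(\text{a fraction})$. Since $q+2r-2=p$, everything fits into $\|\langle x\rangle^{p+1}f\|_{L^2}\lesssim Cs^{\alpha}$ provided $2\theta+\tfrac12^+\le1$, say $\theta=1/8$. For $r=0$ we take $\theta=0$ formally and instead exploit the $s^{-n}$ decay together with $n\ge2$ in $\mathfrak{R}^{>N+1}_0$. Collecting the factors yields $s^{-n-r+1-\theta+(2n+1)\alpha}C^{2n+1}$, and $\alpha<1/(16d+8)$ makes $\delta=\theta-(2n+1)\alpha>0$.

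\textbf{Main obstacle.} The hard step is the bookkeeping of weights. The same $|\eta|$ power must be split among factors that already carry derivatives $a_l$, and when all $q$ derivatives fall on one factor its weight reaches $p+1$ exactly. The fractional excess must then be absorbed using Lemma \ref{lem:interpol}, interpolating between $H^{0,1}$ and $H^{0,p+1}$ with the $s^\alpha$ loss, rather than with extra weight. If this fails, a dyadic splitting in $|\eta|\lessgtr s^{1/2}$ replaces the crude interpolation of the phase bracket.

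For the $\Theta$ statement, $\partial_\xi^q(\Theta\mathfrak{R})$ is handled by Leibniz. $\partial^j_\xi\Theta$ is a polynomial in $\partial^{\le j}_\xi F$, and $|\partial^{i}F|\lesssim\lambda_1\ln(s)\,h(\|\hat w\|_{W^{p,\infty}})$ because $|\hat f|=|\hat w|$ and $\Theta$ is unimodular. The logarithmic losses are absorbed by shrinking $\delta$, giving the factor $h(C)$.
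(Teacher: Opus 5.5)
Your proposal is correct and follows essentially the same route as the paper. Both use a fractional Taylor bound $|e^{-ix}-T_{r-1}(x)|\lesssim |x|^{r-1+\theta}$ (the paper's $|x|^{r-\beta}$ with $\beta=1-\theta>3/4$), the physical-space form of $\mathcal{F}^{-1}_\eta[\mathcal{N}^n]$ with the unimodular change of variables of Lemmas \ref{lem:Rtrans}--\ref{lem:Rcalculus}, weighted $L^1\to L^2$ Cauchy--Schwarz at a $\langle x\rangle^{1/2+\varepsilon}$ cost, and logarithmic bounds on $\partial_\xi^k\Theta$. Your closing hedges are unnecessary: with $\theta=1/8$ the worst factor carries weight $p+2\theta+\tfrac12+\varepsilon<p+1$ (not exactly $p+1$), so no interpolation or dyadic splitting is needed. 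Likewise, for $r=0$ the bound $s^{-n+(2n+1)\alpha}C^{2n+1}$ already gives the claim with $\delta=1-(2n+1)\alpha$ for every $n\geq1$, without any restriction to $n\geq2$.
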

To prove proposition \ref{propal:controlerror}, we first need the two following lemmas. \\
\begin{lem}
\label{lem:Rtrans}
For $s\in[1,+\infty)$, $\xi\in\mathbb{R}$ and $\sigma\in\mathbb{R}^{2n+1}$, we have 
\begin{align}
\label{equation:Rtransform}
    \mathcal{F}^{-1}_{\eta}[\mathcal{N}^n](s,\xi,\sigma)=\frac{1}{(2\pi)^{1/2}}e^{i\xi(\sum^{2n}_{j=1}\sigma_{j})}\int e^{-i\xi x}\mathcal{M}^n(s,x,\sigma)dx
\end{align}
with 
\begin{align*}
   \mathcal{M}^n=\left(\prod^{n}_{j=1}f(x-\sigma_{2j}-\sum^{2(j-1)}_{k=1}\sigma_k-\sum^{n-1}_{k=j}\sigma_{2k+1})\overline{f(x-\sum^{2(j-1)}_{k=1}\sigma_{k}-\sum^{n-1}_{k=j}\sigma_{2k+1})}\right)f(x-\sum^{n-1}_{k=0}\sigma_{2k+1}).
\end{align*}
\end{lem}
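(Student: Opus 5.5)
The identity is a direct computation. I would write each factor of $\mathcal{N}^n$ through the Fourier inversion formula for $f$, and then read off the inverse Fourier transform in $\eta$ from the resulting oscillatory integral.

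\textbf{Step 1: expand each factor.} Using the convention $\hat{\phi}(\zeta)=(2\pi)^{-1/2}\int e^{-iy\zeta}\phi(y)\,dy$, I write every factor of $\mathcal{N}^n(s,\xi,\eta)$ (Notation \ref{nota:Nonlinearity}) as
\begin{align*}
\hat f(\zeta)=(2\pi)^{-1/2}\int e^{-iy\zeta}f(y)\,dy,\qquad \overline{\hat f(\zeta)}=(2\pi)^{-1/2}\int e^{iy\zeta}\overline{f(y)}\,dy .
\end{align*}
I introduce one space variable per factor: $y_j$ for $\hat f(H^n_{j+1}-\eta_{2j})$, $z_j$ for $\overline{\hat f(H^n_j-\eta_{2j})}$, and $y_0$ for $\hat f(H^n_1)$. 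Here I use the intended definition $H^n_i=\xi-\sum_{k=i}^n\eta_{2k-1}$, with $H^n_{n+1}=\xi$.

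\textbf{Step 2: collect the phase.} The total phase is linear in $(\xi,\eta)$. The coefficient of $\xi$ is $-(\sum_j y_j+y_0-\sum_j z_j)$. The coefficient of $\eta_{2j}$ is $y_j-z_j$. The coefficient of $\eta_{2i-1}$ is $\sum_{j<i}y_j+y_0-\sum_{j\le i}z_j$, since $\eta_{2i-1}$ enters $H^n_l$ exactly for $l\le i$.

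\textbf{Step 3: invert in $\eta$.} I apply $\mathcal{F}^{-1}_\eta$ at the point $\sigma$, which contributes the factor $(2\pi)^{-n}\int e^{i\eta\cdot\sigma}\,d\eta$. Integrating in $\eta$ gives $(2\pi)^{2n}$ times a product of deltas, which impose
\begin{align*}
y_j-z_j=-\sigma_{2j},\qquad \sum_{j<i}y_j+y_0-\sum_{j\le i}z_j=-\sigma_{2i-1}.
\end{align*}
This is a triangular linear system. I solve it recursively, expressing each $y_j$ and $z_j$ as $x$ minus a combination of the $\sigma$'s, where $x:=y_0+\sum_{k=0}^{n-1}\sigma_{2k+1}$ is taken as the free variable. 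This is also the step I expect to be the main obstacle: the bookkeeping of which $\sigma$'s appear, and with which signs, must come out exactly as in the arguments of $\mathcal{M}^n$. I would check it first for $n=1$, where the claim reads $\mathcal{M}^1=f(x-\sigma_2)\overline{f(x)}f(x-\sigma_1)$, and then run an induction mirroring the proof of Lemma \ref{lem:phasenoxi}, peeling off the pair $(\eta_{2n-1},\eta_{2n})$.

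\textbf{Step 4: the residual phase and constants.} After the substitution, the residual $\xi$-phase $-\xi(\sum_j y_j+y_0-\sum_j z_j)$ becomes $-\xi x+\xi\sum_{j=1}^{2n}\sigma_j$ (up to the sign conventions fixed in Step 3). This produces the factor $e^{i\xi\sum_j\sigma_j}\int e^{-i\xi x}\mathcal{M}^n\,dx$. For the constants, the $2n+1$ expansions give $(2\pi)^{-(2n+1)/2}$, the inversion gives $(2\pi)^{-n}$, and the $\eta$-integral gives $(2\pi)^{2n}$, for a total of $(2\pi)^{-1/2}$, as claimed. If a sign disagreement appears, the fix is an orientation convention, i.e.\ $\sigma\mapsto-\sigma$, and I would adjust the change of variables to match. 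All of this is justified for $f\in L^2\cap L^1$ and then extended by density in the distributional sense.
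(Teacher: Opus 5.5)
Your proposal is correct, but your main route differs from the paper's. The paper argues by induction on $n$. The case $n=1$ uses the one-dimensional identities $\mathcal{F}^{-1}_\eta\bigl(\hat g(\xi-\eta)e^{-ix\eta}\bigr)(\sigma)=e^{i\xi(\sigma-x)}g(x-\sigma)$ and their conjugate analogue. For $n>1$ it uses $\mathcal{N}^n(\xi,\eta)=\hat f(\xi-\eta_{2n})\overline{\hat f(H^n_n-\eta_{2n})}\,\mathcal{N}^{n-1}(H^n_n,\eta_{<2n-1})$: it peels off $(\eta_{2n-1},\eta_{2n})$ and inserts the inverse Fourier transform in $\xi$ of $\mathcal{G}^{n-1}$ supplied by the induction hypothesis.

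You instead expand all $2n+1$ factors at once and resolve the $2n$ delta constraints simultaneously. Your Step 2 coefficients are right, and the system you flag as the main obstacle closes by direct back-substitution, with no induction and no sign adjustment. One gets $z_j=y_0+\sigma_{2j-1}-\sum_{k<j}\sigma_{2k}$ and $y_j=z_j-\sigma_{2j}$. With $x=y_0+\sum_{k=1}^n\sigma_{2k-1}$ this gives $z_j=x-\sum_{k\neq j}\sigma_{2k-1}-\sum_{k<j}\sigma_{2k}$, which is exactly the argument of the $j$-th $\overline{f}$ in $\mathcal{M}^n$. The $y_j$ and $y_0$ are the arguments of the $f$ factors, and the residual phase is $-\xi\bigl(y_0+\sum_j(y_j-z_j)\bigr)=-\xi x+\xi\sum_{k=1}^{2n}\sigma_k$. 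So drop the $\sigma\mapsto-\sigma$ hedge: the statement fixes the convention, and no such fix is needed.

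Do state the Jacobian explicitly. Your constant count $(2\pi)^{-1/2}$ relies on the constraint map in $(z_1,y_1,\dots,z_n,y_n)$ being triangular with diagonal entries $\pm1$, so resolving the deltas costs no determinant.

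What your route buys is a closed form for all arguments at once and one-shot constant bookkeeping. Moreover, the unimodular linear change of variables $(x,\sigma)\mapsto(y_0,z_j,y_j)$ you exhibit is precisely what Lemma \ref{lem:Rcalculus} rests on, so that lemma comes almost for free. The paper's induction only ever handles one-dimensional Fourier identities and mirrors the recursion of Lemma \ref{lem:phasenoxi}, at the cost of heavier nested notation. Both arguments manipulate delta functions formally. Pairing with a test function in $\sigma$ and using Fubini, which is legitimate for $f\in L^1$, makes either one rigorous.
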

\begin{proof}
Let $\mathcal{G}^n(s,\xi,\sigma)=\mathcal{F}^{-1}_{\eta}[\mathcal{N}^n](s,\xi,\sigma)$ and let us now forget about the time variable $s$.
 Before starting the calculation, let us recall that in 1 dimension
\begin{align*}
    \mathcal{F}^{-1}_{\eta}(\hat{g}(\xi-\eta)e^{-i x\eta})(\sigma)&=(2\pi)^{-1/2}\int \hat{g}(\xi-\eta)e^{i\eta(\sigma- x)}d\eta\\
    &=(2\pi)^{-1/2}\int \hat{g}(\eta)e^{i(\xi-\eta)(\sigma- x)}d\eta\\
    &=e^{i\xi(\sigma- x)}g(x-\sigma)
\end{align*}
and 
\begin{align*}
    \mathcal{F}^{-1}_{\eta}(\overline{\hat{g}(\xi-\eta)e^{-i x\eta}})(\sigma)=e^{i\xi(\sigma+ x)}\overline{g(x+\sigma)}.
\end{align*}
We proceed by induction. First, we show that \eqref{equation:Rtransform} holds for $N=1$. We have 
\begin{align*}
    \mathcal{G}^1(\xi,\sigma)&=\mathcal{F}^{-1}_{\eta_1,\eta_2}[\hat{f}(\xi-\eta_{2})\overline{\hat{f}( H^1_{1}-\eta_{2})}\hat{f}(H^1_1)](\sigma)\\
    &=\mathcal{F}^{-1}_{\eta_1,\eta_2}[\hat{f}(\xi-\eta_{2})\overline{\hat{f}( \xi-\eta_1-\eta_{2})}\hat{f}(\xi-\eta_1)](\sigma)\\
    &=\mathcal{F}^{-1}_{\eta_1}[\frac{1}{(2\pi)^{1/2}}\int e^{i\xi(\sigma_2-y)}f(y-\sigma_{2})e^{i(\xi-\eta_1)y}\overline{f( y)}\hat{f}(\xi-\eta_1)dy](\sigma_1)\\
    &=\mathcal{F}^{-1}_{\eta_1}[\frac{1}{(2\pi)^{1/2}}\int e^{i\xi\sigma_2}f(y-\sigma_{2})\overline{f( y)}e^{-i\eta_1y}\hat{f}(\xi-\eta_1)dy](\sigma_1)\\
    &=\frac{1}{(2\pi)^{1/2}}\int e^{i\xi\sigma_2}f(y-\sigma_{2})\overline{f( y)}e^{i\xi(\sigma_1-y)}f(y-\sigma_1)dy\\
    &=\frac{1}{(2\pi)^{1/2}}e^{i\xi(\sigma_2+\sigma_1)}\int e^{-i\xi y}f(y-\sigma_{2})\overline{f( y)}f(y-\sigma_1)dy\\
    &=\frac{1}{(2\pi)^{1/2}}e^{i\xi(\sigma_2+\sigma_1)}\int e^{-i\xi y}\mathcal{M}^1(y,\sigma)dy,
\end{align*}
which is the form of \eqref{equation:Rtransform}.  
Now, assuming that \eqref{equation:Rtransform} holds for $n-1$, we may compute the inverse Fourier transform of $\mathcal{G}^{n-1}$ in $\xi$ easily (as the dependence in $\xi$ is only in the phase). Indeed,
\begin{align*}
    \mathcal{F}^{-1}_{\xi}(\mathcal{G}^{n-1})(y,\sigma_{<2n-1})&=\int \mathcal{M}^{n-1}(x,\sigma_{<2n-1})\delta(y+\sum^{2(n-1)}_{j=1}\sigma_{j}-x)dx\\
    &=\mathcal{M}^{n-1}(y+\sum^{2(n-1)}_{j=1}\sigma_{j},\sigma_{<2n-1}).
\end{align*}
Then, we can observe that $\mathcal{G}^n(\sigma)$ directly depends on $\mathcal{F}^{-1}_{\xi}(\mathcal{G}^{n-1})$ in the following calculations: 
\begin{align*}
    \mathcal{G}^n(\sigma)&=\mathcal{F}^{-1}_{\eta}[\hat{f}(\xi-\eta_{2n})\overline{\hat{f}( H^n_{n}-\eta_{2n})}\left(\prod^{n-1}_{i=1}\hat{f}(H^n_{i+1}-\eta_{2i})\overline{\hat{f}(H^n_{i}-\eta_{2i})}W\right)\hat{f}(H^n_1)](\sigma)\\
    &=\mathcal{F}^{-1}_{\eta_{<2n}}[\frac{1}{(2\pi)^{1/2}}\int e^{i\xi(\sigma_{2n}-y)}f(y-\sigma_{2n})e^{iH^n_ny}\overline{f( y)}\left(\prod^{n-1}_{i=1}\hat{f}(H^n_{i+1}-\eta_{2i})\overline{\hat{f}(H^n_{i}-\eta_{2i})}\right)\hat{f}(H^n_1)dy](\sigma_{<2n})\\
    &=\mathcal{F}^{-1}_{\eta_{2n-1}}[\frac{1}{(2\pi)^{1/2}}\int e^{i\xi\sigma_{2n}}f(y-\sigma_{2n})\overline{f( y)}e^{-i\eta_{2n-1}y}\mathcal{F}^{-1}_{\eta_{<2n-1}}(\left(\prod^{n-1}_{i=1}\hat{f}(H^n_{i+1}-\eta_{2i})\overline{\hat{f}(H^n_{i}-\eta_{2i})}\right)\hat{f}(H^n_1))dy](\sigma_{<2n})\\
     &=\frac{1}{(2\pi)^{1/2}}\int e^{i\xi\sigma_{2n}}f(y-\sigma_{2n})\overline{f( y)}\mathcal{F}^{-1}_{\eta_{2n-1}}[e^{-i\eta_{2n-1}y}\mathcal{G}^{n-1}(\xi-\eta_{2n-1},\sigma_{<2n-1})](\sigma_{2n-1})dy\\
      &=\frac{1}{(2\pi)^{1/2}}\int e^{i\xi\sigma_{2n}}f(y-\sigma_{2n})\overline{f( y)}e^{i\xi(\sigma_{2n-1}-y)}\mathcal{F}^{-1}_{\xi}[\mathcal{G}^{n-1}](y-\sigma_{2N-1},\sigma_{<2N-1})dy\\
       &=\frac{e^{i\xi\sum^{2n}_{j=1}\sigma_{j}}}{(2\pi)^{1/2}}\int e^{-i\xi x}f(x-\sigma_{2n}-\Lambda^{(n-1)})\overline{f( x-\Lambda^{(n-1)}})\mathcal{F}^{-1}_{\xi}(\mathcal{G}^{n-1})(x-\Lambda^{(n-1)}-\sigma_{2n-1},\sigma_{<2n-1})dx
\end{align*}
for $\Lambda^{(n-1)}=\sum^{2(n-1)}_{j=1}\sigma_{j}$. Finally, we get 
\begin{align*}
    \mathcal{G}^n(\sigma)&=\frac{1}{(2\pi)^{1/2}}e^{i\xi\sum^{2n}_{j=1}\sigma_{j}}\int e^{-i\xi x}f(x-\sigma_{2n}-\Lambda^{(n-1)})\overline{f( x-\Lambda^{(n-1)}})\mathcal{M}^{n-1}(x-\sigma_{2n-1},\sigma_{<2n-1})dx\\
    &=\frac{1}{(2\pi)^{1/2}}e^{i\xi\sum^{2n}_{j=1}\sigma_{j}}\int e^{-i\xi x}\mathcal{M}^{n}(x,\sigma)dx
\end{align*}
which is the desired result. 
\end{proof}
\begin{lem}
\label{lem:Rcalculus}
For $\mathcal{M}^n$ defined as in the previous lemma \ref{lem:Rtrans} and some $\alpha\geq0$, we have 
\begin{align}
\label{equation:Rcalculus}
\int\int (\sum_{j=1}^{2n}|\sigma_j|+|x|)^{\alpha}\mathcal{M}^n(x,\sigma)dxd\sigma\lesssim\int (\sum_{j=1}^{2n+1}|Y_{j}|)^{\alpha}\left(\prod^{n}_{j=1}|f(Y_{2j+1})||\overline{f(Y_{2j})}|\right)|f(Y_{1})|dY_1\dots dY_{2n+1}
\end{align}
\end{lem}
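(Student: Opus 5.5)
The plan is a linear change of variables in $(x,\sigma)\in\mathbb{R}^{2n+1}$, replacing $(x,\sigma_1,\dots,\sigma_{2n})$ by the arguments of the $2n+1$ copies of $f$ in $\mathcal{M}^n$. I read the left-hand side of \eqref{equation:Rcalculus} with $|\mathcal{M}^n|$ in place of $\mathcal{M}^n$; this is the only meaningful reading, and it is the one needed later for the error bounds. We first bound $|\mathcal{M}^n|$ by the product of the moduli of its factors. Then we set
\begin{align*}
Y_{2j+1}&=x-\sigma_{2j}-\sum^{2(j-1)}_{k=1}\sigma_k-\sum^{n-1}_{k=j}\sigma_{2k+1},\qquad
Y_{2j}=x-\sum^{2(j-1)}_{k=1}\sigma_k-\sum^{n-1}_{k=j}\sigma_{2k+1},\qquad j=1,\dots,n,\\
Y_1&=x-\sum^{n-1}_{k=0}\sigma_{2k+1}.
\end{align*}
With these definitions, $|\mathcal{M}^n(x,\sigma)|=\left(\prod_{j=1}^n|f(Y_{2j+1})||\overline{f(Y_{2j})}|\right)|f(Y_1)|$ exactly.

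Next I show that the linear map $T:(x,\sigma)\mapsto Y$ is invertible, has integer coefficients, and has an inverse with integer coefficients. This is done by recovering the old variables explicitly:
\begin{itemize}
\item $Y_{2j+1}-Y_{2j}=-\sigma_{2j}$ gives every even-indexed $\sigma_{2j}$;
\item $Y_1-Y_2=-\sigma_1$ gives $\sigma_1$;
\item $Y_{2j}-Y_{2j+2}=\sigma_{2j-1}+\sigma_{2j}-\sigma_{2j+1}$ for $j=1,\dots,n-1$ gives $\sigma_3,\sigma_5,\dots,\sigma_{2n-1}$ recursively;
\item finally $x=Y_2+\sum_{k=1}^{n-1}\sigma_{2k+1}$.
\end{itemize}
A matrix with integer entries whose inverse also has integer entries has determinant $\pm1$. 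Hence $dx\,d\sigma=dY$ under this change of variables.

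The weight is handled by the same inversion. Each of $x,\sigma_1,\dots,\sigma_{2n}$ is a linear combination of $Y_1,\dots,Y_{2n+1}$ with integer coefficients bounded by a constant depending only on $n\le d$. Therefore $\sum_{j=1}^{2n}|\sigma_j|+|x|\lesssim\sum_{j=1}^{2n+1}|Y_j|$, and raising this to the power $\alpha\ge0$ keeps the inequality with a constant depending only on $d$ and $\alpha$. Substituting into the integral gives \eqref{equation:Rcalculus}.

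The only real obstacle is the index bookkeeping: checking that the $2n+1$ arguments are linearly independent and that the inversion above is consistent with the nested sums in the definition of $\mathcal{M}^n$. I would first verify it for $n=1$, where $Y_1=x-\sigma_1$, $Y_2=x$, $Y_3=x-\sigma_2$, and then for $n=2$, before writing the general recursion.
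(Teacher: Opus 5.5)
Your proof is correct and uses the same mechanism as the paper. You change variables from $(x,\sigma)$ to the $2n+1$ arguments of the copies of $f$ with unit Jacobian, and you control the weight because $x,\sigma_1,\dots,\sigma_{2n}$ are bounded linear combinations of the new variables. The difference is organizational: the paper substitutes inductively, peeling off $\sigma_{2n},\sigma_{2n-1}$ and applying the induction hypothesis to $\mathcal{M}^{n-1}$, whereas you do it in one step with an explicit integer inverse (hence $|\det|=1$), which also makes the case $\alpha>0$, only sketched in the paper, fully explicit.
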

\begin{proof}
We show the result for $\alpha=0$ by induction for $n\geq 1$. Up to replacing the equality sign with an inequality sign, cases with $\alpha>0$ can be studied in the same way because all the changes of variable operated in the proof are linear combinations. For $n=1$, we have 
\begin{align*}
\int\mathcal{M}^1(x,\sigma)dxd\sigma&=\int\int\int f(x-\sigma_2)\overline{f(x)}f(x-\sigma_1)dxd\sigma_1d\sigma_2\\
&=\int\int\int f(Y_3)\overline{f(Y_2)}f(Y_1)dY_3dY_2dY_1
\end{align*}
Then, for $n>1$, we assume that the claim holds for $n-1$. We use the notation
\begin{align*}
   \mathcal{M}^n(x,\sigma)=\prod^{n}_{j=1}f(X_{2j+1}(x,\sigma))\overline{f(X_{2j}(x,\sigma))}f(X_{1}(x,\sigma)),
\end{align*}
where, in the odd case,
\begin{align*}
X_{2j+1}(x,\sigma)=x-\sigma_{2j}-\sum^{2(j-1)}_{k=1}\sigma_k-\sum^{n-1}_{k=j}\sigma_{2k+1},
  \end{align*}
  with  $\sigma_{0}=0$ by convention, and in the even case,
  \begin{align*}
X_{2j}(x,\sigma)=x-\sum^{2(j-1)}_{k=1}\sigma_k-\sum^{n-1}_{k=j}\sigma_{2k+1}.
  \end{align*}
Then, observing that only $X_{2n+1}(x,\sigma)$ depends on $\sigma_{2n+1}$ yields
\begin{align*}
\int\int \mathcal{M}^n(x,\sigma)dxd\sigma&=\int\int\left(\prod^{n}_{j=1}f(X_{2j+1}(x,\sigma))\overline{f(X_{2j}(x,\sigma))}\right)f(X_{1}(x,\sigma))dxd\sigma\\
&=\int\int\int f(Y_{2n+1})\overline{f(X_{2n}(x,\sigma))}\mathcal{M}^{n-1}(x-\sigma_{2n-1},\sigma_{<2n-1})dxd\sigma_1\dots d\sigma_{2n-1}dY_{2n+1},\\
&=\int\int\int f(Y_{2n+1})\overline{f(X_{2n}(y+\sigma_{2n-1},\sigma))}\mathcal{M}^{n-1}(y,\sigma_{<2n-1})dyd\sigma_1\dots d\sigma_{2n-1}dY_{2n+1},\\
&=\int\int\int\int f(Y_{2n+1})\overline{f(Y_{2n})}\mathcal{M}^{n-1}(y,\sigma_{<2n-1})dyd\sigma_1\dots d\sigma_{2n-2}dY_{2n}dY_{2n+1},\\
&=\int \left(\prod^{n}_{j=1}f(Y_{2j+1})\overline{f(Y_{2j})}\right)f(Y_{1})dY_1\dots dY_{2n+1}
\end{align*}
using the induction hypotheses for $n-1$. 
\end{proof}
\begin{proof}[Proof of proposition \ref{propal:controlerror}]
For $r>0$, we have
\begin{align*}
    \partial^q_\xi \mathfrak{R}^n_r(s)&=\lambda_n\frac{1}{(2\pi)^n}\int\frac{1}{s^n}(e^{-i(\eta\cdot  (Q^n)^{-1}\eta)/(2s)}-\sum^{r-1}_{j=0}\frac{(-i\eta\cdot (Q^n)^{-1}\eta)^j}{(2s)^{j}j!}) \partial^q_\xi\mathcal{F}^{-1}_{\eta}[\mathcal{N}^n](s,\xi,\eta)d\eta\\
    &\lesssim \int\frac{1}{s^{n+r-\beta}}(\eta\cdot (Q^n)^{-1}\eta)^{r-\beta} |\partial^q_\xi\mathcal{F}^{-1}_{\eta}[\mathcal{N}^n](s,\xi,\eta)|d\eta\\
    &\lesssim\int\int\frac{1}{s^{n+r-\beta}}|\eta|^{2r-2\beta} (\sum_{j=1}^{2n}|\eta_j|+|x|)^{q}|\mathcal{M}^{n}(s,x,\eta)|dxd\eta\\
    &\lesssim\frac{1}{s^{n+r-\beta}}\int\int(\sum_{j=1}^{2n}|\eta_j|+|x|)^{2r+q-2\beta} |\mathcal{M}^{n}(s,x,\eta)|dxd\eta
\end{align*}
with $0<\beta<1$, and for $r=0$
\begin{align*}
    \partial^q_\xi \mathfrak{R}^n_0(s)&=\lambda_n\frac{1}{(2\pi)^n}\int\frac{1}{s^n}(e^{-i(\eta\cdot  (Q^n)^{-1}\eta)/(2s)}) \partial^q_\xi\mathcal{F}^{-1}_{\eta}[\mathcal{N}^n](s,\xi,\eta)d\eta\\
    &\lesssim \frac{1}{s^{n}} \int\int(\sum_{j=1}^{2n}|\eta_j|+|x|)^{q}|\mathcal{M}^{n}(s,x,\eta)|dxd\eta.
\end{align*} 
where we use lemma \ref{lem:Rtrans} to compute $\mathcal{F}^{-1}_{\eta}[\mathcal{N}^n](s,\xi,\eta)$ in both cases. Then, applying lemma \ref{lem:Rcalculus} leads to
\begin{align*}
    \partial^q_\xi \mathfrak{R}^n_r(s)\lesssim\frac{1}{s^{n+r-\beta}}\int\int(\sum_{j=1}^{2n+1}|Y_{j}|)^{2r+q-2\beta}\left(\prod^{n}_{j=1}|f(s,Y_{2j+1})||\overline{f(s,Y_{2j})}|\right)|f(s,Y_1)|dY_1\dots dY_{2n+1}
\end{align*}
and 
\begin{align*}
    \partial^q_\xi \mathfrak{R}^n_0(s)\lesssim\frac{1}{s^{n}} \int\int(\sum_{j=1}^{2n+1}|Y_{j}|)^{q}\left(\prod^{n}_{j=1}|f(s,Y_{2j+1})||\overline{f(s,Y_{2j})}|\right)|f(s,Y_1)|dY_1\dots dY_{2n+1}.
\end{align*}
This gives $2n+1$ integrals with weighted $f$'s as integrands that can be computed one by one. For $r>0$, we get 
\begin{align*}
    \partial^q_\xi \mathfrak{R}^n_r(s)
    &\lesssim \frac{1}{s^{n+r-\beta}}\sum_{\overrightarrow{m}\in\mathbb{N}^{2n+1}, |\overrightarrow{m}|=2r+q}\prod^{2n+1}_{j=1}||\langle x\rangle^{m_j\gamma} f(s)||_{L^1_x} \\
    &\lesssim \frac{1}{s^{n+r-\beta-(2n+1)\alpha}}\sum_{\overrightarrow{m}\in\mathbb{N}^{2n+1}, |\overrightarrow{m}|=2r+q}\prod^{2n+1}_{j=1}||s^{-\alpha}\langle x\rangle^{m_j\gamma+1/2+\varepsilon} f(s)||_{L^2_x} \\
    &\lesssim \frac{1}{s^{n+r-\beta-(2n+1)\alpha}}C^{2n+1}
\end{align*}
for $|\overrightarrow{m}|=\sum_{j=1}^{2n+1}|\overrightarrow{m}_j|$ and $\beta>3/4$, and where $0<\gamma<1$ is such that $2r+q-2\beta=(2r+q)\gamma$ and $\varepsilon>0$ such that $2\beta-3/2>\varepsilon$. With a simpler reasoning, we have 
\begin{align*}
    \partial^q_\xi \mathfrak{R}^n_0(s)
    &\lesssim \frac{1}{s^{n-(2n+1)\alpha}}\sum_{\overrightarrow{m}\in\mathbb{N}^{2n+1}, |\overrightarrow{m}|=q}\prod^{2n+1}_{j=1}||s^{-\alpha}\langle x\rangle^{m_j+1/2+\varepsilon} f(s)||_{L^2_x} \\
    &\lesssim \frac{1}{s^{n-(2n+1)\alpha}}C^{2n+1}
\end{align*}
for $1/2>\varepsilon>0$. Thus, if $\alpha<1/(16d+8)$ and $\beta>3/4$ is small enough, we obtain for some $1>\delta>0$
\begin{align*}
    \partial^q_\xi \mathfrak{R}^n_r(s)\lesssim s^{-n-r+1-\delta}C^{2n+1}
\end{align*}
for $s\geq 1$.
Then, bounds on lower derivatives can be obtain the same way:
\begin{align}
|| \mathfrak{R}^n_r(s)||_{W^{q,+\infty}_x}\lesssim s^{-n-r+1-\delta}C^{2n+1}.
\end{align}
This is the first statement of the proposition. Now, knowing that 
\begin{align*}
    \Theta(s,\xi)=e^{i\int^s_1\frac{|\hat{f}(\tau,\xi)|^2}{\tau}d\tau}
\end{align*}
and that $|\hat{f}(\tau,\xi)|=|\hat{w}(\tau,\xi)|$, we get that, for $k\leq q$,
\begin{align*}
    ||\partial^k_\xi \Theta(s)||_{L^\infty_x}&\lesssim \sum^{k}_{j=1}\ln(s)^j||\hat{w}||^{2j}_{L^\infty_t W^{k,\infty}_x}\\
   & \lesssim \sum^{k}_{j=1}\ln(s)^jC^{2j}.
\end{align*}
Finally, up to a change of $1>\delta>0$, 
\begin{align}
||\Theta\mathfrak{R}^n_r(s)||_{W^{q,+\infty}_x}\lesssim s^{-n-r+1-\delta}h(C)C^{2n+1}
\end{align}
for $s\geq 1$ and some polynomial $h$. 
\end{proof}

\section{Local existence result}
\label{section:localexist}
In this section, we give a local existence result for \eqref{eq:NLS} in the proper norm. \\
\begin{unTheorem}
\label{unTheorem:localexist}
Let $\varepsilon>0$ be sufficiently small and let $(\alpha_j)_{0\leq j\leq 2N+1}$ be an increasing positive sequence such that $0<6\alpha_{j-1}\leq\alpha_j$ and $\alpha_{2n+1}<1/(16d+8)$. Then, there exists a finite $T>0$, such that for any $T_\star\geq 1$ and for any initial data $u_\star$ such that $||u_\star||_{H^{1,0}_x}+\sum^{2N+1}_{j=0}||T_\star^{-\alpha_j}e^{-iT_\star\Delta/2}u_\star||_{H^{0,j}_x}<\varepsilon$ there exists a unique local solution $u\in C(I,H^{1,0}_x)$ to \eqref{eq:NLS}, for $I=[T_\star,T_\star,+T]$, satisfying $u|_{t=T_\star}=u_\star$. Moreover, the solution satisfies 
\begin{align}
\label{eq:initialboundbootstrap}
&||u||_{C^0(I,H^{1,0}_x)}+\sum^{2N+1}_{j=0}||t^{-\alpha_j}f||_{C^0(I,H^{0,j}_x)}<C_X(\varepsilon)
\end{align}
where $f(t,x)=e^{-it\Delta/2}u(t,x)$ and $\lim_{\sigma\to0}C_X(\sigma)=0$.
\end{unTheorem}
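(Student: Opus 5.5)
I will construct the solution by a contraction argument for the Duhamel formulation on a short interval $I=[T_\star,T_\star+T]$, in the space
\begin{align*}
X_T:=\Big\{u\in C(I,H^{1,0}_x)\;:\;\|u\|_{X_T}:=\|u\|_{C^0(I,H^{1,0}_x)}+\sum_{j=0}^{2N+1}\|t^{-\alpha_j}e^{-it\Delta/2}u\|_{C^0(I,H^{0,j}_x)}<\infty\Big\},
\end{align*}
and the map
\begin{align*}
\Phi(u)(t)=e^{i(t-T_\star)\Delta/2}u_\star-i\int_{T_\star}^te^{i(t-s)\Delta/2}\sum_{n=1}^d\lambda_n|u|^{2n}u(s)\,ds .
\end{align*}
The first point is the $H^1$ part. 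Since $H^1_x$ is an algebra in one dimension and $e^{it\Delta/2}$ is unitary on $H^1_x$, we get $\|\Phi(u)\|_{C^0H^1}\le\|u_\star\|_{H^1}+T\sum_n|\lambda_n|\|u\|^{2n+1}_{C^0H^1}$. The difference estimate is analogous, with $|u|^{2n}u-|v|^{2n}v$ bounded in $H^1$ by $(\|u\|_{H^1}+\|v\|_{H^1})^{2n}\|u-v\|_{H^1}$.

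The second point is the weighted norms of the profile. Writing $f=e^{-it\Delta/2}\Phi(u)$, we have
\begin{align*}
f(t)=e^{-iT_\star\Delta/2}u_\star-i\int_{T_\star}^te^{-is\Delta/2}\mathcal{N}(u)(s)\,ds .
\end{align*}
The weight $x^j$ acting on $e^{-is\Delta/2}g$ equals $e^{-is\Delta/2}(x-is\partial_x)^jg$, i.e. $J(s)^j$ with $J=x+is\partial_x$ (up to sign conventions). Rather than work with high powers of $J$ on the nonlinearity, which would demand more regularity of $u$ than $H^1$, I will use the Fourier side as in Remark \ref{rem:noxiinphase} and Lemma \ref{lem:phasenoxi}. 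The quantity $\|x^jf\|_{L^2}=\|\partial_\xi^j\hat f\|_{L^2}$ is controlled through the Duhamel formula in the $\Psi^n$ form, where the phase does not depend on $\xi$. Then $\partial_\xi^j\hat{\mathfrak P}^n$ is a sum of multilinear expressions in $\partial_\xi^{j_1}\hat f,\dots,\partial_\xi^{j_{2n+1}}\hat f$ with $\sum j_i=j$. Transferring to physical space via Lemma \ref{lem:Rtrans}, each such term is a product $\prod_i e^{\pm is\Delta/2}(x^{j_i}f)$ pulled back by $e^{-is\Delta/2}$. I then put the factor carrying the highest weight in $L^2$ and the others in $L^\infty$. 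The $L^\infty$ factors are handled by $\|e^{is\Delta/2}(x^{j_i}f)\|_{L^\infty}\lesssim\|x^{j_i}f\|_{L^2}^{1/2}\|\partial_x e^{is\Delta/2}x^{j_i}f\|_{L^2}^{1/2}$ or by Sobolev in terms of $\langle x\rangle^{j_i+1}f$.

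The main obstacle is precisely this step: closing the $H^{0,j}$ estimate without losing weights or derivatives. I would try first to estimate low-weight factors with $j_i<j$ in $L^\infty_x$ through $\|\widehat{x^{j_i}f}\|_{L^1_\xi}\lesssim\|x^{j_i}f\|_{H^{0,1}}$-type bounds. Here I use $\langle\xi\rangle$ derivatives and $H^1$ control, noting $\|\xi\hat f\|_{L^2}=\|\partial_xu\|_{L^2}$, and interpolate via Lemma \ref{lem:interpol} between $H^{0,j_i}$ and $H^{0,j}$. This costs powers of $s^{\alpha_{j}}$. The hypothesis $6\alpha_{j-1}\le\alpha_j$ and $\alpha_{2N+1}<1/(16d+8)$ is what should absorb the resulting growth of the lower-order factors, which is why the norms carry the time weights $t^{-\alpha_j}$. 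On a finite interval of length $T$ the time integral simply contributes a factor $T$ times a power of $T_\star+T$, and since $t^{-\alpha_j}$ is comparable to $T_\star^{-\alpha_j}$ on $I$ (ratio bounded for $T\le1$), the bound becomes
\begin{align*}
\|\Phi(u)\|_{X_T}\le C\varepsilon+CT\,\mathrm{poly}(\|u\|_{X_T}),
\end{align*}
uniformly in $T_\star\ge1$, with the analogous Lipschitz bound. A delicate point is the uniformity in $T_\star$ of the $L^\infty$ dispersive bounds; there I only use Sobolev/weight bounds, not decay in $s$, to avoid dependence on $T_\star$.

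Choosing $T$ small, depending only on $\varepsilon$, gives a contraction on the ball of radius $2C\varepsilon$, whence existence, uniqueness in $C(I,H^1)$ (uniqueness in $H^1$ alone follows from the $H^1$ Lipschitz bound and Gronwall), continuity in time, and \eqref{eq:initialboundbootstrap} with $C_X(\varepsilon)=2C\varepsilon\to0$.
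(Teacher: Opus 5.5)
Your overall scheme is fine, but the step you flag as the main obstacle does not close as proposed, and the cause is your decision to avoid decay in $s$.

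\textbf{Where it fails.} Both devices you suggest for the $L^\infty$ factors $e^{is\Delta/2}(x^{j_i}f)$ need mixed weight--derivative norms that your space does not contain.
\begin{itemize}
\item Gagliardo--Nirenberg needs $\|\partial_x e^{is\Delta/2}(x^{j_i}f)\|_{L^2_x}=\|\xi\,\partial_\xi^{j_i}\hat f\|_{L^2_\xi}$.
\item A bound on $\|\widehat{x^{j_i}f}\|_{L^1_\xi}=\|\partial_\xi^{j_i}\hat f\|_{L^1_\xi}$ needs $\langle\xi\rangle^{1/2+}\partial_\xi^{j_i}\hat f\in L^2_\xi$. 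Note that $H^{0,1}_x$ controls $\|g\|_{L^1_x}$, not $\|\hat g\|_{L^1_\xi}$.
\end{itemize}
Your norm only controls $\|\xi\hat f\|_{L^2_\xi}$ and $\|\partial_\xi^k\hat f\|_{L^2_\xi}$ separately, and Lemma \ref{lem:interpol} only interpolates pure weights. The fractional mixed norms you can get by interpolating $H^{1,0}_x$ with $H^{0,J}_x$ require $J>2j_i$ and cost a time factor $s^{(j_i/J)\alpha_J}$.

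With no decay to absorb these factors, and since $t^{-\alpha_j}\simeq T_\star^{-\alpha_j}$ on $I$, the Duhamel increment of the $j$-th norm is only bounded by $T\,T_\star^{\sigma-\alpha_j}\,\mathrm{poly}(\|u\|_{X_T})$, where $\sigma$ is the total time cost of the factors. This is bounded uniformly in $T_\star$ only if $\sigma\le\alpha_j$. The hierarchy $\alpha_k\ge 6\alpha_{k-1}$ works against you here. For $j=2$ and $\overrightarrow{m}=(1,1,0)$, the $L^2$ factor costs $\alpha_1$, and the weight-one $L^\infty$ factor costs at least $\alpha_3/3\ge 2\alpha_2>\alpha_2$.

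So the bound $\|\Phi(u)\|_{X_T}\le C\varepsilon+CT\,\mathrm{poly}(\|u\|_{X_T})$ uniformly in $T_\star\ge1$ is not established. That uniformity, of both $T$ and $C_X$, is exactly what the statement asserts and what the bootstrap relies on.

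\textbf{The fix.} The missing ingredient is the dispersive bound you discard:
$\|e^{is\Delta/2}g\|_{L^\infty_x}\lesssim s^{-1/2}\|g\|_{L^1_x}\lesssim s^{-1/2}\|\langle x\rangle^{1/2+\delta}g\|_{L^2_x}$.
Since $s\ge T_\star\ge1$, this introduces no $T_\star$-dependence. On the contrary, the $2n$ factors placed in $L^\infty$ produce $s^{-n}$. Only pure weighted norms $\|\langle x\rangle^{k}f(s)\|_{L^2_x}$ then appear, with total growth at most $s^{(2n+1)\alpha_{2N+1}}$. Because $\alpha_{2N+1}<1/(16d+8)$, the factor $s^{-n}$ beats this growth, and the increment is $O(T)$ uniformly in $T_\star$.

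This is exactly the estimate in the paper's Lemma \ref{lem:preserve}. There the product $\prod e^{\pm is\Delta/2}(x^{m_k}f)$ comes from undoing the change of variables of Lemma \ref{lem:phasenoxi}, returning to the phase $\Phi^n$, and applying Plancherel; Lemma \ref{lem:Rtrans} is not the relevant tool for this step.

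\textbf{Comparison with the paper.} With this estimate in place your route works and is a legitimate, somewhat more direct alternative. You contract in the full weighted norm, with the Lipschitz bound following from the multilinear structure of $|u|^{2n}u$. The paper instead contracts only in $L^\infty_tH^1_x$ (Lemma \ref{lem:cazenave}), shows that the iteration preserves the weighted ball, and passes the weighted bounds to the limit by a weak-$\star$ compactness argument.
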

For the purpose of our asymptotic analysis, it is crucial that the profile of our solution, $f(t)=e^{-it\Delta/2}u(t)$, remains bounded in weighted Sobolev norms, as in \eqref{eq:initialboundbootstrap}. A natural approach would be to rely on results of the type of \cite{zbMATH04115164}, to propagate bounds on weighted norms on $u$, and then use the fact that
\begin{align}
e^{it\Delta/2}xe^{-it\Delta/2}=x-it\partial_x
\end{align}
that yields
\begin{align}
\label{eq:commutJ}
[e^{it\Delta/2},x]=-it\partial_xe^{it\Delta/2},
\end{align}
to convert weights on $u$ into weights on the profile $f$. More precisely, the commutation relation \eqref{eq:commutJ} implies that, for any integer $m$, the quantity $x^mf(t,x)$ can be expressed as a linear combination of $e^{-it\Delta/2}x^{m-l}\partial^l_xu$, for $l\leq m$, together with lower–order derivative/weight contributions, using a non commutative binomial formula\footnote{We can obtain it from the Hausdorff-Campbell-Baker formula or by using \cite{arXiv:1707.03861}. In particular, the algebra here corresponds to the Heisenberg algebra.}, that is, 
\begin{align}
\label{eq:noncommut}
\nonumber x^mf(t,x)&=e^{-it\Delta/2}(-it\partial_x+x)^mu(t,x)\\
&=e^{-it\Delta/2}\sum_{j+k+2l=m}\frac{n!}{j!k!l!}(-\frac{it}{2})^lx^j(-it\partial_x)^ku.
\end{align}
However, this strategy requires that $u$, and so its initial data $u_\star$, belongs to $H^{2N+1,0}_x$. This regularity is mandatory to propagate the weighted norms of $u$, as explained in \cite{zbMATH04115164}, and subsequently to control the weighted norms of $f$ via the formula \ref{eq:noncommut}. To avoid the requirement of more regular initial data than $H^1_x$, we directly propagate weights on the profile $f$. \\\\
The idea here is to modify the standard fixed point argument to include the preservation of the proper weighted norms in each iteration, and then pass this property to the limit. We first recall that a standard result from \cite{zbMATH02001571}. \\
\begin{lem}[Lemma from \cite{zbMATH02001571}]
\label{lem:cazenave}
Let $E=\{u\in L^\infty_t(I,H^1_x):||u||_{L^\infty_t(I,H^1_x)}\leq M\}$. Then, the map 
\begin{align}
\label{eq:defmap}
\mathcal{H}(u)(t)=e^{i(t-T_\star)\Delta/2}u_\star-i\int^t_{T_\star}e^{i(t-s)\Delta/2}\left(\sum_{n=1}^{d}\lambda_n|u|^{2n}u\right)ds
\end{align}
is a contracting map from $E$ to $E$ for $T>0$, $M>0$ and $\varepsilon>0$ sufficiently small. 
\end{lem}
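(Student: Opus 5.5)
The plan is the standard fixed-point argument in $L^\infty(I,H^1_x)$, which rests on three facts: $e^{it\Delta/2}$ is unitary on $H^1_x$, $H^1_x(\mathbb{R})$ is an algebra, and $H^1_x\hookrightarrow L^\infty_x$ in one dimension.

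\textbf{Self-mapping.} Take $u\in E$. Since $e^{i(t-T_\star)\Delta/2}$ is unitary on $H^1_x$, the free part satisfies
\begin{align*}
\|e^{i(t-T_\star)\Delta/2}u_\star\|_{H^1_x}=\|u_\star\|_{H^1_x}<\varepsilon .
\end{align*}
For the Duhamel term we use the algebra property, in the form $\||u|^{2n}u\|_{H^1_x}\lesssim \|u\|_{L^\infty_x}^{2n}\|u\|_{H^1_x}$, together with $\|u\|_{L^\infty_x}\lesssim\|u\|_{H^1_x}$. This gives
\begin{align*}
\Big\|\int^t_{T_\star}e^{i(t-s)\Delta/2}\sum_{n=1}^d\lambda_n|u|^{2n}u\,ds\Big\|_{H^1_x}\lesssim T\sum_{n=1}^d|\lambda_n|M^{2n+1}.
\end{align*}
Choose $M=2\varepsilon$. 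For $\varepsilon\le 1$ and $T$ small, say $T\lesssim \min(1,M^{-2d})$, the total is at most $\varepsilon+CTM\le M$. So $\mathcal{H}$ maps $E$ into $E$.

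\textbf{Contraction.} The difference of nonlinearities factors as
\begin{align*}
|u|^{2n}u-|v|^{2n}v=\sum(\text{products of }2n\text{ factors among }u,\bar u,v,\bar v)\cdot(u-v)\ \text{or}\ \overline{(u-v)} .
\end{align*}
Estimating each product by the algebra property and Sobolev embedding gives
\begin{align*}
\|\mathcal{H}(u)-\mathcal{H}(v)\|_{L^\infty(I,H^1_x)}\lesssim T\sum_{n=1}^d|\lambda_n|M^{2n}\|u-v\|_{L^\infty(I,H^1_x)} .
\end{align*}
Shrinking $T$ so that the constant is at most $1/2$ makes $\mathcal{H}$ a contraction.

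\textbf{Completeness of $E$ and continuity in time.} $E$ is a closed ball in the Banach space $L^\infty(I,H^1_x)$, so it is complete. The image $\mathcal{H}(E)$ consists of functions in $C(I,H^1_x)$, by strong continuity of the group and continuity of the Bochner integral. Hence the fixed point also lies in $C(I,H^1_x)$.

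\textbf{Main obstacle.} The only point needing care is that $T$ must depend only on $\varepsilon$, $M$, $d$ and the $\lambda_n$, not on $T_\star$. This holds because every estimate uses unitarity and is translation invariant in time.
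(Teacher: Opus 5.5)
Your proof is correct and is the standard $L^\infty(I,H^1_x)$ fixed-point argument: unitarity of $e^{it\Delta/2}$ on $H^1_x$, the one-dimensional embedding $H^1_x\hookrightarrow L^\infty_x$, and a difference estimate for the polynomial nonlinearity. That is the argument of the reference the paper cites, and the paper gives no proof of its own beyond that citation. Smallness of $\varepsilon$ is not actually needed here: taking $M=2\|u_\star\|_{H^1_x}$ and $T$ small depending only on $M$, $d$ and the $\lambda_n$ works for arbitrary $H^1_x$ data, uniformly in $T_\star$.
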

Now, we show that this map preserves the weighted norm.\\
\begin{lem}
\label{lem:preserve}
Define the norm $||u||_A=\sum^{2N+1}_{j=0}||t^{-\alpha_j}e^{-it\Delta/2}u||_{L^\infty_t(I,H^{0,j}_x)}$. Let $E_A=E\cap\{u\in L^\infty_t(I,H^1_x):||u||_A\leq M\}$. Then, for $T>0$, $M>0$ and $\varepsilon>0$ sufficiently small, the map 
$\mathcal{H}$ preserves $E_A$. 
\end{lem}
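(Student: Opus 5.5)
Given $u\in E_A$, we set $f=e^{-it\Delta/2}u$ and write the profile of $\mathcal{H}(u)$ as
\begin{align*}
g(t)=e^{-iT_\star\Delta/2}u_\star-i\int^t_{T_\star}e^{-is\Delta/2}\Big(\sum_{n=1}^d\lambda_n|u|^{2n}u\Big)(s)\,ds .
\end{align*}
The aim is $\sum_j\|t^{-\alpha_j}g\|_{L^\infty_t(I,H^{0,j}_x)}\leq M$. The data term contributes at most $\varepsilon$, since $t^{-\alpha_j}\leq T_\star^{-\alpha_j}$ on $I$. It therefore suffices to show that the Duhamel term has $H^{0,j}_x$ norm $\lesssim T\,T_\star^{\alpha_j}\,p(M)$ for some polynomial $p$ vanishing to order at least three. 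Then $\varepsilon+T\,p(M)\leq M$ once $M$ is fixed (already fixed small by Lemma \ref{lem:cazenave}), $\varepsilon\leq M/2$, and $T$ is small. All constants must be independent of $T_\star$.

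I would work on the Fourier side, where $\|\langle x\rangle^j g\|_{L^2}\sim\sum_{k\leq j}\|\partial_\xi^k\hat g\|_{L^2_\xi}$. By Lemma \ref{lem:phasenoxi}, $\partial_s\hat g=-i\sum_n\lambda_n\hat{\mathfrak{P}}^n(s,\xi)$, and the phase $\Psi^n$ does not depend on $\xi$ (Remark \ref{rem:noxiinphase}). Hence $\partial^k_\xi$ falls only on the $2n+1$ factors $\hat f(s,H^n_i-\eta_{2i})$ and so on, distributed by the Leibniz rule, and no powers of $s$ are produced. Undoing the Fourier transform in each term, $\partial^k_\xi\hat{\mathfrak{P}}^n$ is the Fourier transform, evaluated at time $s$, of a sum of products
\begin{align*}
e^{-is\Delta/2}\Big(\prod_{i}e^{is\Delta/2}(x^{k_i}f)\,\overline{e^{is\Delta/2}(x^{l_i}f)}\cdots\Big),\qquad \sum k_i+\sum l_i=k .
\end{align*}
Equivalently, we insert $x^{k_i}f=e^{-is\Delta/2}J^{k_i}u$ with $J=x+is\partial_x$, which is the same Leibniz structure.

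I then estimate each product in $L^2_x$ by putting the factor carrying the most weights in $L^2$ and the other $2n$ factors in $L^\infty_x$. For $L^\infty$ control of $e^{is\Delta/2}(x^{m}f)$ I use the 1D Sobolev/Gagliardo–Nirenberg inequality $\|v\|_{L^\infty}\lesssim\|v\|_{L^2}^{1/2}\|\partial_xv\|_{L^2}^{1/2}$. When $m=0$ this is just $\|u\|_{H^1}\leq M$. When $m\geq1$, the derivative $\partial_x e^{is\Delta/2}(x^mf)$ is not controlled by $H^1$ alone. Here I would use the alternative bound $\|e^{is\Delta/2}h\|_{L^\infty}\lesssim\|\hat h\|_{L^1}\lesssim\|h\|_{H^{0,1}}^{1/2}\cdots$, or more simply interpolate with Lemma \ref{lem:interpol}. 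Since only $L^1_\xi$ of $\widehat{x^mf}$ is needed, $\|\partial^m_\xi\hat f\|_{L^1_\xi}\lesssim\|\langle\xi\rangle\partial_\xi^m\hat f\|_{L^2}$. The $\langle\xi\rangle$ weight is the $H^1$ regularity of $x^mf$, which is not available. So instead I allocate weights so that at most one factor carries a high $m$: any factor with $m\leq j-1$ weights is placed in $L^\infty$ through $\|\widehat{x^mf}\|_{L^1}\lesssim\|\langle x\rangle^{m+1}f\|_{L^2}^{1/2}\|\langle x\rangle^m\partial_x\ldots\|$. This is the delicate point.

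\textbf{Main obstacle.} The main obstacle is closing these $L^\infty$ bounds for weighted factors using only $H^1$ regularity of $u$ together with weights on $f$. The gain comes from the growth conditions $6\alpha_{j-1}\leq\alpha_j$. I plan to bound each intermediate weighted factor by $T_\star^{\alpha_{m}}$-type powers and to interpolate, via Lemma \ref{lem:interpol}, between weight $0$ (controlled by $\|u\|_{H^1}$) and weight $j$ (controlled by $M t^{\alpha_j}$), so that the total power of $t$ in a product is at most $\alpha_j$. The ratio condition $\alpha_j\geq6\alpha_{j-1}$ leaves enough room to absorb the extra half-weight from the $L^1_\xi$ embedding and the multiplicity $2n+1\leq2d+1$. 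Integrating over $[T_\star,T_\star+T]$ then gives the factor $T$, and since $t\geq T_\star$ the bound takes the form $t^{\alpha_j}T\,p(M)$, which closes the argument.
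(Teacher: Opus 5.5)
There is a genuine gap. Your architecture matches the paper's: work on the Fourier side, use Lemma \ref{lem:phasenoxi} so that $\partial_\xi^j$ only hits the $2n+1$ factors, undo the change of variables, put the factor with the most weights in $L^2_x$ and the other $2n$ factors in $L^\infty_x$, then integrate over an interval of length $T$. But the step you flag as the main obstacle is left open, and the tools you propose for it cannot work.

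Gagliardo--Nirenberg applied to $e^{is\Delta/2}(x^mf)$ needs regularity of $x^mf$, and so does the bound $\|e^{is\Delta/2}h\|_{L^\infty}\leq\|\hat h\|_{L^1_\xi}$ with $h=x^mf$. That is, both require control of $\langle x\rangle^m\partial_x f$, which neither $E$ nor the $A$-norm provides. Lemma \ref{lem:interpol} only interpolates weighted $L^2$ norms. So no choice of interpolation exponents, and no use of the hierarchy $6\alpha_{j-1}\leq\alpha_j$, turns it into an $L^\infty$ bound. As written, the product estimate is not established.

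The missing ingredient is the dispersive estimate $\|e^{is\Delta/2}h\|_{L^\infty_x}\lesssim s^{-1/2}\|h\|_{L^1_x}$, which needs no regularity. Combine it with Cauchy--Schwarz, $\|x^mf\|_{L^1_x}\lesssim\|\langle x\rangle^{m+1/2+\epsilon}f\|_{L^2_x}$. For $j\geq1$ the non-maximal factors carry at most $\lfloor j/2\rfloor\leq j-1$ weights, so $m+1/2+\epsilon<j$. For $j=0$ you can simply use $\|u\|_{L^\infty}\lesssim\|u\|_{H^1}$. Exactly as in the paper, this gives
\[
\|\int e^{is\Phi^n}\mathcal{X}^n_{\overrightarrow{m}}\,d\eta\|_{L^2_\xi}\lesssim s^{-n}\|\langle x\rangle^{j}f(s)\|_{L^2_x}^{2n+1}\lesssim s^{-n+(2n+1)\alpha_j}M^{2n+1}\leq s^{\alpha_j}M^{2n+1},
\]
since $s\geq1$ and $2n\alpha_j\leq n$. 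Integrating over $[T_\star,t]$ with $t\leq T_\star+T$ gives $T\,t^{\alpha_j}M^{2n+1}$. Hence $t^{-\alpha_j}\|x^j\mathcal{I}(u)(t)\|_{L^2}\leq\varepsilon+CTM^3$ (for $M\leq1$), uniformly in $T_\star$.

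It is the factor $s^{-n}$, not the hierarchy of the $\alpha_j$'s, that absorbs the growth of the $2n+1$ weighted norms. The hierarchy is only needed in the bootstrap (Lemma \ref{lem:bootstrapflem}), where the time interval is long and $n=1$ is critical.

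Your own vector field gives an alternative fix. Since $J=x+is\partial_x=e^{ix^2/2s}(is\partial_x)e^{-ix^2/2s}$, applying Gagliardo--Nirenberg to $e^{-ix^2/2s}J^mu$ yields $\|e^{is\Delta/2}(x^mf)\|_{L^\infty}\lesssim s^{-1/2}\|x^mf\|_{L^2}^{1/2}\|x^{m+1}f\|_{L^2}^{1/2}$. This uses only weights on $f$ and would close the argument in the same way.
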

\begin{proof}
By \eqref{eq:defmap} and Lemma \ref{lem:phasenoxi}, the Fourier transform of the profile $\mathcal{I}(u)=e^{-it\Delta/2}\mathcal{H}(u)$ solves 
\begin{align*}
\widehat{\mathcal{I}(u)}(t,\xi)=\hat{f_\star}(\xi)-i\sum_{n=1}^{d}\frac{\lambda_n}{(2\pi)^{n}}\int^t_{T_\star}\int e^{is\Psi^n(\eta)}\mathcal{N}^n(s,\xi,\eta)d\eta ds.
\end{align*}
where $f_\star=e^{-iT_\star\Delta/2}u_\star$. Differentiating with respect to $\xi$ leads to 
\begin{align*}
\partial_\xi^j\widehat{\mathcal{I}(u)}(t,\xi)&=\partial_\xi^j\hat{f_\star}(\xi)-i\sum_{n=1}^{d}\frac{\lambda_n}{(2\pi)^{n}}\int^t_{T_\star}\int e^{is\Psi^n(\eta)}\partial_\xi^j\mathcal{N}^n(s,\xi,\eta)d\eta ds,
\end{align*}
for $j\in\mathbb{N}$, $0\leq j\leq 2n+1$, and for 
\begin{align*}
\partial_\xi^j\mathcal{N}^n=\sum_{\overrightarrow{m}\in\mathbb{N}^{2n+1},|\overrightarrow{m}|=j}^{2n+1}\binom{j}{\overrightarrow{m}}\left(\prod^{n}_{k=1}\partial_\xi^{\overrightarrow{m}_{2k+1}}\hat{f}(H^n_{k+1}-\eta_{2k})\partial_\xi^{\overrightarrow{m}_{2k}}\overline{\hat{f}(H^n_{k}-\eta_{2k})}\right)\partial_\xi^{\overrightarrow{m}_1}\hat{f}(H^n_1),
\end{align*}
where $f(s)=e^{-is\Delta/2}u(s)$. Then, applying the inverse change of variable in $\eta$ of that of lemma \ref{lem:phasenoxi} to recover the original unknowns of notation \ref{nota:Pfraknota} leads to
\begin{align*}
\frac{1}{(2\pi)^{n}}\int e^{is\Psi^n(\eta)}\partial_\xi\mathcal{N}^n(s,\xi,\eta)d\eta =\sum_{\overrightarrow{m}\in\mathbb{N}^{2n+1},|\overrightarrow{m}|=j}^{2n+1}\binom{j}{\overrightarrow{m}}\frac{1}{(2\pi)^n}\int e^{is\Phi^n(\xi,\eta)}\mathcal{X}^n_{\overrightarrow{m}}(s,\xi,\eta)d\eta, 
\end{align*}
with 
\begin{align}
\mathcal{X}^n_{\overrightarrow{m}}(s,\xi,\eta)= 
\partial_\xi^{\overrightarrow{m}_{2n+1}}\hat{f}(\xi-\eta_{2n})\partial_\xi^{\overrightarrow{m}_{2n}}\overline{\hat{f}(\eta_{2n-1}-\eta_{2n})}\left(\prod^{n-1}_{k=1}\partial_\xi^{\overrightarrow{m}_{2k+1}}\hat{f}(\eta_{2i+1}-\eta_{2k})\partial_\xi^{\overrightarrow{m}_{2k}}\overline{\hat{f}(\eta_{2k-1}-\eta_{2k})}\right)\partial_\xi^{\overrightarrow{m}_1}\hat{f}(\eta_1).
\end{align}
Let $\sup_{k\leq 2n+1}\overrightarrow{m}_{k}=\overrightarrow{m}_{2n+1}$ without loss of generality. Distributing the phase $\Phi^n(\xi,\eta)$ back onto each profile so as to reconstruct the Fourier transform of the Schrödinger propagator acting on every $\hat{f}$ and using Plancherel identity yields
\begin{align*}
||\frac{1}{(2\pi)^n}\int e^{is\Phi^n(\xi,\eta)}\mathcal{X}^n_{\overrightarrow{m}}(s,\xi,\eta)d\eta||_{L^2_\xi}&\leq ||e^{is\Delta/2}(x^{\overrightarrow{m}_{2n+1}}f(s))\left(\prod^{n}_{k=1}\overline{e^{is\Delta/2}(x^{\overrightarrow{m}_{2k}}f(s))}e^{is\Delta/2}(x^{\overrightarrow{m}_{2k-1}}f(s))\right)||_{L^2_x}\\
&\leq ||x^{\overrightarrow{m}_{2n+1}}f(s)||_{L^2_x}\prod^{2n}_{k=1}||e^{is\Delta/2}(x^{\overrightarrow{m}_{k}}f(s))||_{L^\infty_x}\\
&\lesssim s^{-n}||\langle x\rangle^{\overrightarrow{m}_{2n+1}}f(s)||_{L^2_x}\prod^{2n}_{k=1}||x^{\overrightarrow{m}_{k}}f(s)||_{L^1_x}\\
&\lesssim s^{-n}||\langle x\rangle^{j}f(s)||_{L^2_x}^{2n+1}.
\end{align*}
Thus, 
\begin{align*}
\int^t_{T_\star}\int e^{is\Psi^n(\eta)}\partial_\xi^j\mathcal{N}^n(s,\xi,\eta)d\eta ds&\lesssim \int^t_{T_\star}s^{-n}||\langle x\rangle^{j}f(s)||_{L^2_x}^{2n+1}ds\\
&\lesssim \int^t_{T_\star}s^{(2n+1)\alpha_j-n }||s^{-\alpha_j}\langle x\rangle^{j}f(s)||_{L^2_x}^{2n+1}ds\\
&\lesssim (t^{(2n+1)\alpha_j-n+1}-T_\star^{(2n+1)\alpha_j-n+1})M^{2n+1},
\end{align*}
which implies  
\begin{align*}
||x^j\mathcal{I}(u)(t)||_{L^2_x}&=||\partial_\xi^j\mathcal{I}(u)(t)||_{L^2_\xi}\\
&\leq ||\partial_\xi^j\hat{f_\star}(\xi)||_{L^2_\xi}+\sum_{n=1}^{d}\frac{\lambda_n}{(2\pi)^{n}}||\int^t_{T_\star}\int e^{is\Psi^n(\eta)}\partial^j_\xi\mathcal{N}^n(s,\xi,\eta)d\eta ds||_{L^2_\xi}\\
&\leq T_\star^{\alpha_j}\varepsilon+C(t^{(2n+1)\alpha_j-n+1}-T_\star^{(2n+1)\alpha_j-n+1})M^{2n+1}
\end{align*}
for a certain $C>0$ that only depends on universal constants and the coefficients of the polynomial nonlinearity. Finally, for $T>0$, $M>0$ and $\varepsilon>0$ sufficiently small,  we obtain 
\begin{align}
||\mathcal{H}(u)||_A\leq ||u||_A.
\end{align}
\end{proof}

\begin{proof}[Proof of Theorem \ref{unTheorem:localexist}]
Using lemma \ref{lem:cazenave} together with standard fixed point argument, we deduce that, for $T>0$ and $\varepsilon>0$ sufficiently small, there exists a sequence $(u_k)_{k\in\mathbb{N}}$ defined by $u_{k+1}=\mathcal{H}(u_k)$ which converges in $L^\infty_t(I,H^1_x)$ to $u\in L^\infty_t(I,H^1_x)$ the unique solution to \eqref{eq:NLS} with initial data $u_\star$. Then, since the Schrödinger propagator $e^{-it\Delta/2}$ is an isometry on $H^1$, the sequence of profiles $(f_k)_{k\in\mathbb{N}}=e^{-it\Delta/2}u_k$ converges in $L^\infty_t(I,H^1_x)$ to $f=e^{-it\Delta/2}u$ which is solution to \eqref{eq:NLSf}. Moreover, by lemma \ref{lem:preserve}, the following uniform bound holds: 
\begin{align}
\sup_{k\in\mathbb{N}}||u_k||_A\leq C(\varepsilon,T)
\end{align} 
or equivalently
\begin{align}
\sup_{k\in\mathbb{N}}(\sum_{j=0}^{2n+1}||t^{-\alpha_j}f_k||_{L^{\infty}_t(I,H^{0,j}_x)})\leq C(\varepsilon,T).
\end{align} 
To pass to the limit in these weighted norms, we proceed as follows. Identifying $\langle x\rangle^jdx$ with a measure $d\mu^j$, we may regard $H^{0,j}_x$ as $B=L^2(\mathbb{R},d\mu^j)$. The latter is reflexive and thus enjoys the Radon-Nikodym property, see \cite{zbMATH03576139}, Corollary 13, or \cite{zbMATH06644896}. Then, identifying further $t^jdt$ with a measure $d\nu^j$, we obtain uniform bounds on $L^{\infty}(I,B,d\nu^j)$, which is the dual of $L^1(I,B^\star,d\nu^j)$, where $B^\star$ denotes the dual space of $B$. Thus, by Banach-Alaoglu Theorem, there exists a subsequence $(f_{\sigma(k)})_{k\in\mathbb{N}}$ that converges in the weak-$\star$ topology of each space $L^\infty(I,B,d\nu^j)$. By uniqueness, the weak limits must coincide with the strong limit in $L^\infty_t(I,H^1_x)$. Consequently, we obtain 
\begin{align}
\sum_{j=0}^{2n+1}||t^{-\alpha_j}f||_{L^{\infty}_t(I,H^{0,j}_x)}\leq C(\varepsilon,T).
\end{align} 
The continuity in time follows from the evolution equations \eqref{eq:NLS} and \eqref{eq:NLSf}, which allows us to recover the statement \eqref{eq:initialboundbootstrap}.
\end{proof}
\begin{cor}[Corollary of Theorem \ref{unTheorem:localexist}]
\label{cor:modprof}
Let $u\in C(I,H^{1,0}_x)$ be the solution to \eqref{eq:NLS} given by Theorem \ref{unTheorem:localexist} with initial data $||u_\star||_{H^{1,0}_x}+\sum^{2N+1}_{j=0}||T_\star^{-\alpha_j}e^{-iT_\star\Delta/2}u_\star||_{H^{0,j}_x}<\varepsilon$ for some $\varepsilon>0$. Let $f(t,x)=e^{-it\Delta/2}u(t,x)$ be the associated profile and let $\hat{\omega}(t,x)=e^{i\int^t_{T_\star}\frac{\lambda_1|\hat{f}(s,\xi)|^2}{s}ds}f(t,x)$. Then,
\begin{align}
\label{eq:omegaevoineq}
||\hat{\omega}||_{C^0(I,W^{2N,\infty}_\xi)}\leq C_X(\varepsilon),
\end{align}
where $C_X$ is the same as in Theorem \ref{unTheorem:localexist}, without loss of generality.\\
\end{cor}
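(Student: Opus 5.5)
The plan is to bound $\partial^k_\xi\hat{\omega}$ in $L^\infty_\xi$ for each $0\leq k\leq 2N$ by the Leibniz rule, splitting the derivatives between the phase factor $\Theta_\star(t,\xi)=e^{iG(t,\xi)}$, $G(t,\xi)=\int^t_{T_\star}\frac{\lambda_1|\hat{f}(s,\xi)|^2}{s}ds$, and the profile $\hat{f}$. (Here $\hat{\omega}$ is understood as $\Theta_\star\hat{f}$, consistent with the definition of $\hat{w}$ in Section 3.) Since $|\Theta_\star|=1$, it suffices to control two families of quantities on $I$: the norms $\|\partial^j_\xi\hat{f}(t)\|_{L^\infty_\xi}$ for $j\leq 2N$, and the norms $\|\partial^j_\xi G(t)\|_{L^\infty_\xi}$ for $1\leq j\leq 2N$. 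The latter control the derivatives of $\Theta_\star$ through the Faà di Bruno formula, since $\partial^k_\xi e^{iG}$ is $e^{iG}$ times a polynomial in $\partial_\xi G,\dots,\partial^k_\xi G$.

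For the profile, I would use the one-dimensional embedding $\|\partial^j_\xi\hat{f}\|_{L^\infty_\xi}\lesssim\|\partial^j_\xi\hat{f}\|_{H^1_\xi}$. I would then use Plancherel in the form $\|\partial^j_\xi\hat{f}\|_{L^2_\xi}=\|x^jf\|_{L^2_x}$ to get
\begin{align*}
\|\partial^j_\xi\hat{f}(t)\|_{L^\infty_\xi}\lesssim\|f(t)\|_{H^{0,j+1}_x}\leq t^{\alpha_{j+1}}\,\|t^{-\alpha_{j+1}}f\|_{C^0(I,H^{0,j+1}_x)}.
\end{align*}
For $j\leq 2N$ we have $j+1\leq 2N+1$, so the bound \eqref{eq:initialboundbootstrap} of Theorem \ref{unTheorem:localexist} applies. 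Because $I=[T_\star,T_\star+T]$ has finite length $T$, the factor $t^{\alpha_{j+1}}\leq (T_\star+T)^{\alpha_{j+1}}$ is harmless on $I$. This yields $\|\hat{f}\|_{C^0(I,W^{2N,\infty}_\xi)}\lesssim C_X(\varepsilon)$, up to a factor depending only on the time window.

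For the phase, $\partial^j_\xi G(t,\xi)=\lambda_1\int^t_{T_\star}s^{-1}\partial^j_\xi(\hat{f}\overline{\hat{f}})\,ds$. Expanding by Leibniz and using the previous bound gives
\begin{align*}
\|\partial^j_\xi G(t)\|_{L^\infty_\xi}\lesssim \ln\!\Big(\frac{t}{T_\star}\Big)\,C_X(\varepsilon)^2\lesssim T\,C_X(\varepsilon)^2 .
\end{align*}
Combining this with the Faà di Bruno formula, $\|\partial^k_\xi\Theta_\star\|_{L^\infty_\xi}\lesssim\sum_{l=1}^{k}(T C_X(\varepsilon)^2)^l$, which is the same computation as in the proof of Proposition \ref{propal:controlerror}. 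The Leibniz rule then gives
\begin{align*}
\|\hat{\omega}\|_{C^0(I,W^{2N,\infty}_\xi)}\lesssim C_X(\varepsilon)\big(1+h(C_X(\varepsilon))\big)
\end{align*}
for some polynomial $h$. Since $C_X(\sigma)\to0$ as $\sigma\to0$, the right-hand side is again a function of $\varepsilon$ tending to $0$. Enlarging $C_X$, which the statement permits without loss of generality, gives \eqref{eq:omegaevoineq}. Continuity in time follows from continuity of $t\mapsto\hat{f}(t)$ in $W^{2N,\infty}_\xi$, which comes from the $C^0$-in-time weighted bounds and the same embedding, together with continuity of $G$ in $t$.

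The main obstacle I expect is bookkeeping rather than any analytic difficulty. The one point that needs care is that the top derivative $\partial^{2N}_\xi\hat{f}$ in $L^\infty_\xi$ uses exactly the highest weight $2N+1$ available in \eqref{eq:initialboundbootstrap}, so the Sobolev embedding must be applied with no loss beyond one weight. The constants produced by the $t^{\alpha_j}$ factors and the logarithms depend on $T_\star$ and $T$; this is acceptable only because the corollary is a local-in-time statement. Uniform-in-time control is deferred to the bootstrap in Section \ref{section:bootstrap}.
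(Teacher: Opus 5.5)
Your individual estimates are correct, but together they only give a bound that grows with $T_\star$, through the factor $(T_\star+T)^{\alpha_{2N+1}}$. The statement asks for the same $C_X(\varepsilon)$ as in Theorem \ref{unTheorem:localexist}. There $T$ is fixed while $T_\star\geq 1$ is arbitrary, so $C_X$ may not depend on $T_\star$.

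Your remark that this dependence is acceptable ``because the corollary is local in time'' is where the argument falls short. Remark \ref{rem:usecor} applies the corollary at arbitrarily large $T_\star$ to continue the solution with $\hat w$ still under control. The phase is not the issue: using $\ln(t/T_\star)\leq T/T_\star$, your bound on $\partial^j_\xi G$ is actually uniform in $T_\star$. The issue is the direct step
$$\|\hat f(t)\|_{W^{2N,\infty}_\xi}\lesssim\|f(t)\|_{H^{0,2N+1}_x}\leq t^{\alpha_{2N+1}}C_X(\varepsilon),$$
which pays the slow growth of the weighted norms. This is the same loss the bootstrap norm avoids by carrying $\|\hat w\|_{W^{2N,\infty}_\xi}$ as a separate quantity with no time weight.

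The missing idea is to propagate the $W^{2N,\infty}_\xi$ bound from time $T_\star$ by integrating the equation for $\hat\omega$, as the paper does. The phase cancels $-i\lambda_1t^{-1}|\hat f|^2\hat f$, leaving $\partial_t\hat\omega=-i\Theta_\star\mathfrak{R}^1_1-i\Theta_\star\mathfrak{R}^{>1}_0$. Proposition \ref{propal:controlerror} with $p=2N$ and $C=C_X(\varepsilon)$ gives $\|\mathfrak{R}^1_1(s)\|_{W^{2N,\infty}_\xi}+\|\mathfrak{R}^{>1}_0(s)\|_{W^{2N,\infty}_\xi}\lesssim s^{-1-\delta}C_X(\varepsilon)^3$. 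Your Faà di Bruno computation shows $\Theta_\star$ is bounded in $W^{2N,\infty}_\xi$ on $I$ uniformly in $T_\star$, since the phase derivatives are $\lesssim C_X(\varepsilon)^2\int_{T_\star}^{T_\star+T}\sigma^{2\alpha_{2N+1}-1}d\sigma\leq T\,C_X(\varepsilon)^2$. Hence
\begin{align*}
\|\hat\omega(t)-\hat\omega(T_\star)\|_{W^{2N,\infty}_\xi}\lesssim h(C_X(\varepsilon))\,C_X(\varepsilon)^3\int_{T_\star}^{T_\star+T}s^{-1-\delta}\,ds\lesssim T\,h(C_X(\varepsilon))\,C_X(\varepsilon)^3,
\end{align*}
uniformly in $T_\star$. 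The growing bounds now only ever multiply an integrable factor.

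In fairness, neither argument bounds the endpoint value $\hat\omega(T_\star)=\hat f_\star$ uniformly in $T_\star$ from the stated hypotheses alone, and the paper's proof passes over this term. For instance, take $\hat f_\star(\xi)=\varepsilon A\phi(A^2\xi)$ with $\phi$ Schwartz and $A^2=T_\star^{\alpha_1}$. It satisfies the hypotheses up to a constant, because $\alpha_j\geq j\alpha_1$, yet $\|\hat f_\star\|_{L^\infty_\xi}\sim\varepsilon T_\star^{\alpha_1/2}$.

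So the uniform statement has to be read as in Remark \ref{rem:usecor}: a bound on $\hat w$ on $[1,T_\star]$ persists on $[T_\star,T_\star+T]$ up to a small increment that does not grow with $T_\star$. The increment estimate above gives exactly this. Your route cannot, because it never uses the value of $\hat\omega$ at $T_\star$, only the weighted $L^2_x$ bounds on $f$. What you do prove is useful qualitative information: a finite bound on $I$ with a $T_\star$-dependent constant, and continuity of $t\mapsto\hat\omega(t)$ in $W^{2N,\infty}_\xi$. It is nonetheless strictly weaker than the corollary as stated and used.
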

\begin{rem}
\label{rem:usecor}
We observe that, when $T_\star=1$, the term $\hat{\omega}$ is equal to the Fourier transform of the modified profile $\hat{w}$. 
In view of the bootstrap argument developed in the next section, Corollary \ref{cor:modprof} and Theorem \ref{unTheorem:localexist} are typically applied in order to continue the solution beyond a time $T_\star>1$. In this context, one has $\hat{\omega}=e^{i\int^{T_\star}_{1}\frac{\lambda_1|\hat{w}(s,\xi)|^2}{s}ds}\hat{w}$, so that, if $\hat{w}$ satisfies \eqref{eq:omegaevoineq} on the interval $[1,T_\star]$, then Corollary \ref{cor:modprof} ensures that $\hat{w}$ also satisfies \eqref{eq:omegaevoineq} on $[1,T_\star+T]$.
\end{rem}
\begin{proof}
From \eqref{eq:showfirstorder}, direct computations show that $\hat{\omega}=$ is solution to  
\begin{equation}
\label{eq:omegaevo}
    \partial_t\hat{\omega}(t,\xi)=-i\theta(t,\xi)\mathfrak{R}^1_1(t,\xi)-i\theta(t,\xi)\mathfrak{R}^{>1}_0(t,\xi)
\end{equation}
where $\mathfrak{R}^1_1$ and $\mathfrak{R}^{>1}_0$ are defined with respect to $f$ in Lemma \ref{lem:exactformP} and Notation \ref{nota:sumtoterror}, respectively, and where $\theta=e^{i\int^t_{T_\star}\frac{\lambda_1|\hat{f}(s,\xi)|^2}{s}ds}$. Integrating \eqref{eq:omegaevo} and differentiating it $j$ times with respect to $\xi$ yields
\begin{equation}
\label{eq:modifprofilelocal}
  \partial_\xi^j\hat{\omega}(t,\xi)= \partial_\xi^j\hat{\omega}(T_\star,\xi)-i\int^t_{T_\star}\sum_{\overrightarrow{m}\in\mathbb{N}^{2}, |\overrightarrow{m}|=j}\binom{j}{\overrightarrow{m}}\left(\partial_\xi^{\overrightarrow{m}_1}\theta(t,\xi)\partial_\xi^{\overrightarrow{m}_2}\mathfrak{R}^1_{1}(s,\xi)+\partial_\xi^{\overrightarrow{m}_1}\theta(t,\xi)\partial_\xi^{\overrightarrow{m}_2}\mathfrak{R}^{>1}_0(s,\xi)\right)ds.
\end{equation}
Due to the control on the weighted norms of $f$ given in \eqref{eq:initialboundbootstrap}, we have
\begin{align*}
||\hat{f}(t)||_{W^{2N,\infty}_\xi}\leq t^{\alpha_{2N+1}}C_X(\varepsilon)
\end{align*} 
by Sobolev embedding, which implies that 
\begin{align*}
||\theta(t)||_{W^{2N,\infty}_\xi}\leq t^{\beta}C(\varepsilon),
\end{align*}
where $\lim_{\sigma\to0}C(\sigma)=0$ and where $\beta>0$. Furthermore, by proposition \ref{propal:controlerror} on the interval $I$ and using again the bounds provided by \eqref{eq:initialboundbootstrap}, we obtain 
\begin{align*}
||\mathfrak{R}^1_{1}(t)||_{W^{2N,\infty}_\xi}\leq t^{-\gamma}C(\varepsilon)
\end{align*} 
and 
\begin{align*}
||\mathfrak{R}^{>1}_0(t)||_{W^{2N,\infty}_\xi}\leq t^{-\gamma}C(\varepsilon),
\end{align*} 
for some $\gamma>0$ and for the same $C$, without loss of generality. Combining these estimates, \eqref{eq:modifprofilelocal} implies  \eqref{eq:omegaevoineq} straightforwardly, which concludes the proof.
\end{proof}
\begin{cor}[Corollary of Theorem \ref{unTheorem:localexist}]
\label{cor:decay}
Let $u\in C(I,H^{1,0}_x)$ be the solution to \eqref{eq:NLS} given by Theorem \ref{unTheorem:localexist} associated with initial data satisfying $||u_\star||_{H^{1,0}_x}+\sum^{2N+1}_{j=0}||T_\star^{-\alpha_j}e^{-iT_\star\Delta/2}u_\star||_{H^{0,j}_x}<\varepsilon$ for some $\varepsilon>0$. Let $f(t,x)=e^{-it\Delta/2}u(t,x)$ denotes the associated profile. Then, the following decay estimate holds
\begin{align}
\label{eq:sharpdecay}
||t^{1/2}u||_{C^0(I,L^\infty_x)}\leq C_X(\varepsilon),
\end{align}
where $C_X$ is the same constant as in Theorem \ref{unTheorem:localexist} and Corollary \ref{cor:modprof}, without loss of generality.
\end{cor}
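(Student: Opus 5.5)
We will use the representation formula \eqref{eq:trickformulauf}, which gives $u(t)=e^{it\Delta/2}f(t)$ as an explicit integral. Expanding $|x-y|^2=x^2-2xy+y^2$ we write
\begin{align*}
u(t,x)=\frac{e^{ix^2/2t}}{(2i\pi t)^{1/2}}\int e^{-ixy/t}f(t,y)\,dy+\frac{e^{ix^2/2t}}{(2i\pi t)^{1/2}}\int e^{-ixy/t}\left(e^{iy^2/2t}-1\right)f(t,y)\,dy .
\end{align*}
The first term equals $\frac{e^{ix^2/2t}}{(it)^{1/2}}\hat{f}\left(t,\frac{x}{t}\right)$. Its modulus is $t^{-1/2}|\hat{\omega}(t,x/t)|$, because the phase factor relating $\hat{\omega}$ to $\hat f$ has modulus one. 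By Corollary \ref{cor:modprof} with $j=0$, this is bounded by $t^{-1/2}C_X(\varepsilon)$ uniformly on $I$. This is the step that gives the sharp rate without any loss $t^{\alpha}$: the unmodified bound $\|\hat f(t)\|_{L^\infty_\xi}\lesssim\|t^{-\alpha_1}f\|_{H^{0,1}}t^{\alpha_1}$ would lose a small power, so we must use the $L^\infty_\xi$ control of the modified profile.

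For the second term we use $|e^{iy^2/2t}-1|\lesssim (|y|^2/t)^{\theta}$ for any $\theta\in[0,1]$, together with Cauchy--Schwarz with weight $\langle y\rangle^{-1/2-\epsilon}$. This bounds it by
\begin{align*}
t^{-1/2-\theta}\|\langle y\rangle^{2\theta+1/2+\epsilon}f(t)\|_{L^2_y}\lesssim t^{-1/2-\theta+\alpha_1}\|t^{-\alpha_1}f(t)\|_{H^{0,1}_x}.
\end{align*}
To stay within the $H^{0,1}_x$ control we choose $\theta<1/4$, say $\theta=1/4-\epsilon$. Since $\alpha_1<1/(16d+8)<1/8$, taking $\epsilon$ small gives $-\theta+\alpha_1<0$. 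The bound \eqref{eq:initialboundbootstrap} then gives a contribution $\lesssim t^{-1/2}C_X(\varepsilon)$.

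Adding both pieces and taking the supremum over $t\in I$ gives \eqref{eq:sharpdecay}, after possibly enlarging $C_X$ while keeping $C_X(\sigma)\to 0$. Continuity in time of $t^{1/2}u$ in $L^\infty_x$ follows from $u\in C(I,H^1_x)$ and the Sobolev embedding.

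The main point needing care is the first term. We must check that Corollary \ref{cor:modprof} indeed controls $\sup_\xi|\hat f(t,\xi)|$ without a growing factor. This holds because $|\hat f|=|\hat\omega|$ pointwise. The remaining work is routine: the weight bookkeeping in the second term, which requires $\theta<1/4$ so that only $H^{0,1}_x$ is used.
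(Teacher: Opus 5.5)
Your proof is correct and is essentially the paper's argument. The paper cites Lemma \ref{lem:semigroup}, whose standard proof is exactly your splitting $e^{iy^2/2t}=1+(e^{iy^2/2t}-1)$ in the representation formula, and then, as you do, gets the sharp rate from $|\hat f|=|\hat\omega|$ and Corollary \ref{cor:modprof}, absorbing the $t^{\alpha_1}$ growth of $\|f(t)\|_{H^{0,1}_x}$ into a gain $t^{-\theta}$ with $\alpha_1<\theta<1/4$.
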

We first recall a classical estimate for the Schrödinger semigroup, taken from \cite{zbMATH01192427} (also used in \cite{zbMATH06033880}). \\
\begin{lem}[Lemma from \cite{zbMATH01192427}]
\label{lem:semigroup}
The following bound holds: 
\begin{align}
||e^{it\Delta/2}\phi||_{L^\infty_x}\lesssim t^{-1/2}||\hat{\phi}||_{L^\infty_\xi}+t^{-1/2-\delta}||\phi||_{H^{0,\gamma}_x}
\end{align}
for $\gamma>1/2+2\delta$. 
\end{lem}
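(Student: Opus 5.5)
The plan is to use the explicit kernel of the free Schrödinger group, namely the representation formula \eqref{eq:trickformulauf}, and to split off the main term by a Taylor-type bound on the quadratic phase. For $t\geq1$ I will write
\begin{align*}
e^{it\Delta/2}\phi(x)=\frac{1}{(2i\pi t)^{1/2}}\int e^{i|x-y|^2/2t}\phi(y)dy=\frac{e^{ix^2/2t}}{(2i\pi t)^{1/2}}\int e^{-ixy/t}e^{iy^2/2t}\phi(y)dy.
\end{align*}
I will then decompose $e^{iy^2/2t}=1+(e^{iy^2/2t}-1)$. With the normalisation of the Fourier transform fixed in the notation section, the contribution of the $1$ is exactly $(it)^{-1/2}e^{ix^2/2t}\hat{\phi}(x/t)$. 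Its modulus is at most $t^{-1/2}||\hat{\phi}||_{L^\infty_\xi}$ uniformly in $x$, which gives the first term of the claimed bound.

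For the remainder I will not exploit the oscillation $e^{-ixy/t}$ at all. I will simply bound it in absolute value, which yields
\begin{align*}
\Big|\frac{1}{(2\pi t)^{1/2}}\int e^{-ixy/t}(e^{iy^2/2t}-1)\phi(y)dy\Big|\lesssim t^{-1/2}\int |e^{iy^2/2t}-1||\phi(y)|dy .
\end{align*}
The key elementary inequality is $|e^{i\theta}-1|\leq \min(2,|\theta|)\leq 2|\theta|^{\delta}$ for any $\delta\in[0,1]$ and real $\theta$. Applied with $\theta=y^2/2t$, it gives $|e^{iy^2/2t}-1|\lesssim t^{-\delta}|y|^{2\delta}$. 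The remainder is therefore bounded by $t^{-1/2-\delta}||\langle y\rangle^{2\delta}\phi||_{L^1_y}$.

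The final step converts this $L^1$ bound into the weighted $L^2$ norm by Cauchy--Schwarz:
\begin{align*}
||\langle y\rangle^{2\delta}\phi||_{L^1_y}\leq ||\langle y\rangle^{2\delta-\gamma}||_{L^2_y}\,||\langle y\rangle^{\gamma}\phi||_{L^2_y}.
\end{align*}
The first factor is finite precisely when $2(\gamma-2\delta)>1$, that is $\gamma>1/2+2\delta$, which is the hypothesis of the statement. Combining the two pieces gives the estimate.

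There is no real obstacle here. The only point needing care is the choice of the interpolation exponent in $|e^{i\theta}-1|\lesssim|\theta|^\delta$, which is only valid for $\delta\leq1$, together with the matching integrability condition, and this is exactly what dictates the relation between $\gamma$ and $\delta$. Implicitly $t\geq1$ and $\delta\in(0,1]$; this is the range in which the lemma is used, for instance in Corollary \ref{cor:decay}.
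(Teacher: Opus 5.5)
Your argument is correct. It is the standard proof of this cited lemma: you split $e^{iy^2/2t}=1+(e^{iy^2/2t}-1)$ in the kernel representation, use $|e^{i\theta}-1|\lesssim|\theta|^{\delta}$ for $\delta\in[0,1]$, and convert the resulting $L^1$ bound into the $H^{0,\gamma}_x$ norm by Cauchy--Schwarz, which is exactly where $\gamma>1/2+2\delta$ comes from. The paper gives no proof of its own beyond the citation, but it uses the same Taylor splitting of $e^{i|y|^2/2t}$ when deriving the expansion of $u$ in the proof of Theorem~\ref{unTheorem:mainTH}.
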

\begin{proof}[Proof of Corollary \ref{cor:decay}]
By Corollary \ref{cor:modprof} together with remark \ref{rem:usecor}, we have $|\hat{f}|=|\hat{\omega}|$ and $||\hat{\omega}||_{C^0(I,L^{\infty}_\xi)}\leq C_X(\varepsilon)$. Then, the assumption on the sequence $(\alpha_j)_{0\leq j\leq 2N+1}$ in Theorem \ref{unTheorem:localexist} ensures the existence of $\delta\in\mathbb{R}$ such that $\alpha_1<\delta<1/4$. Finally, lemma \ref{lem:semigroup} implies that
\begin{align*}
t^{1/2}||u(t)||_{L^\infty_x}&=t^{1/2}||e^{it\Delta/2}f(t)||_{L^\infty_x}\\
&\lesssim ||\hat{f}(t)||_{L^\infty_\xi}+t^{-\delta}||f(t)||_{H^{0,1}_x}\\
&\lesssim ||\hat{\omega}(t)||_{L^\infty_\xi}+t^{-\delta+\alpha_1}||t^{-\alpha_1}f(t)||_{H^{0,1}_x}.
\end{align*} 
Then, the bound \ref{eq:omegaevoineq} of Corollary \ref{cor:modprof} and the bound \eqref{eq:initialboundbootstrap} of Theorem \ref{unTheorem:localexist} yield directly \eqref{eq:sharpdecay}. 
\end{proof}
\section{Global existence and bootstrap}
\label{section:bootstrap}
In this section, we show that the solution to \eqref{eq:NLS} constructed in Theorem \ref{unTheorem:localexist} extends globally and satisfies uniform bounds with respect to the bootstrap norm $||u||_{X_T}$ defined below.\\
\begin{nota}
\label{nota:bootstrapnorm}
We define the bootstrap norm by 
\begin{align}
||u||_{X_T}:=||u||_{C^0_TH^{1,0}_x}+\sum^{2N+1}_{j=0}||t^{-\alpha_j}f||_{C^0_TH^{0,j}_x}+||\hat{w}||_{C^0_TW^{2N,\infty}_\xi}+||t^{1/2}u||_{C^0_TL^\infty_x}
\end{align}
for $0<6\alpha_{j-1}\leq\alpha_j$ with $\alpha_{2n+1}<1/(16d+8)$ and some $T>0$.
\end{nota}
Firstly, we state the bootstrap argument. \\
\begin{propal}
\label{propal:bootstrap}
Assume that $u$ is a local solution to \eqref{eq:NLS} on an interval $[1,T]$ satisfying
\begin{align}
||u||_{X_T}<\varepsilon_1,
\end{align}
for some $\varepsilon_1>0$, and whose initial data $u|_{t=1}=u_1$ satisfies 
\begin{align}
||u_1||_{H^{1,0}_x}+\sum^{2N+1}_{j=0}||e^{-i\Delta/2}u_1||_{H^{0,j}_x}<\varepsilon_0
\end{align}
for some $\varepsilon_0>0$. Then, for $\varepsilon_0>0$ sufficiently small
\begin{align}
||u||_{X_T}\lesssim \varepsilon_0+\Gamma(\varepsilon_1)
\end{align}
where $\Gamma$ denotes a polynomial whose first non-zero coefficient is at least of degree $3$.
\end{propal}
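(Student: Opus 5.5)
We control the four pieces of $\|u\|_{X_T}$ separately, in order, because each estimate feeds the next. Write $\varepsilon_1$ for the bootstrap bound.

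\textbf{Step 1: weighted norms of the profile.} This step is the core of the argument, and we treat the weights $\|t^{-\alpha_j}f\|_{C^0_TH^{0,j}_x}$ by induction on $j=0,\dots,2N+1$. As in the proof of Lemma \ref{lem:preserve}, differentiate the Duhamel formula \eqref{eq:NLSduhamfFourier} $j$ times in $\xi$, using the $\xi$-free phase of Lemma \ref{lem:phasenoxi}. Then return to the original variables and distribute the phase back onto each factor. The Plancherel/H\"older argument of Lemma \ref{lem:preserve} then bounds each multilinear term by
\[
\|x^{m_{\max}}f(s)\|_{L^2_x}\prod_{k}\|e^{is\Delta/2}(x^{m_k}f(s))\|_{L^\infty_x}.
\]
The crude $L^1$ bound used locally in Lemma \ref{lem:preserve} is not integrable for $n=1$, so we must refine it here.

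\textbf{Refining the $L^\infty$ factors.} For each factor with $m_k$ small we use Lemma \ref{lem:semigroup}. This gives
\[
\|e^{is\Delta/2}(x^{m}f)\|_{L^\infty}\lesssim s^{-1/2}\|\partial_\xi^m\hat f\|_{L^\infty_\xi}+s^{-1/2-\delta}\|x^m f\|_{H^{0,\gamma}}.
\]
The first term is controlled by $\|\hat w\|_{W^{2N,\infty}}$ together with the logarithmic growth of $\Theta$, since $\hat f=\overline{\Theta}\hat w$. The second term costs at most one extra weight, which we handle with Lemma \ref{lem:interpol} against the top weight.

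\textbf{The hardest case.} We expect the main obstacle to be the cubic term ($n=1$) with $j$ derivatives spread so that no factor sits at the top weight. There the bound is $s^{-1}$ times a growing norm $s^{\alpha_{j'}}$ for some $j'$ near $j$. We will use the hierarchy $6\alpha_{j-1}\le\alpha_j$ exactly here. When the largest index satisfies $m_{\max}=j$, the other two factors carry $L^\infty$ bounds with only $\log$ losses, and integrating gives
\[
\int_1^t s^{-1+\alpha_j}\log^{C}\!s\,ds\lesssim t^{\alpha_j}.
\]
Otherwise all weights are strictly below $j$, and the time growth is at most $s^{3\alpha_{j-1}}\ll s^{\alpha_j}$ after interpolation. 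In every case the contribution is cubic in $\varepsilon_1$. The higher terms $n\ge2$ carry the factor $s^{-n}$, so they are integrable outright.

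\textbf{Step 2: the energy norm.} We bound $\|u\|_{H^1}$ using conservation of mass and energy. The potential part of the energy is $O(\|u\|_{L^\infty}^2\|u\|_{L^2}^2)$, which is small, so $\|u\|_{H^1}\lesssim\varepsilon_0+\varepsilon_1^3$.

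\textbf{Step 3: the modified profile.} For $\|\hat w\|_{W^{2N,\infty}}$ we integrate \eqref{eq:modifprofileevointro1} from $1$ to $t$. We estimate $\Theta\mathfrak R^1_1$ and $\Theta\mathfrak R^{>1}_0$ in $W^{2N,\infty}_\xi$ by the second statement of Proposition \ref{propal:controlerror}, taking $p=2N$ and using the weights from Step 1 with exponent $\alpha_{2N+1}$. This yields the decay $s^{-1-\delta}h(\varepsilon_1)\varepsilon_1^3$, which is integrable. Hence
\[
\|\hat w(t)\|_{W^{2N,\infty}}\le\|\hat f_1\|_{W^{2N,\infty}}+C\varepsilon_1^3,
\]
and $\|\hat f_1\|_{W^{2N,\infty}}\lesssim\|f_1\|_{H^{0,2N+1}}\le\varepsilon_0$. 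The only point to watch is that the $\log$ factors from $\partial_\xi\Theta$ are absorbed by $\delta$.

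\textbf{Step 4: decay.} The bound on $\|t^{1/2}u\|_{L^\infty}$ follows exactly as in Corollary \ref{cor:decay}, now with the improved bounds from Steps 1 and 3. Summing the four estimates gives $\|u\|_{X_T}\lesssim\varepsilon_0+\Gamma(\varepsilon_1)$.
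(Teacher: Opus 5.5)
Your proposal is correct and follows the paper's four-part structure: the energy bound from the conservation laws, the weighted norms of $f$, the integration of the $\hat w$ equation with Proposition \ref{propal:controlerror} at $p=2N$, and the decay via Lemma \ref{lem:semigroup}. Steps 2--4 are essentially the paper's arguments.

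The genuine difference lies in the cubic ($n=1$) part of the weighted estimate. The paper bounds each non-top factor by the $L^1\to L^\infty$ dispersive estimate, $\|e^{is\Delta/2}(x^{m}f)\|_{L^\infty_x}\lesssim s^{-1/2}\|\langle x\rangle^{m+1/2+\delta}f\|_{L^2_x}$, and then interpolates between consecutive weighted norms. For $j=2$, $\overrightarrow{m}=(1,1,0)$, this interpolation reaches the top weight $\langle x\rangle^{2}$, and that is where the geometric hierarchy $\alpha_j\ge 6\alpha_{j-1}$ is used.

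You instead apply Lemma \ref{lem:semigroup} to each non-top factor. Its main part, $s^{-1/2}\|\partial_\xi^{m}\hat f\|_{L^\infty_\xi}$, is controlled by the $W^{2N,\infty}_\xi$ bootstrap bound on $\hat w$, up to powers of $\ln s$ coming from $\partial_\xi^k\Theta$. This is legitimate because non-top indices satisfy $m\le\lfloor j/2\rfloor\le N$. The weighted part carries an extra $s^{-\delta}$, which beats any $s^{\alpha_{m+1}}$.

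Your route ties the weighted lemma to the $\hat w$ component of $X_T$, which the paper's version never uses. In return it needs essentially only $\alpha_{j-1}<\alpha_j$, since the non-top case gives $\int_1^t s^{-1+\alpha_{j-1}}(\ln s)^C\,ds\lesssim t^{\alpha_{j-1}}(\ln t)^C\lesssim t^{\alpha_j}$.

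One correction: your displayed bound $\int_1^t s^{-1+\alpha_j}\log^C s\,ds\lesssim t^{\alpha_j}$ is false for $C>0$, since the left side is of size $t^{\alpha_j}(\ln t)^C/\alpha_j$. It does no harm here, because when $m_{\max}=j$ the other two factors carry no weight. They are just $\|u(s)\|_{L^\infty_x}\le\varepsilon_1 s^{-1/2}$ from the bootstrap norm (equivalently, $|\hat f|=|\hat w|$ produces no logarithm), so $C=0$. This is exactly how the paper handles that configuration.
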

The proof is given in the following sequence of lemmas. \\
\begin{lem}
Under the hypotheses of proposition \ref{propal:bootstrap}, one has 
$||u||_{L^\infty_TH^{1,0}_x}\leq \varepsilon_0 +\Gamma(\varepsilon_1)$,
for $\Gamma$ a polynomial whose first non-zero coefficient is at least of degree $3$. 
\end{lem}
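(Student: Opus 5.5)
We would use the two conservation laws of \eqref{eq:NLS}, mass and energy. The gauge-invariant nonlinearity $\sum_{n=1}^d\lambda_n|u|^{2n}u$ is the derivative of the potential $G(|u|^2)=\sum_{n=1}^d\frac{\lambda_n}{n+1}|u|^{2n+2}$, so for $H^1_x$ solutions on $[1,T]$ we have
\begin{align*}
M(u(t))=\|u(t)\|_{L^2_x}^2=M(u_1),\qquad E(u(t))=\frac14\|\partial_xu(t)\|_{L^2_x}^2+\frac12\sum_{n=1}^d\frac{\lambda_n}{n+1}\|u(t)\|_{L^{2n+2}_x}^{2n+2}=E(u_1).
\end{align*}
We would justify these identities for the $H^1_x$ solutions of Theorem \ref{unTheorem:localexist} by the standard regularization argument in Cazenave's framework (Lemma \ref{lem:cazenave}): smooth the data, use continuous dependence, and pass to the limit.

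\textbf{Step 1: mass.} Mass conservation gives $\|u(t)\|_{L^2_x}=\|u_1\|_{L^2_x}\le\varepsilon_0$.

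\textbf{Step 2: energy.} Since the $\lambda_n$ may have either sign, we do not use coercivity. Instead we solve for the kinetic term:
\begin{align*}
\frac14\|\partial_xu(t)\|_{L^2_x}^2\le E(u_1)+C\sum_{n=1}^d\|u(t)\|_{L^{2n+2}_x}^{2n+2}.
\end{align*}
We bound the data term by $|E(u_1)|\lesssim\|u_1\|_{H^1_x}^2+\sum_n\|u_1\|_{H^1_x}^{2n+2}\lesssim\varepsilon_0^2$, using the one-dimensional Sobolev embedding $H^1_x\subset L^\infty_x$. For the potential term at time $t$ we use the bootstrap hypothesis $\|u\|_{X_T}<\varepsilon_1$ in the form
\begin{align*}
\|u\|_{L^{2n+2}_x}^{2n+2}\le\|u\|_{L^\infty_x}^{2n}\|u\|_{L^2_x}^2\le\|u\|_{H^1_x}^{2n}\varepsilon_0^2\lesssim\varepsilon_1^{2n}\varepsilon_0^2 .
\end{align*}
The dispersive decay $t^{-1/2}$ contained in $\|t^{1/2}u\|_{L^\infty_x}$ could also be used, but it is not needed here.

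\textbf{Step 3: conclusion.} Combining the two steps gives
\begin{align*}
\|u(t)\|_{H^{1,0}_x}\lesssim\varepsilon_0+\Big(\varepsilon_0^2+\sum_n\varepsilon_1^{2n}\varepsilon_0^2\Big)^{1/2}\lesssim\varepsilon_0+\varepsilon_0\sum_n\varepsilon_1^{n}.
\end{align*}
Since $\varepsilon_0\varepsilon_1^n\le\varepsilon_0^{3}+\varepsilon_1^{3n/2}$ for $n\ge2$ (by Young's inequality), and $\varepsilon_0\varepsilon_1\lesssim\varepsilon_0$ once $\varepsilon_1\le1$, the right-hand side is at most $\varepsilon_0+\Gamma(\varepsilon_1)$ with $\Gamma$ vanishing to order at least $3$. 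The constant in front of $\varepsilon_0$ can be absorbed in the paper's $\lesssim$ convention.

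\textbf{Expected obstacle.} The only delicate point is the rigorous justification of energy conservation at $H^1_x$ regularity, which we would take from the standard theory. The rest is bookkeeping of powers so that the nonlinear contribution is cubic or of higher order in the small parameters.
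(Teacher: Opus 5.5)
Your proposal is correct and follows essentially the same route as the paper: mass and energy conservation, with the potential term bounded by $\|u\|_{L^\infty_x}^{2n}\|u\|_{L^2_x}^2\lesssim\|u\|_{H^1_x}^{2n}\varepsilon_0^2$ via the one-dimensional Sobolev embedding and the bootstrap hypothesis. The paper absorbs one factor $\|u(t)\|_{H^1_x}^2$ into the left-hand side instead of bounding every factor by $\varepsilon_1$, which makes no real difference; your Young's inequality step is superfluous, since $\varepsilon_0\varepsilon_1^n\le\varepsilon_0$ for $\varepsilon_1\le 1$ already gives $\|u\|_{C^0_TH^1_x}\lesssim\varepsilon_0$, and this matches the statement only up to the implicit constant, exactly like the paper's $h_3(\varepsilon_0)$.
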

\begin{proof}
We use a standard argument from \cite{zbMATH02001571}. Because of the specific structure of the polynomial nonlinearity $\sum_{n=1}^{d}\lambda_n|u|^{2n}u$, the energy associated with \eqref{eq:NLS} is simply
\begin{align}
\mathcal{E}(t)=\int_\mathbb{R}\frac{|\partial_xu(t)|^2}{2}+\sum_{n=1}^{d}\lambda_n\frac{|u(t)|^{2n+2}}{2n+2}dx.
\end{align}
The quantity $\mathcal{E}(t)$ is constant, as well is the total mass $||u(t)||_{L^2_x}$. Combining these two conservation laws yields control of the $H^1$ norm. More precisely, 
\begin{align*}
||u(t)||^2_{H^1_x}&\leq \mathcal{E}(t)+||u(t)||^2_{L^2_x}-\int_\mathbb{R}\sum_{n=1}^{d}\lambda_n\frac{|u(t)|^{2n+2}}{2n+2}dx\\
&\leq \mathcal{E}(0)+||u(0)||^2_{L^2_x}-\sum_{n=1}^{d}\lambda_n||u||_{H^1_x}^{2n}\int_\mathbb{R}\frac{|u(t)|^{2}}{2n+2}dx\\
&\leq h_1(\varepsilon_0)+\varepsilon_0^2+h_2(\varepsilon_1)||u(t)||_{H^1_x}^{2}\varepsilon_0^2
\end{align*}
where $h_1$ is a polynomial whose first non-zero coefficient is at least of degree $2$ and $h_2$ is a generic polynomial. For a fixed $\varepsilon_1$ and for $\varepsilon_0$ sufficiently small, this yields 
\begin{align*}
||u||_{C^0_TH^1_x}\leq h_3(\varepsilon_0)
\end{align*}
for some polynomial $h_3$ whose first non-zero coefficient is at least of degree $1$. This concludes the proof.\\
\end{proof}
\begin{lem}
\label{lem:bootstrapflem}
Under the hypotheses of proposition \ref{propal:bootstrap}, one has
$\sum^{2N+1}_{j=0}||t^{-\alpha_j}f||_{C^0_TH^{0,j}_x}\lesssim \varepsilon_0 +\Gamma(\varepsilon_1)$,
for $\Gamma$ a polynomial whose first non-zero coefficient is at least of degree $3$. \\
\end{lem}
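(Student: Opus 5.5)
The plan is to redo the weighted estimate of Lemma \ref{lem:preserve} on the whole interval $[1,T]$, but now with the sharp decay $\|u(s)\|_{L^\infty_x}\lesssim s^{-1/2}\varepsilon_1$ supplied by the bootstrap norm instead of crude bounds. By Plancherel, $\|x^jf(t)\|_{L^2_x}=\|\partial_\xi^j\hat f(t)\|_{L^2_\xi}$. We use the Duhamel formula \eqref{eq:NLSduhamfFourier} in the form of Lemma \ref{lem:phasenoxi}, where the phase $\Psi^n$ does not depend on $\xi$. Differentiating $j$ times in $\xi$ then only hits the $2n+1$ factors of $\mathcal{N}^n$, and gives a sum over $\overrightarrow{m}\in\mathbb{N}^{2n+1}$ with $|\overrightarrow{m}|=j$. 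Undoing the change of variables, each such term is the Fourier transform of $e^{-is\Delta/2}$ applied to a product of $2n+1$ factors $e^{is\Delta/2}(x^{\overrightarrow{m}_k}f)$ or their conjugates. The $j=0$ case is just mass conservation.

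The key step is the estimate of each term. Place in $L^2_x$ the factor carrying the most derivatives, say $\overrightarrow{m}_{2n+1}$, and put the other $2n$ factors in $L^\infty_x$. Two cases arise. If $\overrightarrow{m}_k=0$, the factor is $u$ or $\bar u$ and is bounded by $s^{-1/2}\varepsilon_1$ from the bootstrap norm. If $\overrightarrow{m}_k\ge1$, then $\overrightarrow{m}_k\le j/2$, and we write $e^{is\Delta/2}(x^mf)=(x+is\partial_x)^m u$. We bound it with Lemma \ref{lem:semigroup}:
\begin{align*}
\|e^{is\Delta/2}(x^mf)\|_{L^\infty_x}\lesssim s^{-1/2}\|\partial_\xi^m\hat f\|_{L^\infty_\xi}+s^{-1/2-\delta}\|f\|_{H^{0,m+\gamma}_x}.
\end{align*}
The first term is controlled by expanding $\hat f=e^{-F}\hat w$ and using $\|\hat w\|_{W^{2N,\infty}_\xi}\le\varepsilon_1$ together with $|\partial_\xi^k F|\lesssim \ln(s)\varepsilon_1^2$, which costs only logarithms. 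For $m\le N$ we have $m+\gamma\le j$, so the second term is controlled by $s^{\alpha_{m+1}}\varepsilon_1$, via Lemma \ref{lem:interpol} between the $H^{0,m}$ and $H^{0,m+1}$ norms. In both cases every $L^\infty$ factor is $\lesssim s^{-1/2+\alpha_{j-1}+}\varepsilon_1$ up to logs, and the $L^2$ factor is $\le s^{\alpha_j}\varepsilon_1$. The $n$-th term therefore contributes
\begin{align*}
\varepsilon_1^{2n+1}\int_1^t s^{-n+\alpha_j+2n\alpha_{j-1}+}\,ds .
\end{align*}

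For $n\ge2$ this integral converges, and the contribution is $\lesssim\varepsilon_1^{2n+1}\lesssim t^{\alpha_j}\varepsilon_1^{2n+1}$. The main obstacle is the cubic case $n=1$, where the integrand is $s^{-1+\alpha_j+2\alpha_{j-1}}$ up to logs. The integral is $\lesssim t^{\alpha_j+2\alpha_{j-1}+}/(\alpha_j+\dots)$, which exceeds the allowed $t^{\alpha_j}$. To close it I would use the $\hat w$-bound more carefully when all the non-top factors have $\overrightarrow{m}_k=0$, so that they are exactly $u,\bar u$ at sharp rate $s^{-1/2}$ with no loss. The integral then becomes $\int_1^t s^{-1}\|x^jf\|_{L^2_x}\,ds\lesssim\varepsilon_1^3\int_1^t s^{-1+\alpha_j}\,ds\lesssim\alpha_j^{-1}t^{\alpha_j}\varepsilon_1^3$, which is acceptable. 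In the mixed terms, where some lower factor has $1\le\overrightarrow{m}_k\le j/2$, the loss $s^{\alpha_{\lceil j/2\rceil}+}$ is absorbed because the hierarchy $6\alpha_{j-1}\le\alpha_j$ makes $2\alpha_{j-1}$ smaller than $\alpha_j/3$. To exploit this I would bound those lower factors by Gagliardo–Nirenberg-type interpolation instead of estimating $\|x^{\overrightarrow{m}_k}f\|_{L^1_x}$, so that the total exponent stays strictly below $\alpha_j$ plus a $t^{\alpha_j}$ margin. Integrating then gives $t^{\alpha_j}\varepsilon_1^3$ in this case as well.

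Multiplying by $t^{-\alpha_j}$, summing over $j\le 2N+1$ and $n\le d$, and adding the initial term $\|\partial^j_\xi\hat f_1\|_{L^2_\xi}\le\varepsilon_0$ yields $\varepsilon_0+\Gamma(\varepsilon_1)$, where $\Gamma$ has lowest degree $3$ since every contribution is at least cubic in $\varepsilon_1$.
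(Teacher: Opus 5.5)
Your architecture is the paper's. You use Duhamel in Fourier with the $\xi$-independent phase of Lemma \ref{lem:phasenoxi}, distribute the $j$ derivatives over the $2n+1$ factors, put the most-weighted factor in $L^2_x$ and the rest in $L^\infty_x$, and handle $n\ge2$ by time-integrability. For the cubic term with all weights on one factor you use the sharp decay $\|u(s)\|_{L^\infty_x}\lesssim s^{-1/2}\varepsilon_1$, which gives $\alpha_j^{-1}t^{\alpha_j}\varepsilon_1^3$.

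\textbf{The gap is in the mixed cubic terms.} These are the terms with $n=1$ and the weights split over at least two factors, which you yourself flag as the main obstacle. As written, you still bound the $L^2_x$ factor by $s^{\alpha_j}\varepsilon_1$. The integrand is then $s^{-1+\alpha_j}$ times a loss from the lower factors: a power $s^{\kappa}$ with $\kappa>0$, or at best a power of $\ln s$. Its integral over $[1,t]$ is of size $t^{\alpha_j+\kappa}$, or $t^{\alpha_j}(\ln t)^k$, and neither is $\lesssim t^{\alpha_j}$. No hierarchy among the $\alpha_i$ can repair this, because the excess sits on top of $\alpha_j$ itself. So your claim that ``the total exponent stays strictly below $\alpha_j$'' is unjustified as stated.

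The missing observation is this: in a mixed term the top factor carries $\max_k\overrightarrow{m}_k\le j-1$ weights, so it is bounded by $s^{\alpha_{j-1}}\varepsilon_1$, not $s^{\alpha_j}\varepsilon_1$. This is exactly why the paper writes $\alpha_{\overrightarrow{m}_1}$ for that factor. Only then can $6\alpha_{j-1}\le\alpha_j$ keep the total exponent below $\alpha_j$.

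\textbf{With this fix, your route is sharper than the paper's.} It differs only in how the lower factors are bounded.
\begin{itemize}
\item The paper bounds them by $s^{-1/2}\|x^{m}f\|_{L^1_x}\lesssim s^{-1/2}\|\langle x\rangle^{m+1/2+\delta}f\|_{L^2_x}$ and interpolates. This pays polynomial losses $s^{\alpha_m},s^{\alpha_{m+1}}$, which is why it needs the factor $6$ in the hierarchy.
\item You use Lemma \ref{lem:semigroup} together with $\hat f=e^{-F}\hat w$, $\|\hat w\|_{W^{2N,\infty}_\xi}\le\varepsilon_1$ and $|\partial_\xi^kF|\lesssim\varepsilon_1^2\ln s$. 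This loses only logarithms. Take $\gamma=1$ and $\delta=1/8>\alpha_{2N+1}$, and note that $\overrightarrow{m}_k+1\le j/2+1\le j$ for a non-top factor when $j\ge2$. Then each lower factor with $\overrightarrow{m}_k\ge1$ is $\lesssim s^{-1/2}(1+\ln s)^{\overrightarrow{m}_k}\varepsilon_1$. No interpolation or Gagliardo--Nirenberg step is needed; your remark about replacing $L^1_x$ estimates is moot, since you never used them.
\end{itemize}
The mixed cubic term is then $\lesssim\varepsilon_1^3\int_1^t s^{-1+\alpha_{j-1}}(1+\ln s)^j\,ds\lesssim t^{\alpha_{j-1}}(1+\ln t)^j\varepsilon_1^3\lesssim t^{\alpha_j}\varepsilon_1^3$. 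This needs only $\alpha_{j-1}<\alpha_j$.

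The logarithms would be fatal if the $L^2_x$ factor stayed at $s^{\alpha_j}$. This is another way to see that dropping to $\alpha_{j-1}$ on the top factor is the essential point.
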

\begin{rem}
Observe that a sequence $(\alpha_j)_{0\leq j\leq 2N+1}$ is required and not merely a single general $\alpha$. The reason for this hierarchy becomes clear in the proof that follows.
\end{rem}
\begin{proof}
We want to improve the bound for $||t^{-\alpha_j}f||_{C^0_T H^{0,j}_x}$ for a general $0\leq j \leq 2N+1$. For that, we use a strategy that recalls that of Theorem \ref{unTheorem:localexist} for the local existence and the boundedness of weighted norms, although the argument here is more precise. We start with the evolution equation satisfied by $\hat{f}$
\begin{equation}
\label{eq:recallNLSduhamfFourier}
    \hat{f}(t,\xi)=\hat{u}_\star(\xi)-i\hat{P}(u),
\end{equation}
with 
\begin{equation}
    \hat{P}(u)(t)=\sum_{n=1}^{d}\lambda_n\hat{P}^n(u)(t),
\end{equation}
and differentiate it. Using notation \ref{nota:Pfraknota} and Lemma \ref{lem:phasenoxi}, we have 
\begin{align*}
\partial^{j}_\xi\hat{f}(t,\xi)&=\partial^{j}_\xi\hat{u}_\star(\xi)-i\partial^{j}_\xi\hat{P}(\hat{f})(t,\xi)\\
&=\partial^{j}_\xi\hat{u}_\star(\xi)-i\sum_{n=1}^{d}\frac{\lambda_n}{(2\pi)^{n}}\int^t_1\int e^{is\Psi^n(\eta)}\partial^{j}_\xi\mathcal{N}^n(s,\xi,\eta)d\eta_1\dots d\eta_{2n}ds,
\end{align*}
and with notation \ref{nota:Nonlinearity}, we have 
\begin{align*}
\partial^{j}_\xi\mathcal{N}^n=\sum_{\overrightarrow{m}\in\mathbb{N}^{2n+1},|\overrightarrow{m}|=j}\binom{j}{\overrightarrow{m}}\prod^{n}_{k=1}\partial_\xi^{\overrightarrow{m}_{2k+1}}\hat{f}(H^n_{k+1}-\eta_{2k})\partial_\xi^{\overrightarrow{m}_{2k}}\overline{\hat{f}(H^n_{k}-\eta_{2i})}\partial_\xi^{\overrightarrow{m}_{1}}\hat{f}(H^n_1).
\end{align*}
After performing the change of variables in $\eta$ to recover the original unknowns of Notation \ref{nota:Pfraknota}, we obtain
\begin{align*}
\int^t_1\int e^{is\Psi^n(\eta)}\partial^{j}_\xi\mathcal{N}^n(s,\xi,\eta)d\eta_1\dots d\eta_{2n}ds&=\int^t_1\sum_{\overrightarrow{m}\in\mathbb{N}^{2n+1}, |\overrightarrow{m}|=j}\binom{j}{\overrightarrow{m}}\int e^{is\Phi^n(\xi,\eta)}\mathcal{X}^{n}_{\overrightarrow{m}}(s,\xi,\eta)d\eta_1\dots d\eta_{2n}ds
\end{align*}
with 
\begin{align}
\mathcal{X}^{n}_{\overrightarrow{m}}(\xi,\eta)=\partial^{\overrightarrow{m}_{2n+1}}\hat{f}(\xi-\eta_{2n})\partial^{\overrightarrow{m}_{2n}}\overline{\hat{f}(\eta_{2n-1}-\eta_{2n})}\prod^{n-1}_{i=1}\partial^{\overrightarrow{m}_{2i+1}}\hat{f}(\eta_{2i+1}-\eta_{2i})\partial^{\overrightarrow{m}_{2i}}\overline{\hat{f}(\eta_{2i-1}-\eta_{2i})}\partial^{\overrightarrow{m}_{1}}\hat{f}(\eta_1).
\end{align}
Then, distributing the phase so as to recover the Schrödinger propagator acting on each $\hat{f}$ in the product, and using Plancherel's identity, leads to
\begin{align*}
||\frac{1}{(2\pi)^{n}}\int^t_1\int e^{is\Psi^n(\eta)}\partial^{j}_\xi\mathcal{N}^n(s,\xi,\eta)d\eta_1\dots d\eta_{2n}ds||_{L^2_\xi}&=||\int^t_1\sum_{\overrightarrow{m}\in\mathbb{N}^{2n+1}, |\overrightarrow{m}|=j}\binom{j}{\overrightarrow{m}}e^{-is\partial^2_{xx}}\mathcal{Y}^{n}_{\overrightarrow{m}}(s,x)ds||_{L^2_x}
\end{align*}
with 
\begin{align}
\mathcal{Y}^{n}_{\overrightarrow{m}}(s,x)=\prod^{n}_{i=1}e^{is\partial^2_{xx}}(x^{\overrightarrow{m}_{2i+1}}f(s,x))\overline{e^{is\partial^2_{xx}}(x^{\overrightarrow{m}_{2i}}f(s,x)})e^{is\partial^2_{xx}}(x^{\overrightarrow{m}_{1}}f(s,x)).
\end{align}
Observe that if all derivatives fall on one $\hat{f}$ (or equivalently if all weights in $x$ are on one $f$), that is, if $\overrightarrow{m}:=(j,\dots,0)$, $ \overrightarrow{m}:=(0,\dots,j)$ or $\overrightarrow{m}:=(0,\dots,j,\dots,0)$, then 
\begin{align*}
||\int^t_1e^{-is\partial^2_{xx}}\mathcal{Y}^{n}_{\overrightarrow{\mu}}(s)ds||_{L^2_x}&\leq \int^t_1||\langle x\rangle^{j}f(s)||_{L^2_x}||u||^{2n}_{L^\infty_x}ds\\
&\leq \int^t_1t^{\alpha_j}||t^{-\alpha_j}\langle x\rangle^{j}f(s)||_{L^2_x}\varepsilon^{2n}_1t^{-n}ds\\
&\leq t^{\alpha_j-n+1}\varepsilon^{2n+1}_1.
\end{align*}
If only this contribution were present, the bootstrap improvement would hold without any hierarchy among the exponents $(\alpha_j)_{0\leq j\leq 2N+1}$. Moreover, one sees that the limiting case preventing $\alpha_j$ to be $0$ is $n=1$. \\\\
For a general $\overrightarrow{m}\in\mathbb{N}^{2n+1}$, we set $\overrightarrow{m}_1=\max_{0\leq j\leq 2N+1}\overrightarrow{m}_j$ without loss of generality, and compute
\begin{align*}
||\int^t_1e^{-is\partial^2_{xx}}\mathcal{Y}^{n}_{\overrightarrow{m}}(s)ds||_{L^2_x}&\leq \int^t_1||\langle x\rangle^{\overrightarrow{m}_{1}}f(s)||_{L^2_x}\prod^{n}_{i=1}||e^{is\partial^2_{xx}}(x^{\overrightarrow{m}_{2i+1}}f(s))||_{L^\infty_x}||\overline{e^{is\partial^2_{xx}}(x^{\overrightarrow{m}_{2i}}f(s,x)})||_{L^\infty_x}ds\\
&\leq \int^t_1t^{-n}||\langle x\rangle^{\overrightarrow{m}_{1}}f(s)||_{L^2_x}\prod^{n}_{i=1}||\langle x\rangle^{\overrightarrow{m}_{2i+1}}f(s)||_{L^1_x}||\langle x\rangle^{\overrightarrow{m}_{2i}}f(s))||_{L^1_x}ds\\
&\leq \int^t_1t^{-n}||\langle x\rangle^{\overrightarrow{m}_{1}}f(s)||_{L^2_x}\prod^{n}_{i=1}||\langle x\rangle^{\overrightarrow{m}_{2i+1}+\delta+1/2}f(s)||_{L^2_x}||\langle x\rangle^{\overrightarrow{m}_{2i}+\delta+1/2}f(s))||_{L^2_x}ds.
\end{align*}
For $n>1$,l each weighted norm can be controlled by the maximal weight exponent $\alpha_j$, paying for $t^{-\alpha_j}$, which yields
\begin{align*}
||\int^t_1e^{-is\partial^2_{xx}}\mathcal{Y}^{n}_{\overrightarrow{m}}(s)ds||_{L^2_x}&\leq \int^t_1t^{-n+(2n+1)\alpha_j}||t^{-\alpha_j}\langle x\rangle^{j}f(s)||_{L^2_x}\prod^{n}_{i=1}||t^{-\alpha_j}\langle x\rangle^{j}f(s)||_{L^2_x}||t^{-\alpha_j}\langle x\rangle^{j}f(s))||_{L^2_x}ds\\
&\lesssim t^{-n+1+(2n+1)\alpha_j}\varepsilon^{2n+1}_1\\
&\lesssim t^{\alpha_j}\varepsilon^{2n+1}_1
\end{align*}
where we use the fact that $-n+1+2n\alpha_j<0$, which follows from the restriction $\alpha_j<1/4$ from the definition of our sequence $(\alpha_j)_{0\leq j\leq 2N+1}$.\\\\
For $n=1$, we must rely on interpolations\footnote{Without the interpolation, the case $j=2$ does not work for $\overrightarrow{m}_1=\overrightarrow{m}_2=1$ and $\overrightarrow{m}_3=0$ for example.} between weighted norms given by lemma \ref{lem:interpol}. We have
\begin{align*}
||\int^t_1e^{-is\partial^2_{xx}}\mathcal{Y}^{n}_{\overrightarrow{m}}(s)ds||_{L^2_x}&\leq \int^t_1t^{-1}||\langle x\rangle^{\overrightarrow{m}_{1}}f(s)||_{L^2_x}||\langle x\rangle^{\overrightarrow{m}_{2}+\delta+1/2}f(s)||_{L^2_x}||\langle x\rangle^{\overrightarrow{m}_{3}+\delta+1/2}f(s))||_{L^2_x}ds\\
&\lesssim \int^t_1t^{-1+\alpha_{\overrightarrow{m}_1}}\varepsilon_{1}\left(||\langle x\rangle^{\overrightarrow{m}_{2}}f(s)||_{L^2_x}^{1/2-\delta}||\langle x\rangle^{\overrightarrow{m}_{2}+1}f(s)||_{L^2_x}^{1/2+\delta}\right)\\
&\left(||\langle x\rangle^{\overrightarrow{m}_{3}}f(s))||_{L^2_x}^{1/2-\delta}||\langle x\rangle^{\overrightarrow{m}_{3}+1}f(s))||_{L^2_x}^{1/2+\delta}\right)ds\\
&\lesssim \int^t_1t^{-1+\alpha_{\overrightarrow{m}_1}+\alpha_{\overrightarrow{m}_2}(1/2-\delta)+\alpha_{\overrightarrow{m}_2+1}(1/2+\delta)+\alpha_{\overrightarrow{m}_3}(1/2-\delta)+\alpha_{\overrightarrow{m}_3+1}(1/2+\delta)}\varepsilon_{1}^3ds\\
&\lesssim t^{\alpha_j}
\end{align*}
where we use the fact that $\alpha_{\overrightarrow{m}_1}+\alpha_{\overrightarrow{m}_2}(1/2-\delta)+\alpha_{\overrightarrow{m}_2+1}(1/2+\delta)+\alpha_{\overrightarrow{m}_3}(1/2-\delta)+\alpha_{\overrightarrow{m}_3+1}(1/2+\delta)\leq \alpha_j$, which follows from the restriction $\alpha_j\geq 6\alpha_{j-1}$ from the definition of our sequence $(\alpha_j)_{0\leq j\leq 2N+1}$.\\\\
We also remark that for $\lambda_1=0$, i.e., in the absence of cubic terms in \eqref{eq:NLS}, one can show that there is no growth in time for the Sobolev norms of $\hat{f}(t)$. \\\\
Finally, collecting all contributions yields
\begin{align*}
||\partial^{j}_\xi\hat{f}(t)||_{L^2_\xi}\lesssim \varepsilon_0+t^{\alpha_j}\Gamma(\varepsilon_1)
\end{align*}
and so
\begin{align*}
\sum^{2N+1}_{j=0}||t^{-\alpha_j}\partial^{j}_\xi\hat{f}||_{C^0_TL^2_\xi}\lesssim \varepsilon_0+\Gamma(\varepsilon_1)
\end{align*}
which is the desired inequality. 
\end{proof}
\begin{lem}
\label{lem:bootstrapwlem}
Under the hypotheses of proposition \ref{propal:bootstrap}, one has $||\hat{w}||_{C^0_TW^{2N,\infty}_\xi}\leq\varepsilon_0 +\Gamma(\varepsilon_1)$,
for $\Gamma$ a polynomial whose first non-zero coefficient is at least of degree $3$.
\end{lem}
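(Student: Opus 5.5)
We start from the evolution equation \eqref{eq:modifprofileevointro1} for the modified profile, namely
\begin{align*}
\partial_t\hat{w}(t,\xi)=-i\Theta(t,\xi)\mathfrak{R}^1_1(t,\xi)-i\Theta(t,\xi)\mathfrak{R}^{>1}_0(t,\xi),
\end{align*}
which follows from \eqref{eq:showfirstorder}, Lemma \ref{lem:exactformP} with $N=0$ for $n=1$, and Notation \ref{nota:sumtoterror}. The cubic resonant term $-i\lambda_1 t^{-1}|\hat{f}|^2\hat{f}$ is exactly cancelled by the time derivative of the phase $\Theta$. We integrate in time from $1$ to $t\le T$ and differentiate $j\le 2N$ times in $\xi$. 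This gives
\begin{align*}
\|\partial_\xi^j\hat{w}(t)\|_{L^\infty_\xi}\le \|\partial_\xi^j\hat{f}_1\|_{L^\infty_\xi}+\int_1^t \|\Theta\mathfrak{R}^1_1(s)\|_{W^{2N,\infty}_\xi}+\sum_{n\ge2}|\lambda_n|\,\|\Theta\mathfrak{R}^n_0(s)\|_{W^{2N,\infty}_\xi}\,ds.
\end{align*}
The initial term is bounded by $\|f_1\|_{H^{0,2N+1}_x}\lesssim\varepsilon_0$ through the embedding $\langle x\rangle^{-(2N+1)}L^2\subset$ weighted $L^1$, i.e.\ $\|\partial^j\hat f\|_{L^\infty}\lesssim\|x^jf\|_{L^1}\lesssim\|f\|_{H^{0,j+1}}$.

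The key step is to apply the second part of Proposition \ref{propal:controlerror}, with $q=2N$, to each remainder.

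\emph{Cubic remainder.} For $\mathfrak{R}^1_1$ we have $r=1$, so $p=2r+q-2=2N$. The required hypothesis is $\|t^{-\alpha}\langle x\rangle^{2N+1}f\|_{L^\infty_tL^2_x}\le\varepsilon_1$, which the bootstrap norm supplies with $\alpha=\alpha_{2N+1}<1/(16d+8)$. We also need $\|\hat w\|_{W^{2N,\infty}}\le\varepsilon_1$, again from the bootstrap norm. The proposition then gives $\|\Theta\mathfrak{R}^1_1(s)\|_{W^{2N,\infty}}\lesssim s^{-1-\delta}h(\varepsilon_1)\varepsilon_1^3$.

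\emph{Higher-order remainders.} For $\mathfrak{R}^n_0$ with $n\ge2$ we have $r=0$, so $p=q=2N$, which needs fewer weights. The proposition gives the bound $s^{-n+1-\delta}h(\varepsilon_1)\varepsilon_1^{2n+1}\le s^{-1-\delta}h(\varepsilon_1)\varepsilon_1^{2n+1}$.

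Both bounds are integrable on $[1,\infty)$, uniformly in $T$. The sum is therefore $\Gamma(\varepsilon_1)=\sum_n h(\varepsilon_1)\varepsilon_1^{2n+1}$, whose lowest-degree term is at least cubic. This yields the claim $\|\hat w\|_{C^0_TW^{2N,\infty}_\xi}\le\varepsilon_0+\Gamma(\varepsilon_1)$.

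\textbf{Main obstacle.} The delicate point is checking that the derivatives landing on $\Theta$ do not destroy the integrability. Each $\partial_\xi^k\Theta$ produces factors like $\ln(s)^k$ times powers of $\|\hat w\|_{W^{k,\infty}}$. This requires the identity $|\hat f|=|\hat w|$ and the bootstrap control of $\hat w$ in $W^{2N,\infty}$, not of $\hat f$; the latter grows like $t^{\alpha}$ and would only give weaker bounds. These logarithms are absorbed into the margin $s^{-\delta}$ after a slight decrease of $\delta$, as in the proof of Proposition \ref{propal:controlerror}.

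A second point to verify is that the derivative count matches: with $q=2N$ and $r=1$, the proposition needs exactly $2N+1$ weights, which is the top weight controlled in the bootstrap. This is why $\hat w$ can only be propagated in $W^{2N,\infty}$ and not in higher norms.
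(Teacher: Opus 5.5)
Your proposal is correct and follows essentially the same route as the paper. You integrate $\partial_t\hat w=-i\Theta\mathfrak{R}^1_1-i\Theta\mathfrak{R}^{>1}_0$ from $1$ to $t$ and apply the second part of Proposition~\ref{propal:controlerror} with $p=2N$, $C=\varepsilon_1$, $\alpha=\alpha_{2N+1}$, obtaining an integrable $s^{-1-\delta}$ bound that is at least cubic in $\varepsilon_1$; your explicit estimate $\|\partial_\xi^j\hat f_1\|_{L^\infty_\xi}\lesssim\|f_1\|_{H^{0,j+1}_x}$ for the initial term fills in a step the paper leaves implicit.
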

\begin{proof}
The modified profile satisfies 
\begin{equation}
\label{eq:modifprofileforbootstrap}
  \hat{w}(t,\xi)=\hat{w}(1,\xi)-i\int^t_1\Theta(t,\xi)\mathfrak{R}^1_{1}(s,\xi)+\Theta(t,\xi)\mathfrak{R}^{>1}_0(s,\xi)ds.
\end{equation}
We recall proposition \ref{propal:controlerror}. Given $p\in\mathbb{N}$, if 
\begin{align*}
||t^{-\alpha}\langle x\rangle^{p+1} f||_{L^\infty_tL^2_x}+||\hat{w}||_{L^\infty_tW^{p,\infty}_\xi}\leq C
\end{align*}
for some $1/(16d+8)>\alpha>0$ and some $C>0$, then there exists $1>\delta>0$, a polynomial $h$, such that for $s\geq 1$, the error terms satisfy $||\Theta\mathfrak{R}^n_0(s)||_{W^{p,\infty}_\xi}\lesssim h(C)C^{2n+1}s^{-n+1-\delta}$, and for $r>0$, $||\Theta\mathfrak{R}^n_r(s)||_{W^{p+2-2r,\infty}_\xi}\lesssim h(C)C^{2n+1}s^{-n-r+1-\delta}$. In the right hand side of \eqref{eq:modifprofileforbootstrap} appears the term $\Theta\mathfrak{R}^1_1$ and a finite sum of $\Theta\mathfrak{R}^n_0$ terms for $n>1$ contained in $\mathfrak{R}^{>1}_0$, see notation \ref{nota:sumtoterror}. Then, using proposition \ref{propal:controlerror} cited above with $p=2N$, $C=\varepsilon_1$ and $\alpha=\alpha_{2N+1}$ leads to
\begin{align*}
  ||\hat{w}(t)||_{W^{2N,\infty}_\xi}\lesssim||\hat{w}(1,\xi)||_{W^{2N,\infty}_\xi}+\int^t_1\Gamma(\varepsilon_1)s^{-1-\delta}ds
\end{align*}
for some polynomial $\Gamma$ whose first non-zero coefficient is at least of degree $3$. This implies that
\begin{align*}
  ||\hat{w}||_{C^0_TW^{2N,\infty}_\xi}\lesssim \varepsilon_0+\Gamma(\varepsilon_1).
\end{align*}
\end{proof}
\begin{lem}
Under the hypotheses of proposition \ref{propal:bootstrap}, one has 
$||t^{1/2}u||_{C^0_TL^\infty_x}\lesssim \varepsilon_0 +\Gamma(\varepsilon_1)$,
for $\Gamma$ a polynomial whose first non-zero coefficient is at least of degree $3$. 
\end{lem}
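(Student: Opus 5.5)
We follow the proof of Corollary \ref{cor:decay} almost verbatim, now on $[1,T]$ and with the bootstrap bounds already improved. Write $u(t)=e^{it\Delta/2}f(t)$ and apply Lemma \ref{lem:semigroup} with $\gamma=1$ and some $\delta$ satisfying $\alpha_1<\delta<1/4$. This choice is possible because $\alpha_1<\alpha_{2N+1}<1/(16d+8)<1/4$. It gives, for $t\in[1,T]$,
\begin{align*}
t^{1/2}\|u(t)\|_{L^\infty_x}\lesssim \|\hat{f}(t)\|_{L^\infty_\xi}+t^{-\delta}\|f(t)\|_{H^{0,1}_x}.
\end{align*}

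For the first term we use the gauge invariance $|\hat{f}(t,\xi)|=|\hat{w}(t,\xi)|$, since $\Theta$ has modulus one. Lemma \ref{lem:bootstrapwlem} then yields
\begin{align*}
\|\hat{f}(t)\|_{L^\infty_\xi}\leq\|\hat{w}\|_{C^0_TW^{2N,\infty}_\xi}\lesssim\varepsilon_0+\Gamma(\varepsilon_1).
\end{align*}
For the second term we write
\begin{align*}
t^{-\delta}\|f(t)\|_{H^{0,1}_x}=t^{\alpha_1-\delta}\,\|t^{-\alpha_1}f(t)\|_{H^{0,1}_x}.
\end{align*}
Here $t^{\alpha_1-\delta}\leq1$ for $t\geq1$. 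Lemma \ref{lem:bootstrapflem}, with $j=1$, bounds the remaining factor by $\varepsilon_0+\Gamma(\varepsilon_1)$. Combining the two bounds and taking the supremum over $t\in[1,T]$ gives the claim.

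The one delicate point is that we must not bound the right-hand side by the a priori quantity $\|u\|_{X_T}<\varepsilon_1$. That would only return $\varepsilon_1$, which is linear, and the bootstrap would not close. This is exactly why we feed in the already improved estimates of Lemmas \ref{lem:bootstrapflem} and \ref{lem:bootstrapwlem}. Their output has the form $\varepsilon_0+\Gamma(\varepsilon_1)$, with $\Gamma$ of degree at least $3$, and that form is preserved under addition.

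The only other check is that $\delta$ can be chosen as required by Lemma \ref{lem:semigroup}, namely $\gamma=1>1/2+2\delta$, while also $\delta>\alpha_1$. Both conditions hold whenever $\alpha_1<\delta<1/4$.
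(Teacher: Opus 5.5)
Your proposal is correct and matches the paper's proof essentially line by line. You apply Lemma \ref{lem:semigroup} with $\gamma=1$ and $\alpha_1<\delta<1/4$, replace $\|\hat f\|_{L^\infty_\xi}$ by $\|\hat w\|_{L^\infty_\xi}$ via $|\Theta|=1$, and close with the improved bounds of Lemmas \ref{lem:bootstrapwlem} and \ref{lem:bootstrapflem}; you also write the weight correctly as $t^{-\alpha_1}f$, where the paper's displayed line has the typo $t^{\alpha_1}f$.
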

\begin{proof}
From lemma \ref{lem:semigroup}, we know that
\begin{align*}
||t^{1/2}u||_{L^\infty_x}&=||t^{1/2}e^{it\Delta/2}f||_{L^\infty_x}\\
&\lesssim ||\hat{f}||_{L^\infty_\xi}+t^{-\delta}||f||_{H^{0,1}_x}\\
&\leq ||\hat{w}||_{L^\infty_\xi}+t^{\alpha_1-\delta}||t^{\alpha_1}f||_{H^{0,1}_x}
\end{align*}
for $1/4>\delta>\alpha_1$. Then, applying the lemmas \ref{lem:bootstrapflem} and \ref{lem:bootstrapwlem}, we get the desired result. 
\end{proof}
We are now ready to state the global existence result.
\begin{propal}
\label{propal:GWP}
For $\varepsilon_0>0$ sufficiently small, let $||u_1||_{H^{1,0}_x}+\sum^{2N+1}_{j=0}||e^{-i\Delta/2}u_1||_{H^{0,j}_x}<\varepsilon_0$. Then, there exists a global solution $u\in C([1,\infty],H^{1,0}_x)$ to \eqref{eq:NLS}. Moreover, there exists $C_0>$ that only depends on $\varepsilon_0$ such that
\begin{align}
\label{eq:GWPeq}
||u||_{X_\infty}<C_0.
\end{align}
\end{propal}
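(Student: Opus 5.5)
This is a standard continuity argument that combines the local theory of Theorem \ref{unTheorem:localexist} (with Corollaries \ref{cor:modprof} and \ref{cor:decay}) and the a priori estimate of Proposition \ref{propal:bootstrap}.

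First, fix $\varepsilon_1>0$ small, depending only on the implicit constant $K$ in Proposition \ref{propal:bootstrap} and on the polynomial $\Gamma$. Choose it so that $K\Gamma(\varepsilon_1)\leq \varepsilon_1/4$. This is possible because $\Gamma$ vanishes to order at least $3$ at $0$, so $\Gamma(\varepsilon_1)=O(\varepsilon_1^3)$. Then take $\varepsilon_0$ so small that $K\varepsilon_0\leq\varepsilon_1/4$ and that the smallness requirements of Proposition \ref{propal:bootstrap} hold. We also need $C_X(C\varepsilon_0)<\varepsilon_1$, where $C_X$ is the function from Theorem \ref{unTheorem:localexist}; this is possible since $\lim_{\sigma\to0}C_X(\sigma)=0$.

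Applying Theorem \ref{unTheorem:localexist} at $T_\star=1$, together with Corollaries \ref{cor:modprof} and \ref{cor:decay}, gives a solution on $[1,1+T]$ with $\|u\|_{X_{1+T}}<\varepsilon_1$. Note that when $T_\star=1$, $\hat\omega=\hat w$ by Remark \ref{rem:usecor}.

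Define
\[
T^\ast:=\sup\{T'\geq1:\ \text{the solution exists on }[1,T']\text{ and }\|u\|_{X_{T'}}<\varepsilon_1\},
\]
which is at least $1+T$. Suppose $T^\ast<\infty$. For every $T'<T^\ast$, Proposition \ref{propal:bootstrap} gives $\|u\|_{X_{T'}}\leq K\varepsilon_0+K\Gamma(\varepsilon_1)\leq\varepsilon_1/2$. By continuity of each component of the $X_T$ norm in time (the $C^0_T$ norms are sups of continuous functions), this bound persists up to $T^\ast$.

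Next, restart the local theory at $T_\star$ close to $T^\ast$, say $T_\star=T^\ast-T/2$. We must check its hypothesis on $u(T_\star)$, namely that
\[
\|u(T_\star)\|_{H^{1,0}_x}+\sum_j \|T_\star^{-\alpha_j}f(T_\star)\|_{H^{0,j}_x}
\]
is less than the $\varepsilon$ allowed by Theorem \ref{unTheorem:localexist}. This quantity is bounded by $2\|u\|_{X_{T_\star}}\lesssim\varepsilon_1/2$, so it suffices that $\varepsilon_1$ was also chosen below that threshold. The key point is that the local existence time $T$ in Theorem \ref{unTheorem:localexist} is independent of $T_\star$. That theorem extends the solution to $[T_\star,T_\star+T]\ni T^\ast+T/2$, with weighted profile bounds $C_X(\varepsilon_1)$ there. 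Corollary \ref{cor:modprof} and Remark \ref{rem:usecor} control $\hat w$ on the extended interval, since $\hat w$ differs from $\hat\omega$ only by a unimodular phase fixed at $T_\star$, whose $\xi$-derivatives are controlled by $\hat w$ on $[1,T_\star]$. Corollary \ref{cor:decay} controls $t^{1/2}u$.

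The main obstacle is to make sure the resulting $X$ norm on the extended interval is still $<\varepsilon_1$. The norm is a sup over $[1,T']$, which we split as a sup over $[1,T_\star]$, bounded by $\varepsilon_1/2$, and a sup over $[T_\star,T_\star+T]$, bounded by roughly $C_X(\varepsilon_1/2)$. If $C_X(\varepsilon_1/2)$ is not obviously below $\varepsilon_1$, I would instead use continuity only. The solution is continuous in $H^1$. The weighted norms and $\hat w$ are continuous in time by the local construction. Hence the strict bound $\varepsilon_1/2<\varepsilon_1$ at $T^\ast$ extends to $\|u\|_{X_{T^\ast+\tau}}<\varepsilon_1$ for some small $\tau>0$, which contradicts the maximality of $T^\ast$.

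Therefore $T^\ast=\infty$, and $u\in C([1,\infty),H^1_x)$ with $\|u\|_{X_\infty}\leq\varepsilon_1/2=:C_0$, which depends only on $\varepsilon_0$ through the choice above. Uniqueness follows from the $H^1$ uniqueness in Lemma \ref{lem:cazenave}.
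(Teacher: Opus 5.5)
Your proposal is correct and follows the same route as the paper, whose proof is only a two-line sketch: local existence at $T_\star=1$ via Theorem~\ref{unTheorem:localexist}, the a priori bound of Proposition~\ref{propal:bootstrap}, and continuation using the $T_\star$-independent local existence time. Your write-up fills in the continuity argument the paper leaves implicit, and closing it through continuity of $t\mapsto\|u\|_{X_t}$, rather than through the quantitative bound $C_X$ on the extended interval, is the safer choice, because it only needs each component of the norm to be finite and continuous there, not small.
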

\begin{proof}
By Theorem \ref{unTheorem:localexist}, there exists a local solution $u\in C([1,T],H^{1,0}_x)$ for some $T>0$ with the corresponding bound 
\begin{align}
||u||_{X_T}\lesssim C_X(\varepsilon_0).
\end{align}
Then, with proposition \ref{propal:bootstrap}, we apply the bootstrap argument to show that, for $\varepsilon_0>0$ sufficiently small, the local existence interval can be extended indefinitely. 
\end{proof}
\section{Asymptotic analysis}
\label{section:asymptotic}
In this section, we derive the expansion of the profile $\hat{f}$, the modified profile $\hat{w}$ and the solution $u$ itself. Because we aim at an expansion of order $N$, we decompose the full nonlinearity into a truncated sum and a remainder term:
\begin{equation}
    i\partial_t\hat{f}(t,\xi)=\sum_{n=1}^{N+1}\lambda_n\hat{\mathfrak{P}}^n(t,\xi)+\mathfrak{R}^{> N+1}_0(t,\xi).
\end{equation}
Then, we expand each contribution $\mathfrak{P}^n$ using the stationary phase, leading to 
\begin{equation}
    i\partial_t\hat{f}(t,\xi)=\sum_{n=1}^{N+1}\lambda_n\sum_{k=0}^{N+1-n}\frac{1}{t^{k+n}}\hat{\mathfrak{P}}^n_k(t,\xi)+\sum_{n=1}^{N+1}\mathfrak{R}^n_{N+2-n}(t,\xi)+\mathfrak{R}^{> N+1}_0(t,\xi),
\end{equation}
where the quantities $\mathfrak{P}^n_k$ are defined in lemma \ref{lem:exactformP}. Reorganizing the expansion according to powers of $\frac{1}{t}$ yields
\begin{equation}
\label{eq:organizedexp}
    i\partial_t\hat{f}(t,\xi)=\sum_{m=1}^{N+1}\frac{1}{t^m}(\sum_{k+n=m,n\geq1}\lambda_n\hat{\mathfrak{P}}^n_k(t,\xi))+\sum_{n=1}^{N+1}\mathfrak{R}^n_{N+2-n}(t,\xi)+\mathfrak{R}^{> N+1}_0(t,\xi).
\end{equation}
As emphasized in the introduction, the case $N=0$ is of particular importance. Indeed, for $N=0$, one obtains
\begin{equation}
    i\partial_t\hat{f}(t,\xi)=-\lambda_1\frac{1}{t}|\hat{f}(t,\xi)|^2\hat{f}(t,\xi)+\mathfrak{R}^1_{1}(t,\xi)+\mathfrak{R}^{>1}_0(t,\xi).
\end{equation}
This motivates the introduction of the modified profile $\hat{w}(t,\xi)=\Theta(t,\xi)\hat{f}(t,\xi)$, with $\Theta(t,\xi)=e^{i\int^t_1\frac{\lambda_1|\hat{f}(s,\xi)|^2}{s}ds}$, which satisfies 
\begin{equation}
    i\partial_t\hat{w}(t,\xi)=\Theta(t,\xi)\mathfrak{R}^1_{1}(t,\xi)+\Theta(t,\xi)\mathfrak{R}^{>1}_0(t,\xi),
\end{equation}
or, for arbitrary order $N$, 
\begin{equation}
\label{eq:organizedexpw}
    i\partial_t\hat{w}(t,\xi)=\sum_{m=2}^{N+1}\frac{1}{t^m}(\sum_{k+n=m,n\geq1}\lambda_n\Theta(t,\xi)\hat{\mathfrak{P}}^n_k(t,\xi))+\sum_{n=1}^{N+1}\Theta(t,\xi)\mathfrak{R}^n_{N+2-n}(t,\xi)+\Theta(t,\xi)\mathfrak{R}^{> N+1}_0(t,\xi).
\end{equation}
Our next goal is to analyze the asymptotic expansion of the modified profile $\hat{w}$ as $t\to+\infty$, and then to translate it back to $\hat{f}$. This procedure provides us the phase correction of the modified scattering. Finally, we transfer the information on the expansion of $\hat{f}$ into the expansion for the solution $u$. \\\\
In each term $\Theta(t,\xi)\hat{\mathfrak{P}}^n_k(t,\xi)$, each occurrence of $\hat{f}(t,\xi)$, $\overline{\hat{f}(t,\xi)}$, or their derivatives in $\xi$, can be replaced by $\hat{w}(t,\xi)$, $\overline{\hat{w}(t,\xi)}$, or their derivatives in $\xi$, at the expense of commutators arising when derivatives fall on $\Theta(t,\xi)$. Indeed, there are $n+1$ occurrences of $\hat{f}(t,\xi)$ and $n$ occurrences of $\overline{\hat{f}(t,\xi)}$, and we know that $\Theta^{-1}(t,\xi)=\overline{\Theta}(t,\xi)$. Thus, equation \eqref{eq:organizedexpw} may be written entirely in terms of $\hat{w}$. This is done in the following lemma. 
\begin{lem}
\label{lem:nonlinearityPexactform}
We have 
\begin{align}
    \Theta(t,\xi)\hat{\mathfrak{P}}^n_k(t,\xi)=\frac{i^k}{k!}\sum_{\overrightarrow{m}\in\mathbb{N}^n,|\overrightarrow{m}|=k}\binom{k}{\overrightarrow{m}}
A^n_{\overrightarrow{m}}(t,\xi).
\end{align}
where  
\begin{align}
\label{eq:Adefined}
    A^n_{\overrightarrow{m}}(t,\xi)=\sum_{(J,K)\in\mathscr{L}_{\overrightarrow{m},\overrightarrow{m}}}\prod^n_{j=1}\binom{\overrightarrow{m}_j}{K_j}\binom{\overrightarrow{m}_j}{J_j}D^{K_{j,1}+\sum_{l=1}^{n}J_{l,2j+1}}\hat{w}(\xi)\overline{D^{K_{j,2}+\sum_{l=1}^{n}J_{l,2j}}\hat{w}(\xi)}D^{\sum_{l=1}^{n}J_{l,1}}\hat{w}(\xi)
\end{align}
for 
\begin{align}
\label{eq:operatorDk}
    D^k=(-1)^k\sum_{l=0}^k\binom{k}{l}(\sum_{j=0}^lB_{l,j}(\overline{\partial_\xi F},\overline{\partial^2_\xi F},\dots,\overline{\partial_\xi^{(l-j+1)}F}))\partial_\xi^{k-l}
\end{align}
with $F(t,\xi)=i\int^t_1\frac{\lambda_1|\hat{f}(s,\xi)|^2}{s}ds$ and where $B_{l,j}$ represents Bell's polynomials.\\
\end{lem}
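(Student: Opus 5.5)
The plan is to start from Lemma \ref{lem:exactformP} together with the exact form of $\partial^{\overrightarrow{m},\overrightarrow{m}}\mathcal{N}^n(t,\xi,0)$ given in Lemma \ref{lem:exactformN}. Multiplying by $\Theta$ gives
\begin{align*}
\Theta\hat{\mathfrak{P}}^n_k=\frac{i^k}{k!}\sum_{|\overrightarrow{m}|=k}\binom{k}{\overrightarrow{m}}\Theta\,\partial^{\overrightarrow{m},\overrightarrow{m}}\mathcal{N}^n(t,\xi,0).
\end{align*}
So it suffices to show that $\Theta\,\partial^{\overrightarrow{m},\overrightarrow{m}}\mathcal{N}^n(t,\xi,0)=A^n_{\overrightarrow{m}}(t,\xi)$. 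Each summand in Lemma \ref{lem:exactformN} is a product of $n+1$ factors of the form $(-\partial_\xi)^a\hat{f}$ and $n$ factors of the form $(-\partial_\xi)^b\overline{\hat{f}}$. The combinatorial weights are identical to those in \eqref{eq:Adefined}, so the proof reduces to a factor-by-factor identity.

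Since $\Theta=e^{F}$ with $F$ purely imaginary, we have $\Theta^{-1}=\overline{\Theta}=e^{\overline F}$. We use this to write $\Theta=\Theta^{n+1}\overline{\Theta}^{\,n}$ and distribute one copy of $\Theta$ to each unconjugated factor and one copy of $\overline{\Theta}$ to each conjugated factor. The claim then becomes the pair of identities
\begin{align*}
\Theta\,(-\partial_\xi)^a\hat f=D^a\hat w,\qquad \overline{\Theta}\,(-\partial_\xi)^b\overline{\hat f}=\overline{D^b\hat w}.
\end{align*}
The second follows from the first by complex conjugation, so only the first needs proof.

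For the first identity, write $\hat f=e^{-F}\hat w=e^{\overline F}\hat w$ and apply Leibniz' rule:
\begin{align*}
\partial_\xi^a(e^{\overline F}\hat w)=\sum_{l=0}^a\binom{a}{l}\partial_\xi^l(e^{\overline F})\,\partial_\xi^{a-l}\hat w.
\end{align*}
Next apply Faà di Bruno's formula in its Bell-polynomial form:
\begin{align*}
\partial_\xi^l e^{\overline F}=e^{\overline F}\sum_{j=0}^l B_{l,j}\bigl(\partial_\xi\overline F,\dots,\partial_\xi^{l-j+1}\overline F\bigr).
\end{align*}
Multiplying by $\Theta=e^{F}$ cancels the factor $e^{\overline F}$, since $F+\overline F=0$. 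The sign $(-1)^a$ factors out in front, and what remains is exactly the operator $D^a$ of \eqref{eq:operatorDk} applied to $\hat w$. The convention $B_{0,0}=1$ and $B_{l,0}=0$ for $l>0$ handles the $l=0$ term.

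The main obstacle is purely bookkeeping. One has to check that the exponent pairing in Lemma \ref{lem:exactformN} transfers exactly to \eqref{eq:Adefined}: the $\hat f$-factor carries $K_{j,1}+\sum_l J_{l,2j+1}$ derivatives, the $\overline{\hat f}$-factor carries $K_{j,2}+\sum_l J_{l,2j}$, and the last factor carries $\sum_l J_{l,1}$. One also has to check that the numbers of conjugated and unconjugated factors are $n$ and $n+1$, so that the powers of $\Theta$ and $\overline\Theta$ combine to exactly one $\Theta$. A further point is that the conjugated factors require $\overline{\partial_\xi^k F}=\partial_\xi^k\overline F=-\partial_\xi^kF$. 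This sign convention has to be applied consistently in the Bell polynomial arguments, so that $D$ is defined with $\overline{\partial F}$ and conjugating gives the right operator on $\overline{\hat w}$.
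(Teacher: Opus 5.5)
Your proposal is correct and follows the paper's proof. The paper also writes $\hat f=\Theta^{-1}\hat w=e^{\overline F}\hat w$ and applies Leibniz plus Faà di Bruno to get $(-\partial_\xi)^k\hat f=\Theta^{-1}D^k\hat w$ and $(-\partial_\xi)^k\overline{\hat f}=\overline{\Theta^{-1}D^k\hat w}$; it then plugs these into Lemma~\ref{lem:exactformN}, where the $n+1$ unconjugated and $n$ conjugated factors leave a net $\Theta^{-1}$ that the prefactor $\Theta$ cancels.
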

\begin{rem}
\label{rem:nonlinearityPexactform}
In particular, each occurrence of $\hat{f}$ in $F$ and its derivatives in $\xi$ (entering in the definition of $
A_{\overrightarrow{m}}$ in lemma \ref{lem:nonlinearityPexactform}) can be replaced by an occurrence of $\hat{w}$, since $|\hat{f}|^2=|\hat{w}|^2$.
\end{rem}
\begin{proof}
Recall that 
\begin{align}
    \hat{\mathfrak{P}}^n_k(t,\xi)=\frac{i^k}{k!}\sum_{\overrightarrow{m}\in\mathbb{N}^n,|\overrightarrow{m}|=k}\binom{k}{\overrightarrow{m}}
\partial^{\overrightarrow{m},\overrightarrow{m}}\mathcal{N}^n(t,\xi,0),
\end{align}
where the explicit form of $\partial^{\overrightarrow{m},\overrightarrow{m}}\mathcal{N}^n(t,\xi,0)$ is given in Lemma \ref{lem:exactformN}. By definition, $\hat{f}(t,\xi)=\Theta^{-1}(t,\xi)\hat{w}(t,\xi)=e^{\overline{F(t,\xi)}}\hat{w}(t,\xi)$. Hence, for all $k\in\mathbb{N}$, we deduce that
\begin{align*}
\partial^k_\xi\hat{f}(t,\xi)=\sum_{l=0}^k\Theta^{-1}(t,\xi)\binom{k}{l}(\sum_{j=0}^lB_{l,j}(\overline{F'},\overline{F^{\prime\prime}},\dots,\overline{F^{(l-j+1)}}))\partial_\xi^{k-l}\hat{w}(t,\xi),
\end{align*}
where the $B$'s are Bell's polynomials arising from Faà di Bruno's formula. We also have 
\begin{align*}
\partial^k_\xi\overline{\hat{f}(t,\xi)}=\sum_{l=0}^k\Theta(t,\xi)\binom{k}{l}(\sum_{j=1}^lB_{l,j}(F',F^{\prime\prime},\dots,F^{(l-j+1)}))\partial_\xi^{k-l}\overline{\hat{w}(t,\xi)},
\end{align*}
so that if using the operator $D^k$ defined above, 
\begin{align}
&(-\partial)^k_\xi\hat{f}(t,\xi)=\Theta^{-1}(t,\xi)D^k\hat{w}(t,\xi), &(-\partial)^k_\xi\overline{\hat{f}(t,\xi)}=\overline{\Theta^{-1}(t,\xi)D^k\hat{w}(t,\xi)}.
\end{align}
This implies that 
\begin{align*}
    &\Theta(t,\xi)\partial^{\overrightarrow{m},\overrightarrow{m}}\mathcal{N}^n[\hat{w}](\xi,0)\\
    &=\sum_{(J,K)\in\mathscr{L}_{\overrightarrow{\mu},\overrightarrow{\nu}}}\prod^n_{j=1}\binom{\overrightarrow{m}_j}{K_j}\binom{\overrightarrow{m}_j}{J_j}D^{K_{j,1}+\sum_{l=1}^{n}J_{l,2j+1}}\hat{w}(\xi)\overline{(-D)^{K_{j,2}+\sum_{l=1}^{n}J_{l,2j}}\hat{w}(\xi)}D^{\sum_{l=1}^{n}J_{l,1}}\hat{w}(\xi)
\end{align*}
using the formula of lemma \ref{lem:exactformN}. This leads to the desired equality.
\end{proof}
\subsection{Asymptotics for the modified profile}
\label{subsection:asymptoticmodif}
In this section, we derive the asymptotics of the modified profile $\hat{w}$ as $t\to+\infty$.\\
\begin{propal}
\label{propal:bigpropalexpw}
Under the assumption of Theorem \ref{unTheorem:mainTH}, let $u$ be the global solution to \eqref{eq:NLS} given by proposition \ref{propal:GWP}. Then, the modified profile $\hat{w}$ admits the following asymptotic expansion: for every $0\leq l\leq N$
\begin{equation}
    \hat{w}(t,\xi)= \sum^{l}_{j=0}\sum^{2j}_{k=0}\frac{\ln(t)^k}{t^j}\hat{w}_{j,k}(\xi)+r^{l}(t,\xi)
\end{equation}
where $||r^l(t)||_{W^{2(N-l),\infty}_\xi}=O(t^{-l-\beta})$ and where the coefficients $(\hat{w}_{j,k})_{(j,k)\in\mathbb{N}^{2},\;j\leq N,\;k\leq 2j}$ satisfy $\hat{w}_{j,k}\in W^{2(N-j),\infty}_\xi$. 
\end{propal}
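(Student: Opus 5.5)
I will prove the expansion by induction on $l$. The inputs are the evolution equation \eqref{eq:organizedexpw}, the exact form of $\Theta\hat{\mathfrak{P}}^n_k$ in terms of $\hat w$ and $F$ given by Lemma \ref{lem:nonlinearityPexactform}, and the uniform bound $\|u\|_{X_\infty}<C_0$ from Proposition \ref{propal:GWP}. That bound gives $\|t^{-\alpha_j}f\|_{L^\infty_tH^{0,j}_x}\lesssim C_0$ for $j\le 2N+1$ and $\|\hat w\|_{L^\infty_tW^{2N,\infty}_\xi}\lesssim C_0$. These are exactly the hypotheses of Proposition \ref{propal:controlerror} with $p=2N$.

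\emph{Base case $l=0$.} Take \eqref{eq:organizedexpw} with $N=0$, so that $i\partial_t\hat w=\Theta\mathfrak{R}^1_1+\Theta\mathfrak{R}^{>1}_0$. Proposition \ref{propal:controlerror}, applied with $r=1,q=2N$ and with $r=0,q=2N$, gives $\|\partial_t\hat w(t)\|_{W^{2N,\infty}_\xi}\lesssim t^{-1-\delta}$. Hence $\hat w(t)$ is Cauchy in $W^{2N,\infty}_\xi$. Set $\hat w_{0,0}:=\lim_{t\to\infty}\hat w(t)$ and $r^0(t)=-\int_t^\infty\partial_s\hat w\,ds=O(t^{-\delta})$. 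Consequently
\[
F(t,\xi)=i\lambda_1\int_1^t\frac{|\hat w(s,\xi)|^2}{s}\,ds=i\lambda_1|\hat w_{0,0}|^2\ln t+F_{0,0}+O(t^{-\beta})
\]
in $W^{2N,\infty}_\xi$, where $F_{0,0}=i\lambda_1\int_1^\infty s^{-1}(|\hat w|^2-|\hat w_{0,0}|^2)\,ds$.

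\emph{Induction step.} Assume the expansion holds up to order $l-1$ with remainder $O(t^{-(l-1)-\beta})$ in $W^{2(N-l+1),\infty}_\xi$. I will use \eqref{eq:organizedexpw} truncated at order $m\le l+1$, and put every term of order $m\ge l+2$ into the error. There are three kinds of error term:
\begin{enumerate}
\item the stationary-phase remainders $\Theta\mathfrak{R}^n_{l+2-n}$;
\item the higher nonlinearities $\Theta\mathfrak{R}^{>l+1}_0$;
\item the leftover terms $t^{-m}\Theta\hat{\mathfrak{P}}^n_k$ with $m\ge l+2$.
\end{enumerate}
By Proposition \ref{propal:controlerror}, with $q=2(N-l)$ and $2r+q-2\le 2N$, the first two are $O(t^{-l-1-\delta})$ in $W^{2(N-l),\infty}_\xi$. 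For the third, the terms with $m\le l+1$ involve at most $2(m-1)\le 2l$ derivatives of $\hat w$ and of $F$, so they lie in $W^{2(N-l),\infty}_\xi$ with polylog growth. In each such term $A^n_{\vec m}$ I substitute the order-$(l+1-m)$ expansions of $\hat w$ and $\partial^j_\xi\hat w$ given by the induction hypothesis. I substitute the corresponding expansion of $F$ as well: integrating $s^{-1}|\hat w|^2$ gives terms $\ln^{k}t\,t^{-j}$, together with a constant $F_{0,0}$ plus corrections, and $F$ produces one extra power of $\ln t$ through the $\ln t$ in its leading part. Collecting powers of $t$ then gives
\[
\partial_t\hat w=\sum_{m=2}^{l+1}\sum_{k}\frac{\ln(t)^k}{t^m}\,c_{m,k}(\xi)+O(t^{-l-1-\beta}).
\]
Integrating from $t$ to $\infty$, and using $\int_t^\infty s^{-m}\ln^k s\,ds=t^{1-m}P_k(\ln t)$ with $\deg P_k=k$, produces the coefficients $\hat w_{j,k}$ for $j=l$ and the remainder $r^l=O(t^{-l-\beta})$. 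The lower-order coefficients are unchanged, because the expansion is unique.

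The main obstacle is bookkeeping the logarithm powers to show that $k\le 2j$. At order $t^{-m}$, the coefficient $\hat{\mathfrak{P}}^n_k$ with $n+k=m$ carries at most $2k$ derivatives. Each derivative landing on $e^{\overline F}$ contributes at most one factor $\partial_\xi F\sim\ln t$, via the Bell polynomials, so the $\ln$-degree is at most $2k+(\text{degree from }\hat w_{j',k'}\text{ factors})$. I would prove, by induction, that a product of factors of orders $j_i$ has $\ln$-degree at most $2\sum_i j_i$ plus the number of $\partial F$ factors. Since $2k\le 2(m-1)$, integrating a $t^{-m}$ term to reach order $j=m-1$ leaves degree at most $2j$. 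For the regularity claim I will also check that the coefficient at order $j$ uses at most $2j$ derivatives of $\hat w_{0,0}\in W^{2N,\infty}_\xi$, which gives $\hat w_{j,k}\in W^{2(N-j),\infty}_\xi$. Finally, the loss of $\beta$ must be kept uniform; I will shrink $\beta$ at each of the finitely many steps so that it absorbs the polylog factors.
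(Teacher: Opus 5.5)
Your proposal is correct and is essentially the paper's argument. The base case is Lemma~\ref{lem:mainofbasecase} (a Cauchy argument in $W^{2N,\infty}_\xi$ via Proposition~\ref{propal:controlerror}), and the induction step is Lemmas~\ref{lem:expansionF} and~\ref{lem:expansionA} together with Proposition~\ref{propal:mainofassympt}: insert the lower-order expansions of $\hat{w}$ and $F$ into the $A^n_{\overrightarrow{m}}$, integrate \eqref{eq:organizedexpwcompact} from $t$ to $\infty$, and invoke uniqueness; your count of the powers of $\ln(t)$ is more explicit than the paper's.

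One caution on your third category of error terms. Your remainders are $\Theta\mathfrak{R}^n_{l+2-n}$ and $\Theta\mathfrak{R}^{>l+1}_0$, i.e.\ \eqref{eq:organizedexpw} with $N$ replaced by $l$, so that category is empty, and it has to be. A term such as $t^{-l-2}\Theta\hat{\mathfrak{P}}^1_{l+1}$ contains $2(l+1)$ derivatives of $\hat{w}$, so it cannot be bounded in $W^{2(N-l),\infty}_\xi$ from $\|\hat{w}\|_{W^{2N,\infty}_\xi}$ alone. Such terms must stay inside $\mathfrak{R}^n_{l+2-n}$, which Proposition~\ref{propal:controlerror} controls through the weights on $f$ instead.
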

The proof proceeds by induction. First, we establish the existence of the zeroth-order expansion with sufficient control on the derivatives of the expansion coefficients. This is done in lemma \ref{lem:mainofbasecase}. Then, assuming that the expansion exists up to order $(n-1)$ with suitable bounds on the derivatives of the coefficients and the error terms, we construct the $n$-th term of the asymptotic expansion of $\hat{w}$ (for $1\leq n\leq N$) with the corresponding proper control. This is achieved in proposition \ref{propal:mainofassympt}. \\\\
Throughout this section, $u$ denotes the global solution to \eqref{eq:NLS} given by proposition \ref{propal:GWP} under the assumption of Theorem \ref{unTheorem:mainTH}, $f$ is its profile and $\hat{w}$ the associated modified profile. \\
\begin{lem}
\label{lem:mainofbasecase}
There exists $\hat{w}_{0,0}\in W^{2N,\infty}_\xi$ such that 
\begin{align}
\hat{w}(t,\xi)=\hat{w}_{0,0}(\xi)+r^0(t,\xi),
\end{align}
with $r^0(t)\in W^{2N,\infty}_\xi$.
\end{lem}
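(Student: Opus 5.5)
The plan is to integrate the evolution equation for the modified profile between $t$ and $+\infty$ and show that the integral converges in $W^{2N,\infty}_\xi$. By Proposition \ref{propal:GWP}, the global solution satisfies $||u||_{X_\infty}<C_0$. In particular,
\[
\sum_{j=0}^{2N+1}||t^{-\alpha_j}f||_{L^\infty_tH^{0,j}_x}+||\hat{w}||_{L^\infty_tW^{2N,\infty}_\xi}\lesssim C_0 .
\]
The modified profile satisfies
\[
\partial_t\hat{w}(t,\xi)=-i\Theta(t,\xi)\mathfrak{R}^1_1(t,\xi)-i\Theta(t,\xi)\mathfrak{R}^{>1}_0(t,\xi),
\]
where $\mathfrak{R}^{>1}_0=\sum_{n\geq2}\mathfrak{R}^n_0$.

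The first step is to apply the second statement of Proposition \ref{propal:controlerror} with $p=2N$ and $\alpha=\alpha_{2N+1}<1/(16d+8)$. The hypothesis $||t^{-\alpha}\langle x\rangle^{2N+1}f||_{L^\infty_tL^2_x}\lesssim C_0$ holds because $\alpha_{2N+1}$ is the largest exponent. For the term $\mathfrak{R}^1_1$ we take $r=1$, so the relation $2r+q-2=p$ gives $q=2N$. This yields
\[
||\Theta\mathfrak{R}^1_1(s)||_{W^{2N,\infty}_\xi}\lesssim s^{-1-\delta}h(C_0)C_0^3 .
\]
For each $\mathfrak{R}^n_0$ with $n\geq2$ we take $r=0$ and $q=p=2N$, which gives the bound $s^{-n+1-\delta}\leq s^{-1-\delta}$. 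Hence
\[
||\partial_t\hat{w}(s)||_{W^{2N,\infty}_\xi}\lesssim s^{-1-\delta},
\]
which is integrable on $[1,\infty)$.

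Next, for $1\leq t_1<t_2$, we write $\hat{w}(t_2)-\hat{w}(t_1)=\int_{t_1}^{t_2}\partial_s\hat{w}\,ds$. This gives
\[
||\hat{w}(t_2)-\hat{w}(t_1)||_{W^{2N,\infty}_\xi}\lesssim t_1^{-\delta}.
\]
So $(\hat{w}(t))_{t\geq1}$ is Cauchy in the Banach space $W^{2N,\infty}_\xi$ and converges to a limit, which we define to be $\hat{w}_{0,0}\in W^{2N,\infty}_\xi$. We then set
\[
r^0(t)=\hat{w}(t)-\hat{w}_{0,0}=i\int_t^\infty\big(\Theta\mathfrak{R}^1_1+\Theta\mathfrak{R}^{>1}_0\big)(s)\,ds,
\]
which satisfies $||r^0(t)||_{W^{2N,\infty}_\xi}\lesssim t^{-\delta}$. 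This is the base case $l=0$ of Proposition \ref{propal:bigpropalexpw}, with $\beta=\delta$.

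The main point to check is that Proposition \ref{propal:controlerror} applies at the full derivative level $2N$. This requires two things. First, the weighted norm needed, $\langle x\rangle^{2N+1}$, is exactly what the bootstrap controls, with only a small growth $t^{\alpha_{2N+1}}$ that the gain $\delta$ absorbs. Second, the logarithmic growth of $\partial_\xi^k\Theta$ is bounded by $\ln(s)^k$ times powers of $||\hat{w}||_{W^{2N,\infty}_\xi}$. This growth is already absorbed into the polynomial $h$ and a slight reduction of $\delta$ in that proposition, so no extra work is needed beyond checking these exponents.
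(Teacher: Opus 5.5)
Your proposal is correct and follows the paper's proof. You integrate $i\partial_t\hat{w}=\Theta\mathfrak{R}^1_1+\Theta\mathfrak{R}^{>1}_0$ in time, bound the right-hand side by $O(s^{-1-\delta})$ in $W^{2N,\infty}_\xi$ via Proposition \ref{propal:controlerror} (with $p=2N$; $r=1$, $q=2N$ for $\mathfrak{R}^1_1$; $r=0$, $q=2N$ for $\mathfrak{R}^n_0$, $n\geq 2$), and conclude by the Cauchy criterion in $W^{2N,\infty}_\xi$, with your explicit parameter choices and the rate $||r^0(t)||_{W^{2N,\infty}_\xi}\lesssim t^{-\delta}$ making precise what the paper leaves implicit.
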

\begin{proof}
We use \eqref{eq:organizedexpw} at first order $N=0$, that is, 
\begin{equation}
\label{eq:basecase}
    i\partial_t\hat{w}(t,\xi)=\Theta(t,\xi)\mathfrak{R}^1_{1}(t,\xi)+\Theta(t,\xi)\mathfrak{R}^{>1}_0(t,\xi).
\end{equation}
Commuting \eqref{eq:basecase} with $\partial^q_\xi$ for $0\leq q\leq 2N$ and integrating between $s$ and $t\geq s$, proposition \ref{propal:controlerror} yields
\begin{align*}
\partial^q_\xi\hat{w}(t,\xi)-\partial^q_\xi\hat{w}(s,\xi)&=-i\int^t_s\partial^q_\xi(\Theta(\tau,\xi)\mathfrak{R}^1_{1}(\tau,\xi))+\partial^q_\xi(\Theta(\tau,\xi)\mathfrak{R}^{>1}_0(\tau,\xi))d\tau\\
&=\int^t_s O(\tau^{-1-\delta})d\tau\\
&=O(t^{-\delta}),
\end{align*}
so that $(\hat{w}(t))_{t\geq0}$ is a Cauchy sequence in $W^{2N,\infty}_\xi$ and converges to some $\hat{w}_{0,0}\in W^{2N,\infty}_\xi$.
\end{proof}
We now assume that the asymptotic expansion is known up to order $(n-1)$ with proper control on the error and its derivatives.\\ 
\begin{ass}
\label{ass:lowerorderexpansion}
For all $0\leq l\leq(n-1)$, 
\begin{equation}
\label{eq:expansionn-un}
    \hat{w}(t,\xi)= \sum^{l}_{j=0}\sum^{2j}_{k=0}\frac{\ln(t)^k}{t^j}\hat{w}_{j,k}(\xi)+r^{l}(t,\xi)
\end{equation}
where $||r^l(t)||_{W^{2(N-l),\infty}_\xi}=O(t^{-l-\beta})$ and where the coefficients $(\hat{w}_{j,k})_{(j,k)\in\mathbb{N}^{2},\;j\leq n-1,\;k\leq 2j}$ satisfy $\hat{w}_{j,k}\in W^{2(N-j),\infty}_\xi$. \\
\end{ass}
 \begin{rem}
 \label{rem:limitexpansionderivw}
Higher-order terms of the expansion of $\hat{w}$ require control over fewer derivatives. The reason for that is that, for $\hat{w}$ to appear with high-order derivatives in \eqref{eq:organizedexpw}, it must appear in a term of high-order in $t^{-1}$. Concretely, $\partial^{2l}\hat{w}$ only appears in $\mathfrak{P}^k_{l'}$ for $k\geq1$, $l'\geq l$, which itself appears at order $t^{-l-1}$ at least. Therefore, to construct the expansion at order $n$, it suffices to know the expansion of $\partial^{2l}\hat{w}$ up to order $n-l$, with extra control on $2(N-n+l)$ derivatives to later reach the final order $N$. This precisely matches the derivative hierarchy of assumption \ref{ass:lowerorderexpansion}.\\
 \end{rem}

\begin{propal}
\label{propal:mainofassympt}
Under the assumption \ref{ass:lowerorderexpansion}, for every $0\leq l\leq n$ one has
\begin{equation}
    \hat{w}(t,\xi)= \sum^{l}_{j=0}\sum^{2j}_{k=0}\frac{\ln(t)^k}{t^j}\hat{w}_{j,k}(\xi)+r^{l}(t,\xi)
\end{equation}
where $||r^l(t)||_{W^{2(N-l),\infty}_\xi}=O(t^{-l-\beta})$ and where $(\hat{w}_{j,k})_{(j,k)\in\mathbb{N}^{2},\;j\leq n,\;k\leq 2j}$ satisfy $\hat{w}_{j,k}\in W^{2(N-j),\infty}_\xi$. 
\end{propal}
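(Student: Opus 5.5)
The plan is to construct the order-$n$ coefficients by integrating \eqref{eq:organizedexpw} from $t$ to $+\infty$, after substituting the order-$(n-1)$ expansions of Assumption \ref{ass:lowerorderexpansion} into its right-hand side. Only the case $l=n$ is new; the cases $l\leq n-1$ are the assumption itself.

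\textbf{Step 1: truncate and bound the errors.} Write \eqref{eq:organizedexpw} with $N$ replaced by $n$. Its right-hand side is
\begin{align*}
\sum_{m=2}^{n+1}t^{-m}\sum_{k+n'=m}\lambda_{n'}\Theta\hat{\mathfrak{P}}^{n'}_k+\sum_{n'=1}^{n+1}\Theta\mathfrak{R}^{n'}_{n+2-n'}+\Theta\mathfrak{R}^{>n+1}_0 .
\end{align*}
Apply Proposition \ref{propal:controlerror} with $p=2N$, $C=C_0$ from Proposition \ref{propal:GWP}, and $q=2(N-n)$. For $r=n+2-n'$ one checks that $2r+q-2\leq 2N$. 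This gives
\begin{align*}
\|\Theta\mathfrak{R}^{n'}_{n+2-n'}\|_{W^{2(N-n),\infty}_\xi}\lesssim t^{-n-1-\delta}.
\end{align*}
The term $\Theta\mathfrak{R}^{>n+1}_0$ is bounded the same way, with $n'\geq n+2$. All these errors are therefore integrable at order $t^{-n-1-\delta}$ in $W^{2(N-n),\infty}_\xi$.

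\textbf{Step 2: expand the main terms.} By Lemma \ref{lem:nonlinearityPexactform} and Remark \ref{rem:nonlinearityPexactform}, each $\Theta\hat{\mathfrak{P}}^{n'}_k$ with $k+n'=m$ is a polynomial in $\partial^a_\xi\hat w$, $\overline{\partial^a_\xi\hat w}$ and $\partial^b_\xi F$ with $a,b\leq 2k$. Here $F=i\lambda_1\int_1^t|\hat w(s)|^2\,ds/s$. Since this term carries the prefactor $t^{-m}$, each factor only needs its expansion to order $n+1-m$.

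\begin{itemize}
\item For $\partial^a\hat w$ with $a\leq 2k\leq 2(m-1)$, use Assumption \ref{ass:lowerorderexpansion} with $l=n+1-m$. This is the derivative hierarchy of Remark \ref{rem:limitexpansionderivw}. The remainder $r^{n+1-m}$ is controlled in $W^{2(N-n-1+m),\infty}_\xi$, which contains $a+2(N-n)$ derivatives because $a\leq 2m-2$. So the remainder stays bounded by $t^{-n-1+m-\beta}$ after $2(N-n)$ further derivatives.
\item For $F$, insert the expansion of $|\hat w|^2$ and integrate term by term. This gives
\begin{align*}
F=F_{0,1}\ln t+F_{0,0}+\sum_{1\leq j\leq n-1,\;k\leq 2j}\frac{\ln(t)^k}{t^j}F_{j,k}+O(t^{-(n+1-m)-\beta}).
\end{align*}
The constant $F_{0,0}$ comes from $\int_1^\infty$ of the remainders, which converges because they decay like $s^{-1-\beta}$. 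Powers of $\ln t$ appear through $F_{0,1}\ln t$ and its polynomial occurrences in the Bell polynomials.
\end{itemize}

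Multiplying out, the right-hand side becomes $\sum_{m=2}^{n+1}\sum_{k\leq K_m}\ln(t)^k t^{-m}G_{m,k}(\xi)+O(t^{-n-1-\beta})$ in $W^{2(N-n),\infty}_\xi$, where the $G_{m,k}$ are explicit polynomials in the $\hat w_{j,k}$, $F_{j,k}$ and their derivatives.

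\textbf{Step 3: integrate and identify.} Integrate from $t$ to $\infty$, using the formula for $\int_t^\infty \ln(s)^k s^{-m}\,ds$, which is $t^{1-m}$ times a polynomial in $\ln t$ of degree $k$. Since $\hat w\to\hat w_{0,0}$, this gives
\begin{align*}
\hat w(t)=\hat w_{0,0}+\sum_{j=1}^{n}\sum_{k}\frac{\ln(t)^k}{t^j}\tilde w_{j,k}+O(t^{-n-\beta}).
\end{align*}
Uniqueness of asymptotic expansions in the scale $\ln(t)^k t^{-j}$ forces $\tilde w_{j,k}=\hat w_{j,k}$ for $j\leq n-1$. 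This defines $\hat w_{n,k}$, and the regularity $\hat w_{n,k}\in W^{2(N-n),\infty}_\xi$ follows from Step 2.

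\textbf{Main obstacle.} The hardest step is the bookkeeping that the log power satisfies $k\leq 2n$ while the derivative count stays consistent. I would argue by induction: a term at order $t^{-m}$ carries at most $2k\leq 2(m-1)$ derivatives, each derivative on $\Theta$ produces at most one $\ln t$, and the lower-order coefficients already carry at most $2j$ logs. A careful count of these contributions should give at most $2(m-1)$ logs at $t^{-m}$, hence at most $2n$ at $t^{-n}$ after integration. If this naive count overshoots, I would track separately the logs coming from $F_{0,1}\ln t$.
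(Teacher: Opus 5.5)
Your proposal is correct and is essentially the paper's own proof. Both insert the lower-order expansions of $\hat w$ and $F$ (Lemmas \ref{lem:expansionF} and \ref{lem:expansionA}) into \eqref{eq:organizedexpwcompact}, bound $\mathfrak{R}^{n+1}_{\text{tot.}}$ by $O(t^{-n-1-\delta})$ in $W^{2(N-n),\infty}_\xi$ via Proposition \ref{propal:controlerror}, integrate from $t$ to $+\infty$, and identify the old coefficients by uniqueness.

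Your ``naive'' logarithm count already closes, so the fallback is unnecessary: a term of total order $t^{-m-J}$ carries at most $2(m-1)$ logarithms from the $\partial_\xi F$ factors plus $2J$ from the lower-order coefficients, hence at most $2j$ at order $t^{-j}$ after integration. This is more accurate than the bound $k\leq 2|\overrightarrow{m}|$ stated in Lemma \ref{lem:expansionA}, which omits the $2j$ contribution.
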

The proof of proposition \ref{propal:mainofassympt} is given after precision and a series of Lemmas. The basic idea of the proof is to replace all occurrences of $ \hat{w}$ in the RHS of \eqref{eq:organizedexpw} by the $(n-1)$ or lower-order expansion \eqref{eq:expansionn-un}, and then integrate in time to gain one additional order and a smaller remainder.   \\\\
The RHS of \eqref{eq:organizedexpw} is composed of terms contributing to the expansion up to order $n\leq N$, and error terms. We rewrite \eqref{eq:organizedexpw}, replacing $N$ with $n$, in the less precise but more compact form 
\begin{equation}
\label{eq:organizedexpwcompact}
      i\partial_t\hat{w}(t,\xi)=\sum_{m=2}^{n+1}\frac{1}{t^m}\mathfrak{F}^m[\hat{w}](t,\xi)+\mathfrak{R}^{n+1}_{\text{tot.}}(t,\xi),
\end{equation}
where 
\begin{align}
\label{eq:Ffrakequation}
    \mathfrak{F}^m[\hat{w}](t,\xi)=(\sum_{k+l=m,l\geq1}\lambda_l\frac{i^k}{k!}\sum_{\overrightarrow{m}\in\mathbb{N}^l,|\overrightarrow{m}|=k}\binom{k}{\overrightarrow{m}}
A^l_{\overrightarrow{m}}(t,\xi))
\end{align}
and \begin{align}
    \mathfrak{R}^{n+1}_{\text{tot.}}(t,\xi)=\sum_{l=1}^{n+1}\Theta(t,\xi)\mathfrak{R}^l_{n+2-l}(t,\xi)+\Theta(t,\xi)\mathfrak{R}^{>n+1}_0.
\end{align}
 As suggested in remark \ref{rem:nonlinearityPexactform}, we express $\mathfrak{F}^m$ entirely in terms of $\hat{w}$ and not $\hat{f}$. In general, for the exact form of $\mathfrak{F}^m[\hat{w}]$, we refer to lemma \ref{lem:nonlinearityPexactform}. We now describe the expansion of $F(t,\xi)$, the phase shift that appears in the operator $D^k$ defined in \eqref{eq:operatorDk} in Lemma \ref{lem:nonlinearityPexactform}, with respect to the $(n-1)$ or lower order-expansion \eqref{eq:expansionn-un}.\\ 
 \begin{lem}
 \label{lem:expansionF}
Under the assumption \ref{ass:lowerorderexpansion}, for all $0\leq l\leq(n-1)$, we have 
 \begin{align*}
     F(t,\xi)=F_{0,0}(\xi)+\ln(t)F_{0,1}(\xi)+\sum^{l}_{j=1}\sum^{2j}_{k=0}\frac{\ln(t)^{k}}{\tau^{j}}F_{j,k}(\xi)+ r^{l}_F(t,\xi),
 \end{align*}
where $||r^l_F(t)||_{W^{2(N-l),\infty}_\xi}=O(t^{-l-\beta})$ and where $(F_{j,k})_{(j,k)\in\mathbb{N}^{2},\;j\leq n-1,\;k\leq 2j}$ satisfy $F_{j,k}\in W^{2(N-j),\infty}_\xi$. In particular, 
\begin{align}
&F_{0,1}(\xi)=i\lambda_1|\hat{w}_{0,0}(\xi)|^2, &F_{0,0}(\xi)=i\int^\infty_1\frac{\lambda_1|\hat{w}(\tau,\xi)|^2-|\hat{w}_{0,0}(\xi)|^2}{\tau}d\tau.
\end{align}
 \end{lem}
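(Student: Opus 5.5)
Since $F(t,\xi)=i\lambda_1\int_1^t \frac{|\hat{w}(\tau,\xi)|^2}{\tau}d\tau$, the plan is to insert the expansion of Assumption \ref{ass:lowerorderexpansion} into $|\hat{w}|^2=\hat{w}\overline{\hat{w}}$ and integrate term by term in $\tau$. The constant coefficient $|\hat{w}_{0,0}|^2$ produces the logarithm $F_{0,1}\ln(t)$. The difference $|\hat{w}(\tau)|^2-|\hat{w}_{0,0}|^2$ is integrable against $d\tau/\tau$, and integrating it to $+\infty$ gives $F_{0,0}$. The remaining tail integrals $\int_t^\infty$ produce the corrections $F_{j,k}$ with $j\geq1$.

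\textbf{Step 1 (expanding $|\hat{w}|^2$).} Fix $0\leq l\leq n-1$ and write $\hat{w}=S^l+r^l$, where $S^l=\sum_{j\le l}\sum_{k\le 2j}\ln(t)^k t^{-j}\hat{w}_{j,k}$. Expanding the product gives
\[
|\hat{w}(\tau)|^2=\sum_{j\le l}\sum_{k\le 2j}\frac{\ln(\tau)^k}{\tau^j}G_{j,k}+\rho^l(\tau),
\]
with $G_{j,k}=\sum_{j_1+j_2=j,\,k_1+k_2=k}\hat{w}_{j_1,k_1}\overline{\hat{w}_{j_2,k_2}}$.
\begin{itemize}
\item The range $k\le 2j$ is preserved because $k_1\le 2j_1$ and $k_2\le 2j_2$.
\item The regularity is $G_{j,k}\in W^{2(N-j),\infty}_\xi$, since $W^{m,\infty}_\xi$ is an algebra and $W^{2(N-j_i),\infty}_\xi\subset W^{2(N-j),\infty}_\xi$ whenever $j_i\le j$.
\item The remainder $\rho^l$ collects the cross terms with $j_1+j_2>l$ together with $2\mathrm{Re}(S^l\overline{r^l})+|r^l|^2$. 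Each cross term is $O(\ln(\tau)^{4l}\tau^{-l-1})$, hence $O(\tau^{-l-\beta})$ after shrinking $\beta$. The terms involving $r^l$ are $O(\tau^{-l-\beta})$ in $W^{2(N-l),\infty}_\xi$, because $S^l$ is bounded in that space.
\end{itemize}

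\textbf{Step 2 (integrating in time).} Split $\int_1^t=\int_1^\infty-\int_t^\infty$ for everything except the $(0,0)$ term. The $(0,0)$ term gives $\ln(t)\,i\lambda_1|\hat{w}_{0,0}|^2$. All other terms, together with $\rho^l$, are integrable on $[1,\infty)$. Integrating them over $[1,\infty)$ gives $F_{0,0}$; in the statement's formula, $\lambda_1$ should multiply the difference $|\hat{w}|^2-|\hat{w}_{0,0}|^2$. This integral converges in $W^{2N,\infty}_\xi$ by Lemma \ref{lem:mainofbasecase}, but that lemma as stated gives no decay rate for $r^0$. I would take the rate $O(t^{-\delta})$ from its proof, or equivalently from the case $l=0$ of Assumption \ref{ass:lowerorderexpansion}.

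The tail integrals $-\int_t^\infty \ln(\tau)^k\tau^{-j-1}d\tau$ with $j\ge1$ are computed explicitly by repeated integration by parts:
\[
\int_t^\infty \frac{\ln(\tau)^k}{\tau^{j+1}}d\tau=\sum_{i=0}^k \frac{k!}{(k-i)!\,j^{i+1}}\frac{\ln(t)^{k-i}}{t^j}.
\]
This is again of the form $\ln(t)^{k'}t^{-j}$ with $k'\le 2j$, and it defines $F_{j,k}$ as a linear combination of the $G_{j,k''}$ with $k''\ge k$, so $F_{j,k}\in W^{2(N-j),\infty}_\xi$. The tail $\int_t^\infty \rho^l(\tau)\,\tau^{-1}d\tau$ is $O(t^{-l-\beta})$ in $W^{2(N-l),\infty}_\xi$, and we take $r^l_F$ to be it.

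\textbf{Main obstacle.} The only delicate points are uniformity and regularity. Derivatives up to order $2(N-l)$ have to commute with the $\tau$-integral, which is justified by dominated convergence using the $W^{2(N-l),\infty}_\xi$ bounds on $r^l$. The $\ln$ factors have to be absorbed by a slight loss in $\beta$. With these checks the proof is bookkeeping, and the explicit formulas for $F_{0,1}$ and $F_{0,0}$ come out of Step 2 directly.
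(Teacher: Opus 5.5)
Your proposal is correct and follows the same route as the paper. The paper also isolates $\ln(t)F_{0,1}$ and defines $F_{0,0}=i\lambda_1\int_1^\infty(|\hat{w}(\tau)|^2-|\hat{w}_{0,0}|^2)\tau^{-1}\,d\tau$. It then rewrites the rest of $F$ as a tail integral $-\int_t^\infty$, inserts the product expansion of $|\hat{w}|^2$, and integrates $\ln(\tau)^k\tau^{-j}$ explicitly. Its $L^k_j$ antiderivatives reduce to your integration-by-parts identity. Your handling of the remainder $\rho^l$ and of the regularity of the $F_{j,k}$ is more careful than the paper's. You are also right that $\lambda_1$ should multiply the whole difference in $F_{0,0}$, which is how the paper writes it in its proof.
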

 \begin{rem}
The expansion of $F$ holds under the same conditions as those of the expansion of $\hat{w}$ in assumption \ref{ass:lowerorderexpansion}. In particular, remark \ref{rem:limitexpansionderivw} holds for $\hat{w}$ replaced by $F$. 
 \end{rem}
\begin{proof}[Proof of Lemma \ref{lem:expansionF}]
Let $0\leq l\leq(n-1)$. The proof follows by inserting the expansion of $\hat{w}$ up to order $l$ into the definition
\begin{align*}
  F(t,\xi)&=i\int^t_1\frac{\lambda_1|\hat{w}(\tau,\xi)|^2}{\tau}d\tau.
  \end{align*}
  We start by isolating the logarithmic divergence term
\begin{align*}
  F(t,\xi)&=i\int^t_1\frac{\lambda_1(|\hat{w}(\tau,\xi)|^2-|\hat{w}_{0,0}(\xi)|^2)}{\tau}d\tau+\ln(t)F_{0,1}(\xi),
  \end{align*}
where
\begin{align}
F_{0,1}(\xi)=i\lambda_1|\hat{w}_{0,0}(\xi)|^2.
\end{align}
Now, using the expansion for $\hat{w}$ up to order $0$ given in assumption \ref{ass:lowerorderexpansion}, we clearly see that $i\int^t_1\frac{\lambda_1|\hat{w}(\tau,\xi)|^2-|\hat{w}_{0,0}(\xi)|^2}{\tau}d\tau$ converges to some function $F_{0,0}\in W^{2N,\infty}$ as $t\to\infty$. More precisely,
\begin{align}
F_{0,0}(\xi)=i\int^\infty_1\frac{\lambda_1(|\hat{w}(\tau,\xi)|^2-|\hat{w}_{0,0}(\xi)|^2)}{\tau}d\tau.
\end{align}
The $W^{2N,\infty}_\xi$ regularity for $F_{0,1}$ and $F_{0,0}$ follows from the regularity of the zeroth-order expansion of $\hat{w}$, which is the only order involve in their expression.\\\\
To compute the next orders, we write 
\begin{align}
\label{eq:noidea}
  F(t,\xi)&=\ln(t)F_{0,1}(\xi)+F_{0,0}(\xi)-i\int^\infty_t\frac{\lambda_1(|\hat{w}(\tau,\xi)|^2-|\hat{w}_{0,0}(\xi)|^2)}{\tau}d\tau.
  \end{align}
Then, using assumption \ref{ass:lowerorderexpansion} to get the expansion of $\hat{w}$ at order $l$ and plugging it into \eqref{eq:noidea} yields
\begin{align*}
  F(t,\xi)-\ln(t)F_{0,1}(\xi)-F_{0,0}(\xi)=&-i\int^\infty_t\frac{\lambda_1}{\tau}\left(\sum^{l}_{j_1=0}\sum^{2j_1}_{k_1=0}\sum^{l}_{j_2=0}\sum^{2j_2}_{k_2=0}\frac{\ln(\tau)^{k_1+k_2}}{\tau^{j_1+j_2}}\hat{w}_{j_1,k_1}(\xi)\overline{\hat{w}_{j_2,k_2}(\xi)}-|\hat{w}_{0,0}(\xi)|^2\right)d\tau\\
    &-i\int^\infty_t\frac{\lambda_1}{\tau}\left(\sum^{l}_{j=0}\sum^{2j}_{k=0}\frac{\ln(\tau)^{k}}{\tau^{j}}(r^{l}(\tau,\xi)\overline{\hat{w}_{j,k}(\xi)}+\overline{r^{l}(\tau,\xi)}\hat{w}_{j,k}(\xi))\right)d\tau\\
        &-i\int^{\infty}_t\frac{\lambda_1}{\tau}r^{l}(\tau,\xi)\overline{r^{l}(\tau,\xi)}d\tau.
\end{align*}
Thus, we deduce that 
\begin{align*}
 F(t,\xi)-\ln(t)F_{0,1}(\xi)-F_{0,0}(\xi)
    =&\left[-i\lambda_1\left(\sum^{l}_{j_1=0}\sum^{2j_1}_{k_1=0}\sum^{l}_{j_2=0}\sum^{2j_2}_{k_2=0}L^{k_1+k_2}_{j_1+j_2+1}(\tau)\hat{w}_{j_1,k_1}(\xi)\overline{\hat{w}_{j_2,k_3}(\xi)}-\ln(\tau)|\hat{w}_{0,0}(\xi)|^2\right)\right]^{\infty}_t\\
    &+\left[ r'^l_F(\tau,\xi)\right]^{\infty}_t
\end{align*}
where $L^k_{j}(\tau)=\int^\tau\frac{\ln(\sigma)^k}{\sigma^j}d\sigma$ and where $||r'^l_F(\tau)||_{W^{2(N-l),\infty}_\xi}=O(\tau^{-p-\beta})$. Then, for $j\geq1$, explicit computations lead to
   \begin{equation}
   \nonumber
    \begin{cases}
            L^k_{j}(\tau)=\frac{\ln(\tau)^{k+1}}{k+1},\;\; \text{if $j=1$},\\
            L^k_{j}(\tau)=\sum^{k}_{k'=0}(-1)^{k'}\binom{k}{k'}(k')!\frac{\ln(\tau)^{k-k'}}{(1-j)^{k'+1}\tau^{j-1}},\;\; \text{if $j>1$},
    \end{cases}
    \end{equation}
 from which we deduce the form 
\begin{align}
\label{eq:deroulF}
   F(t,\xi)-\ln(t)F_{0,1}(\xi)-F_{0,0}(\xi)
    &=\sum^{l}_{j=1}\sum^{2j}_{k=0}\frac{\ln(t)^{k}}{t^{j}}F_{j,k}(\xi)+r^l_F(t)
\end{align}
where $r^l_F$ satisfy $||r^l_F(t)||_{W^{2(N-l),\infty}_\xi}=O(t^{-l-\beta})$. This is the desired result.
\end{proof}
Now knowing the expansion of $\hat{w}$ up to order $n-1$ by assumption \ref{ass:lowerorderexpansion} and the expansion of $F$ up to the same order as its consequence with lemma \ref{lem:expansionF}, we may expand the RHS of \eqref{eq:organizedexpwcompact} to get the expansion of $\hat{w}$ up to order $n$. \\
\begin{lem}
\label{lem:expansionA}
Suppose that assumption \ref{ass:lowerorderexpansion} holds. Let $2\leq q\leq n+1$. Then, for $1\leq l$, $\overrightarrow{m}\in\mathbb{N}^l$, with $|\overrightarrow{m}|+l=q$ and $p\leq n+1-q$, 
\begin{align}
    A^l_{\overrightarrow{m}}(t,\xi)=\sum_{j=0}^p\sum_{k=0}^{2|\overrightarrow{m}|}\frac{\ln(t)^k}{t^j}(A^l_{\overrightarrow{m}})_{j,k}(\xi)+r^p_{A^l_{\overrightarrow{m}}}(t,\xi)
\end{align}
for a family $((A^l_{\overrightarrow{m}})_{j,k})_{(j,k)\in\mathbb{N}^{2},\;j\leq p,\;k\leq 2j}$ satisfying $(A^l_{\overrightarrow{m}})_{j,k}\in W^{2(N-j-|\overrightarrow{m}|),\infty}_\xi$ and an error $r^p_{A^l_{\overrightarrow{m}}}$ satisfying  \\$||r^p_{A^l_{\overrightarrow{m}}}(t,\xi)||_{W^{2(N-p-|\overrightarrow{m}|),\infty}_\xi}=O(t^{-p-\beta})$.
\end{lem}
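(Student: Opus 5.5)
The plan is to exploit the explicit structure of $A^l_{\overrightarrow{m}}$ from Lemma \ref{lem:nonlinearityPexactform}. By \eqref{eq:Adefined}, $A^l_{\overrightarrow{m}}$ is a finite sum of products of $2l+1$ factors. Each factor has the form $D^{a}\hat{w}$ or $\overline{D^{a}\hat{w}}$, where the total order satisfies $\sum a = 2|\overrightarrow{m}|$ and each $a\leq 2|\overrightarrow{m}|$. By \eqref{eq:operatorDk}, each $D^a\hat{w}$ is a finite sum of terms $B_{l',j'}(\overline{\partial_\xi F},\dots)\,\partial_\xi^{a-l'}\hat{w}$. Since Bell polynomials are polynomials, $A^l_{\overrightarrow{m}}$ is a polynomial expression in $\partial_\xi^{b}\hat{w}$, $\overline{\partial_\xi^b\hat{w}}$ with $b\leq 2|\overrightarrow{m}|$, and in $\partial_\xi^{c}F$, $\overline{\partial_\xi^c F}$ with $1\leq c\leq 2|\overrightarrow{m}|$. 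The logarithm in $F$ enters only through derivatives, so $\partial_\xi^c F = \ln(t)\partial_\xi^cF_{0,1}+\dots$. The key point is that the total number of $\xi$-derivatives in each monomial is exactly $2|\overrightarrow{m}|$. Hence at most $2|\overrightarrow{m}|$ factors of $\partial_\xi^c F$ (with $c\geq1$) can appear, and each carries at most one power of $\ln(t)$ at leading order.

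Next, I would insert the expansions. For $\partial_\xi^b\hat{w}$ with $b\leq 2|\overrightarrow{m}|$, I apply Assumption \ref{ass:lowerorderexpansion} at order $p\leq n+1-q\leq n-1$. I differentiate the expansion $b$ times. Since $r^p\in W^{2(N-p),\infty}_\xi$ with rate $O(t^{-p-\beta})$, the derivative $\partial_\xi^b r^p$ is $O(t^{-p-\beta})$ in $W^{2(N-p)-b,\infty}_\xi\supset W^{2(N-p-|\overrightarrow{m}|),\infty}_\xi$. Likewise the coefficients satisfy $\partial^b_\xi\hat{w}_{j,k}\in W^{2(N-j)-b,\infty}_\xi\subset W^{2(N-j-|\overrightarrow{m}|),\infty}_\xi$. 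The same reasoning applies to $\partial_\xi^cF$ by Lemma \ref{lem:expansionF}, including the extra $\ln(t)F_{0,1}+F_{0,0}$ terms. I then multiply out the finite product. Products of terms $\ln(t)^{k_i}t^{-j_i}$ give $\ln(t)^{\sum k_i}t^{-\sum j_i}$. I keep the terms with $\sum j_i\leq p$ as coefficients $(A^l_{\overrightarrow{m}})_{j,k}$. I put into $r^p_{A^l_{\overrightarrow{m}}}$ all terms with $\sum j_i>p$, together with every product containing at least one remainder. These are $O(t^{-p-1}\ln(t)^K)=O(t^{-p-\beta})$ and $O(t^{-p-\beta}\ln(t)^K)$ respectively. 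In the second case the log losses are absorbed by slightly decreasing $\beta$, which the paper allows without loss of generality. The other factors are bounded in $W^{\cdot,\infty}_\xi$, since the coefficients lie in $L^\infty$-based Sobolev spaces and $W^{k,\infty}$ is an algebra. Regularity of products follows from the same algebra property, with every factor lying in $W^{2(N-j-|\overrightarrow{m}|),\infty}_\xi$ at the relevant order.

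The main obstacle is the bookkeeping of the logarithm powers: showing that the coefficient of $t^{-j}$ carries $\ln(t)^k$ with $k\leq 2|\overrightarrow{m}|+2j$. The statement writes the bound as $2|\overrightarrow{m}|$, and with $j$ in the range this should be read so that the final sum over $q$ matches $k\leq 2n$ in Proposition \ref{propal:mainofassympt}. I would track two separate contributions. First, the purely $\ln(t)F_{0,1}$ contributions: there are at most $2|\overrightarrow{m}|$ of them, because each consumes at least one derivative. Second, the contributions from the order-$j_i$ terms of $\hat{w}$ and $F$, which each carry $\ln(t)^{k_i}$ with $k_i\leq 2j_i$. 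Summing gives $k\leq 2|\overrightarrow{m}|+2\sum j_i$. Since $|\overrightarrow{m}|+l=q\geq2$ and the term appears multiplied by $t^{-q}$ in \eqref{eq:organizedexpwcompact}, the final exponent becomes $t^{-(q+j)}$ with log power at most $2(q-1+j)$. This is consistent with the $\hat{w}_{j',k}$, $k\leq 2j'$, structure after time integration, using the $L^k_j$ formulas from the proof of Lemma \ref{lem:expansionF}. I would carefully verify that the derivative count indeed bounds the number of bare $\ln(t)F_{0,1}$ factors. This follows from the Faà di Bruno structure, where each Bell-polynomial argument $\partial^c_\xi F$ uses $c\geq1$ derivatives.
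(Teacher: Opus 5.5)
Your proposal is correct and follows the paper's route. Like the paper, you insert the expansions of $\hat{w}$ (Assumption \ref{ass:lowerorderexpansion}) and of $F$ (Lemma \ref{lem:expansionF}) into the explicit formula \eqref{eq:Adefined}, multiply out, and use the fact that the operators $D^a$ only involve $\partial_\xi^cF$ with $c\geq1$ to bound the number of $\ln(t)F_{0,1}$ factors by $2|\overrightarrow{m}|$.

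Your log bookkeeping is sharper than the paper's one-line count. The coefficient of $t^{-j}$ can carry up to $\ln(t)^{2|\overrightarrow{m}|+2j}$, because $\hat{w}_{j,k}$ and $F_{j,k}$ themselves carry $\ln(t)^k$ with $k\leq 2j$. Your check that this still gives $k\leq 2j'$ for $\hat{w}_{j',k}$ after integrating against $t^{-q}$ (using $|\overrightarrow{m}|\leq q-1$) is exactly what Proposition \ref{propal:mainofassympt} needs.

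One small fix: the inclusion for the remainder should read $W^{2(N-p)-b,\infty}_\xi\subset W^{2(N-p-|\overrightarrow{m}|),\infty}_\xi$, not $\supset$.
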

\begin{rem}
In Lemma \ref{lem:expansionA}, the index $l$ corresponds to the nonlinearity with power $2l+1$ in the original equation \eqref{eq:NLS}. Consequently, the quantity $A^l_{\overrightarrow{m}}$ contains a product of $2l+1$ factors of $\hat{w}$. The multi-index $\overrightarrow{m}$ is such that $2|\overrightarrow{m}|$ represents the total number of derivatives falling on these $\hat{w}$ factors. We refer to Lemma \ref{lem:nonlinearityPexactform} for the precise definition of $A^l_{\overrightarrow{m}}$. We see that $A^l_{\overrightarrow{m}}$ appears at order $t^{-|\overrightarrow{m}|-l}$ in the equation \eqref{eq:organizedexpwcompact} and therefore must admit an expansion up to order $n+1-|\overrightarrow{m}|-l$ in $W^{2(N-n),\infty}_\xi$. This is reflected in the regularity statement of Lemma \ref{lem:expansionA} for the coefficients $((A^l_{\overrightarrow{m}})_{j,k})_{(j,k)\in\mathbb{N}^{2},\;j\leq p,\;k\leq 2j}$ and the error $r^p_{A^l_{\overrightarrow{m}}}$. The required regularity follows from the assumptions on $\hat{w}$ in assumption \ref{ass:lowerorderexpansion} and the properties of $F$ in lemma \ref{lem:expansionF}. Indeed, $2(N-p-|\overrightarrow{m}|)\geq 2(N-n-1+q-|\overrightarrow{m}|)\geq2(N-n)$, since $|\overrightarrow{m}|=q-l\leq q-1$. We therefore see that $|\overrightarrow{m}|$ is the limiting parameter, as it represents the number of derivatives on $\hat{w}$.
\end{rem}
\begin{proof}[Proof of Lemma \ref{lem:expansionA}]
The proof follows by direct expansion. We use the explicit expression  \eqref{eq:Adefined} that defines $A^l_{\overrightarrow{m}}$, together with the expansion of $\hat{w}$, given in assumption \ref{ass:lowerorderexpansion}, and the expansion for $F$ provided by Lemma \ref{lem:expansionF}. In particular, the operator $D^{2|\overrightarrow{m}|}$ shows that at most  $2|\overrightarrow{m}|$ occurrences of $F$ appear in the product defining $A^l_{\overrightarrow{m}}$. Since each occurrence of $F$ yields at most one logarithmic growth in time, the total number of logarithm of $t$ is at most $2|\overrightarrow{m}|$. 
\end{proof}
\begin{proof}[Proof of proposition \ref{propal:mainofassympt}]
In \eqref{eq:organizedexpwcompact}, the terms $\mathfrak{F}^m[\hat{w}](s,\xi)$, defined by \eqref{eq:Ffrakequation}, can be expanded using the expansion of $A^l_{\overrightarrow{m}}(t,\xi)$ given by Lemma \ref{lem:expansionA}. Integrating \eqref{eq:organizedexpwcompact} between $t$ and $+\infty$, we deduce that 
\begin{align*}
    \hat{w}(t,\xi)&=\hat{w}(+\infty,\xi)+i\int^{+\infty}_t\sum_{m=2}^{n+1}\frac{1}{t^m}\mathfrak{F}^m[\hat{w}](s,\xi)ds+i\int^{+\infty}_t\mathfrak{R}^{n+1}_{\text{tot.}}(s,\xi)ds\\
&=\hat{w}_{0,0}(\xi)+\sum_{j=1}^{n}\sum^{2j}_{k=0}\frac{\ln(t)^k}{t^j}\hat{w}^{\prime}_{j,k}(\xi)+r'^{n}(t,\xi)+r^{\prime\prime n}(t,\xi)\\
&=\hat{w}_{0,0}(\xi)+\sum_{j=1}^{n}\sum^{2j}_{k=0}\frac{\ln(t)^k}{t^j}\hat{w}_{j,k}(\xi)+r^{n}(t,\xi).
\end{align*}
The existence of the expansion coefficients 
$(\hat{w}_{j,k}^\prime)_{(j,k)\in\mathbb{N}^{2},\;j\leq n,\;k\leq 2j}$ satisfying $\hat{w}_{j,k}\in  W^{2(N-j),\infty}_\xi$ follows directly from Lemma \ref{lem:expansionA} under assumption \ref{ass:lowerorderexpansion}. Then, by uniqueness of the asymptotic expansion, the coefficients previously constructed must coincide:
$(\hat{w}_{j,k}^\prime)_{(j,k)\in\mathbb{N}^{2},\;j\leq n-1,\;k\leq 2j}$ must be equal to $(\hat{w}_{j,k})_{(j,k)\in\mathbb{N}^{2},\;j\leq n-1,\;k\leq 2j}$ term by term. Finally, we may define $(\hat{w}_{n,k})_{k\in\mathbb{N},\; k\leq2n}=(\hat{w}_{n,k}^\prime)_{k\in\mathbb{N},\; k\leq2n}$ which provides the expansion coefficients at order $n$. \\\\
From lemma \ref{lem:expansionA} and proposition \ref{propal:controlerror}, there exists $0<\delta<1$ such that $||r'^{n}||_{W^{2(N-n),\infty}_\xi}=O(s^{-n-\delta})$ and $||r^{\prime\prime n}||_{W^{2(N-n),\infty}_\xi}=O(s^{-n-\delta})$, respectively. More precisely, to estimate $\mathfrak{R}^{n+1}_{\text{tot.}}$, we have to control a finite sum of $\Theta\mathfrak{R}^l_k$ with $l+k=n+2$ and $l\geq1$ and $\Theta\mathfrak{R}^l_k$ with $k=0$ and $l>n+1$. Proposition \ref{propal:controlerror} shows that, under the bounds $||\hat{w}||_{L^\infty W^{2N,\infty}_\xi}$ and $||t^{-\alpha_{2N+1}}f||_{L^\infty H^{0,2N+1}_x}$ one can control up to $2(N-n)$ derivatives\footnote{The limiting case is $k=n+1$, for which $2k-2+q=2N$, yielding $q=2N-2n$.} of $\mathfrak{R}^{n+1}_{\text{tot.}}$, which is exactly the regularity required here. Since $l+k\geq n+2$, we obtain $||\mathfrak{R}^{n+1}_{\text{tot.}}(t)||_{W^{2(N-n),\infty}_\xi}=O(t^{-n-1-\delta})$. Then, we lose $t^{-1}$ after the integration in time, which precisely the decay needed for $r^{\prime\prime n}$. Finally, we set $r^{n}:=r'^{n}+r^{\prime\prime n}$, which complete the proof.
\end{proof} 
\subsection{Asymptotics for the profile}
\label{subsection:asymptoticprof}
In this section, we derive the asymptotics of the profile $\hat{f}$ as $t\to+\infty$. \\
\begin{propal}
\label{propal:bigpropalexpf}
Under the assumption of Theorem \ref{unTheorem:mainTH}, let $u$ be the global solution to \eqref{eq:NLS} given by proposition \ref{propal:GWP}. Then, the profile $\hat{f}$ admits the  following expansion: for every $0\leq l\leq N$
\begin{equation}
    \hat{f}(t,\xi)= e^{-i\ln(t)\lambda_1|\hat{w}_{0,0}(\xi)|^2-F_{0,0}(\xi)}\left(\sum_{j=0}^l\sum_{k=0}^{2j}\frac{\ln(t)^{k}}{t^{j}}\hat{f}(\xi)_{j,k}\right)+r^l_{f}(t,\xi)
\end{equation}
where $||r^l_f(t)||_{W^{2(N-l),\infty}_\xi}=O(t^{-l-\beta})$ and where $(\hat{f}_{j,k})_{(j,k)\in\mathbb{N}^{2},\;j\leq N,\;k\leq 2j}$ satisfy $\hat{f}_{j,k}\in W^{2(N-j),\infty}_\xi$. In particular, $\hat{f}_{0,0}=\hat{w}_{0,0}$.
\end{propal}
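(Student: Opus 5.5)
The plan is to use the identity $\hat{f}(t,\xi)=\Theta^{-1}(t,\xi)\hat{w}(t,\xi)=e^{-F(t,\xi)}\hat{w}(t,\xi)$, with $F(t,\xi)=i\int_1^t\frac{\lambda_1|\hat{w}(s,\xi)|^2}{s}ds$. We combine the expansion of $\hat{w}$ from Proposition \ref{propal:bigpropalexpw} with the expansion of $F$ from Lemma \ref{lem:expansionF}. Once Proposition \ref{propal:bigpropalexpw} holds for all orders up to $N$, Assumption \ref{ass:lowerorderexpansion} is available with $n-1=N$. Lemma \ref{lem:expansionF} then gives, for every $0\leq l\leq N$,
\begin{align*}
F(t,\xi)=F_{0,0}(\xi)+\ln(t)F_{0,1}(\xi)+G^l(t,\xi)+r^l_F(t,\xi),\qquad G^l(t,\xi)=\sum_{j=1}^{l}\sum_{k=0}^{2j}\frac{\ln(t)^k}{t^j}F_{j,k}(\xi),
\end{align*}
with $F_{0,1}=i\lambda_1|\hat{w}_{0,0}|^2$ and $\|r^l_F(t)\|_{W^{2(N-l),\infty}_\xi}=O(t^{-l-\beta})$. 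Factoring out $e^{-F_{0,0}-\ln(t)F_{0,1}}=e^{-i\lambda_1\ln(t)|\hat{w}_{0,0}|^2-F_{0,0}}$ leaves $\hat{f}=e^{-i\lambda_1\ln(t)|\hat{w}_{0,0}|^2-F_{0,0}}e^{-G^l-r^l_F}\hat{w}$. It remains to expand $e^{-G^l-r^l_F}\hat{w}$.

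Next we Taylor expand the exponential. The function $G^l$ is $O(t^{-1}\ln(t)^2)$ in $W^{2(N-l),\infty}_\xi$, so we write $e^{-G^l}=\sum_{p=0}^{l}\frac{(-G^l)^p}{p!}+O(t^{-l-1}\ln(t)^{2l+2})$. Each power $(G^l)^p$ is a sum of terms $\ln(t)^k t^{-j}$ with $j\geq p$ and $k\leq 2j$, because $k_i\leq 2j_i$ is additive under products. The factor $e^{-r^l_F}-1$ is $O(t^{-l-\beta})$ in $W^{2(N-l),\infty}_\xi$, since $W^{q,\infty}$ is an algebra and the derivatives of $e^{-r^l_F}$ stay bounded. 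We then multiply by the expansion $\hat{w}=\sum_{j\leq l}\sum_{k\leq 2j}\frac{\ln(t)^k}{t^j}\hat{w}_{j,k}+r^l$. Collecting the terms with $j\leq l$ defines $\hat{f}_{j,k}$ as finite sums of products of $F_{j',k'}$ and $\hat{w}_{j'',k''}$ with $j'+j''\leq j$. In particular $\hat{f}_{0,0}=\hat{w}_{0,0}$, since $G^l$ has no $t^0$ term.

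For regularity, $\hat{f}_{j,k}$ only involves coefficients of order at most $j$, so it lies in $W^{2(N-j),\infty}_\xi$ by the algebra property and the regularity of $F_{j',k'}$ and $\hat{w}_{j',k'}$. Two points need care. First, the coefficients must not depend on $l$. This follows from uniqueness of asymptotic expansions in the scale $\ln(t)^k t^{-j}$, or directly because truncating at a higher $l$ only adds terms of higher order. Second, all remainders must be measured in $W^{2(N-l),\infty}_\xi$. They are products of a bounded function in $W^{2(N-l),\infty}$ with $r^l$, $r^l_F$, or a term $t^{-l-1}\ln(t)^{M}$. Each such product is $O(t^{-l-\beta'})$ for any $\beta'<1$, and we shrink $\beta$ if needed.

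Finally, the common phase factor $e^{-i\lambda_1\ln(t)|\hat{w}_{0,0}|^2-F_{0,0}}$ multiplies the remainder, and its derivatives grow like powers of $\ln(t)$. One must check that the remainder $r^l_f$, which is defined after multiplying by this factor, still satisfies $O(t^{-l-\beta})$ in $W^{2(N-l),\infty}_\xi$. Up to $2(N-l)$ derivatives falling on it produce at most $\ln(t)^{2(N-l)}$ growth, with coefficients bounded because $\hat{w}_{0,0},F_{0,0}\in W^{2N,\infty}_\xi$. This logarithmic loss is absorbed by slightly decreasing $\beta$. I expect this step to be the main obstacle: it is where the logarithmic losses from the oscillating phase have to be tracked and absorbed without shrinking $\beta$ to zero.
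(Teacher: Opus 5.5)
Your proposal is correct and follows essentially the paper's route. The paper isolates the expansion of $\Theta=e^{F}$ in Lemma \ref{lem:expansionTheta}: it factors out $e^{i\ln(t)\lambda_1|\hat{w}_{0,0}|^2+F_{0,0}}$, Taylor-expands the remaining exponential, and inserts the expansion of $F$ from Lemma \ref{lem:expansionF}, applied once Proposition \ref{propal:bigpropalexpw} holds at order $N$. It then writes $\hat{f}=\overline{\Theta}\hat{w}$ and multiplies by the expansion of $\hat{w}$, which is what you do with $e^{-F}$ directly. Your splitting $e^{-G^l-r^l_F}=e^{-G^l}e^{-r^l_F}$ and your explicit accounting of the $\ln(t)$ losses from derivatives of the leading phase are, if anything, more careful than the paper. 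The paper bounds the Taylor remainder using only the zeroth-order expansion of $F$, which a priori gives just $O(t^{-(l+1)\beta})$, and it does not comment on those logarithmic factors.
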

The expansion of order $N$ of the profile $\hat{f}=\overline{\Theta(t,\xi)}\hat{w}(t,\xi)$, with $\Theta(t,\xi)=e^{F(t,\xi)}$, is directly deduced from the expansion of the modified profile $\hat{w}(t,\xi)$ up to order $N$, with proposition \ref{propal:bigpropalexpw}, and the expansion of the phase shift $F$ up to order $N$, with lemma\footnote{We apply the lemma \ref{lem:expansionF}, not under the assumption \ref{ass:lowerorderexpansion}, but under the fact that proposition \ref{propal:bigpropalexpw} holds at order $N$.} \ref{lem:expansionF}.\\
\begin{lem}
\label{lem:expansionTheta}
For every $0\leq l\leq N$, we have
\begin{equation}
\Theta(t,\xi)=e^{i\ln(t)\lambda_1|\hat{w}_{0,0}(\xi)|^2+F_{0,0}(\xi)}(\sum_{j=0}^l\sum_{k=0}^{2j}\frac{\ln(t)^{k}}{t^{j}}\Theta_{j,k}(\xi))+r^l_{\Theta}(t,\xi)
\end{equation}
where $||r^l_\Theta(t)||_{W^{2(N-l),\infty}_\xi}=O(t^{-l-\beta})$ and where $(\Theta_{j,k})_{(j,k)\in\mathbb{N}^{2},\;j\leq N,\;k\leq 2j}$ satisfy $\Theta_{j,k}\in W^{2(N-j),\infty}_\xi$. 
\end{lem}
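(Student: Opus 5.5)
The plan is to write $\Theta=e^{F}$ and insert the expansion of $F$ given by Lemma \ref{lem:expansionF}, which now holds up to order $N$ because Proposition \ref{propal:bigpropalexpw} holds at order $N$. For fixed $0\leq l\leq N$ we write
\begin{align*}
F(t,\xi)=F_{0,0}(\xi)+\ln(t)F_{0,1}(\xi)+G^l(t,\xi)+r^l_F(t,\xi),\qquad G^l(t,\xi):=\sum^{l}_{j=1}\sum^{2j}_{k=0}\frac{\ln(t)^{k}}{t^{j}}F_{j,k}(\xi).
\end{align*}
Since $F_{0,1}=i\lambda_1|\hat{w}_{0,0}|^2$, this gives
\begin{align*}
\Theta=e^{i\ln(t)\lambda_1|\hat{w}_{0,0}|^2+F_{0,0}}\,e^{G^l}\,e^{r^l_F}.
\end{align*}
The prefactor is unimodular. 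It is not bounded in $W^{k,\infty}_\xi$ uniformly in $t$, because each $\xi$-derivative produces a factor $\ln(t)$. This is why the statement keeps it in front and only claims expansions for the remaining factors.

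For the factor $e^{G^l}$, the function $G^l$ is $O(\ln(t)^{2}/t)$ in $W^{2(N-1),\infty}_\xi$, and more precisely its order-$j$ part lies in $W^{2(N-j),\infty}_\xi$. We Taylor expand the exponential:
\begin{align*}
e^{G^l}=\sum_{p=0}^{l}\frac{(G^l)^p}{p!}+O\big((G^l)^{l+1}\big).
\end{align*}
We then collect terms by powers of $t^{-1}$. A product of terms of orders $j_1,\dots,j_p$ has order $j=\sum j_i$ and carries at most $\sum 2j_i=2j$ powers of $\ln(t)$. Its coefficient lies in $W^{2(N-\max j_i),\infty}_\xi\supset W^{2(N-j),\infty}_\xi$, because $W^{k,\infty}$ is an algebra. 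We keep the products with $j\leq l$; these define $\Theta_{j,k}$, with $\Theta_{0,0}=1$. The products with $j>l$, and the Taylor tail, are $O(t^{-l-1}\ln(t)^{2l+2})=O(t^{-l-\beta})$ in $W^{2(N-l),\infty}_\xi$.

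The factor $e^{r^l_F}$ equals $1+O(t^{-l-\beta})$ in $W^{2(N-l),\infty}_\xi$, since $r^l_F$ is small and bounded in that space.

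Multiplying the three factors, the main term has exactly the stated form, and the remainders combine into $r^l_\Theta$.

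The main obstacle is the size of $r^l_\Theta$ in $W^{2(N-l),\infty}_\xi$. Each remainder is multiplied by the prefactor $e^{i\ln(t)\lambda_1|\hat{w}_{0,0}|^2+F_{0,0}}$, and $q$ derivatives of it cost up to $\ln(t)^q$. I would absorb these logarithms by shrinking $\beta$ slightly, since $\ln(t)^{2N}t^{-l-\beta}=O(t^{-l-\beta'})$ for any $\beta'<\beta$. This is the same loss already tolerated in Proposition \ref{propal:controlerror}. The coefficients $\Theta_{j,k}$ themselves do not involve the prefactor, so their regularity $W^{2(N-j),\infty}_\xi$ is unaffected.
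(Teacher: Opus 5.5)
Your proof is correct and follows essentially the paper's route: factor out the unimodular phase $e^{i\ln(t)\lambda_1|\hat{w}_{0,0}|^2+F_{0,0}}$, Taylor expand the exponential of the decaying part of $F$ using Lemma \ref{lem:expansionF} at order $N$, and regroup by powers of $t^{-1}$. The paper expands $e^{F-F_{0,1}\ln(t)-F_{0,0}}$ directly instead of splitting off $e^{r^l_F}$, which is only a cosmetic difference.

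Your handling of the Taylor tail through $G^l=O(\ln(t)^2/t)$, and of the $\ln(t)$-losses from differentiating the prefactor, is more careful than the paper's, which invokes only the zeroth-order expansion of $F$ at that step. Two small slips to fix: the inclusion should read $W^{2(N-\max j_i),\infty}_\xi\subset W^{2(N-j),\infty}_\xi$, and $G^l$ is $O(\ln(t)^2/t)$ in $W^{2(N-l),\infty}_\xi$, not in $W^{2(N-1),\infty}_\xi$.
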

\begin{proof}
For $0\leq l\leq N$, we compute first 
\begin{align*}
\Theta(t,\xi)=&e^{F(t,\xi)}\\
=&\underbrace{e^{i\ln(t)\lambda_1|\hat{w}_{0,0}(\xi)|^2+F_{0,0}(\xi)}(\sum_{j=0}^l\frac{1}{j!}\left(F(t,\xi)-i\ln(t)\lambda_1|\hat{w}_{0,0}(\xi)|^2-F_{0,0}(\xi)\right)^j)}_\text{$\Theta'(t,\xi)$}\\
&+\underbrace{e^{i\ln(t)\lambda_1|\hat{w}_{0,0}(\xi)|^2+F_{0,0}(\xi)}(e^{F(t,\xi)-i\ln(t)\lambda_1|\hat{w}_{0,0}(\xi)|^2-F_{0,0}(\xi)}-\sum_{j=0}^l\frac{1}{j!}\left(F(t,\xi)-i\ln(t)\lambda_1|\hat{w}_{0,0}(\xi)|^2-F_{0,0}(\xi)\right)^j)}_\text{$r'^l_{\Theta}(t,\xi)$}.
\end{align*}
Using the $0$-th order expansion of $F$ from Lemma \ref{lem:expansionF}, the error satisfies $||r'^l_{f}(t)||_{W^{2N,\infty}_\xi}=O(t^{-l-\delta})$ for some $0<\delta<1$. Then, for the $j$-th term of the Taylor expansion in $\Theta'$, we use the expansion of $F$ up to order $l-j+1$ to have
\begin{align*}
\Theta'(t,\xi)&=e^{i\ln(t)\lambda_1|\hat{w}_{0,0}(\xi)|^2+F_{0,0}(\xi)}(\sum_{j=0}^l\frac{1}{j!}\left(\sum^{l-j+1}_{k=1}\sum^{2k}_{m=0}\frac{\ln(t)^{m}}{t^{k}}F_{k,m}(\xi)+ r^{l-j+1}_F(t,\xi)\right)^j).
\end{align*}
Then, for $j\geq1$, we compute that
\begin{align*}
\left(\sum^{l-j+1}_{k=1}\sum^{2k}_{m=0}\frac{\ln(t)^{m}}{t^{k}}F_{k,m}(\xi)+ r^{l-j+1}_F(t,\xi)\right)^j=&\sum_{\overrightarrow{\mu}\in\mathbb{N}^{(l-j+1)},|\overrightarrow{\mu}|=j,\;J\in\mathscr{J}_{\overrightarrow{\mu}}}\binom{j}{J}\left(\prod^{l-j+1}_{k=1}\prod^{2k}_{m=0}(\frac{\ln(t)^{m}}{t^{k}}F_{k,m}(\xi))^{J_{k,m}}\right)\\
&+r^{l,j}_\Theta(t,\xi)
\end{align*}
where $r^{l,j}_F(t)\in W^{2(N-l+j-1),\infty}_\xi$ and where $\mathscr{J}_{\overrightarrow{\mu}}$ is from definition \ref{defi:config}, with $\binom{j}{J}=\frac{j!}{\prod^{l-j+1}_{k=1}\prod^{2k}_{m=0}J_{k,m}!}$. Then, rearranging the different terms by powers of $t^{-1}$ gives 
\begin{align*}
\left(\sum^{l-j+1}_{k=1}\sum^{2k}_{m=0}\frac{\ln(t)^{m}}{t^{k}}F_{k,m}(\xi)+ r^{l-j+1}_F(t,\xi)\right)^j&=\sum_{k=j}^l\sum_{m=0}^{2k}\frac{\ln(t)^{m}}{t^{k}}\Theta^j_{k,m}(\xi)+r'^{l,j}_\Theta(t,\xi)
\end{align*}
with $\Theta^j_{k,m}\in W^{2(N-k),\infty}_\xi$, a product of $j$ factors of $F_{m,k}$-terms, and $r'^{l,j}_\Theta(t)\in W^{2(N-l+j-1),\infty}_\xi$. Finally, collecting all contributions yields
\begin{align*}
\Theta(t,\xi)&=e^{i\ln(t)\lambda_1|\hat{w}_{0,0}(\xi)|^2+F_{0,0}(\xi)}(\sum_{j=0}^l\sum_{k=0}^{2j}\frac{\ln(t)^{k}}{t^{j}}\Theta_{j,k}(\xi))+r^l_{\Theta}(t,\xi)
\end{align*}
with 
\begin{align*}
\Theta_{0,0}(\xi)=1,
\end{align*}
and
\begin{align*}
\Theta_{j,k}(\xi)=\sum_{m=1}^{j}\frac{1}{m!}\Theta^m_{j,k}(\xi),
\end{align*}
if $j\geq1$, and
\begin{align*}
r^l_\Theta(t,\xi)= e^{i\ln(t)\lambda_1|\hat{w}_{0,0}(\xi)|^2+F_{0,0}(\xi)}\sum_{j=1}^lr'^{l,j}_\Theta(t,\xi)+r'^l_{\Theta}(t,\xi),
\end{align*}
which satisfies the required decay estimate. 
\end{proof}
We now deduce the asymptotic expansion of $\hat{f}(t)$.
\begin{proof}[Proof of proposition \ref{propal:bigpropalexpf}]
For all $0\leq l\leq N$, we write
\begin{align*}
\hat{f}(t,\xi)&=\overline{\Theta(t,\xi)}\hat{w}(t,\xi)\\
&=e^{-i\ln(t)\lambda_1|\hat{w}_{0,0}(\xi)|^2-F_{0,0}(\xi)}\left(\sum_{j=0}^l\sum_{k=0}^{2j}\frac{\ln(t)^{k}}{t^{j}}\overline{\Theta_{j,k}(\xi)}+\overline{r^l_{\Theta}(t,\xi)}\right)\left(\sum^{l}_{j=0}\sum^{2j}_{k=0}\frac{\ln(t)^k}{t^j}\hat{w}_{j,k}(\xi)+r^{l}(t,\xi)\right)\\
&=e^{-i\ln(t)\lambda_1|\hat{w}_{0,0}(\xi)|^2-F_{0,0}(\xi)}\left(\sum_{j=0}^l\sum_{k=0}^{2j}\frac{\ln(t)^{k}}{t^{j}}\hat{f}(\xi)_{j,k}\right)+r^l_{f}(t,\xi).
\end{align*}
where
\begin{align}
\hat{f}_{j,k}(\xi)=\sum_{\overrightarrow{\mu},\overrightarrow{\nu}\in\mathbb{N}^2,|\overrightarrow{\mu}|=j,|\overrightarrow{\nu}|=k,\overrightarrow{\nu}_l\leq 2\overrightarrow{\mu}_l}\overline{\Theta_{\overrightarrow{\mu}_1,\overrightarrow{\nu}_1}(\xi)}\hat{w}_{\overrightarrow{\mu}_2,\overrightarrow{\nu}_2}(\xi).
\end{align}
Clearly, proposition \ref{lem:expansionTheta} and proposition \ref{propal:bigpropalexpw} imply that the expansion coefficients satisfy $\hat{f}_{j,k}\in W^{2(N-j),\infty}_\xi$ and the error term satisfies $||r^l_{f}(t)||_{W^{2(N-l),\infty}_\xi}=O(t^{-l-\delta})$ for some $1>\delta>0$. Moreover,
\begin{align*}
\hat{f}_{0,0}(\xi)&=\overline{\Theta_{0,0}(\xi)}\hat{w}_{0,0}(\xi)\\
&=\hat{w}_{0,0}(\xi),
\end{align*}
which ends the proof.
\end{proof} 
\subsection{Asymptotics for the solution}
\label{subsection:asymptoticsolut}
In this section, we derive the asymptotics of the solution $u$ to \eqref{eq:NLS} itself and conclude the proof of the main Theorem \ref{unTheorem:mainTH}. 
\begin{proof}[Proof of Theorem \ref{unTheorem:mainTH}]
From proposition \ref{propal:GWP}, we know that there exists a global solution $u\in C([1,T],H^1)$ which satisfies the sharp decay estimate:
\begin{align*}
t^{1/2}||u(t)||_{L^\infty_x}&\leq ||t^{1/2}u(t)||_{L^\infty_t L^\infty_x}\\
&\lesssim C(\varepsilon_0)
\end{align*}
for all $t\geq 1$, by \eqref{eq:GWPeq}. Therefore, to get the full statement of Theorem \ref{unTheorem:mainTH}, it remains to derive the high-order asymptotic expansion of $u$. \\\\
Recall that the fundamental solution of the linear Schrödinger equation is
\begin{align}
K(t,x)=\left(\frac{1}{2i\pi t}\right)^{1/2}e^{\frac{ix^2}{2t}}.
\end{align}
Hence,
\begin{align*}
u(t,x)&=e^{it\Delta/2}e^{-it\Delta/2}u(t,x)\\
&=e^{it\Delta/2}f(t,x)\\
&=(K(t)\star f(t))(x)\\
&=\int \left(\frac{1}{2i\pi t}\right)^{1/2}e^{\frac{i|x-y|^2}{2t}} f(t,y)dy\\
&=e^{\frac{i|x|^2}{2t}}\left(\frac{1 }{i t}\right)^{1/2}\frac{1}{(2\pi)^{1/2}}\int e^{-i\frac{xy}{t}}( e^{\frac{i|y|^2}{2t}}  f(t,y))dy\\
&=e^{\frac{i|x|^2}{2t}}\left(\frac{1 }{i t}\right)^{1/2}\frac{1}{(2\pi)^{1/2}}\int e^{-i\frac{xy}{t}}(\sum_{j=0}^{N}\frac{1}{j!}\left(\frac{i|y|^2}{2t}\right)^jf(t,y)+\left(e^{\frac{i|y|^2}{2t}}-\sum_{j=0}^{N}\frac{1}{j!}\left(\frac{i|y|^2}{2t}\right)^j\right)f(t,y))dy\\
&=e^{\frac{i|x|^2}{2t}}\left(\frac{1 }{i t}\right)^{1/2}\frac{1}{(2\pi)^{1/2}}\int e^{-i\frac{xy}{t}}(\sum_{j=0}^{N}\frac{1}{j!}\left(\frac{i|y|^2}{2t}\right)^jf(t,y))dy+u_{err.1}^N(t,x).
\end{align*}
We can already state that, for $1/4>\beta>0$, the remainder satisfies  
\begin{align*}
u_{err.1}^N(t,x)&\lesssim\frac{1}{t^{1/2}}\int \left(\frac{|y|^2}{2t}\right)^{N+\beta}f(t,y)dy\\
&\lesssim\frac{1}{t^{N+1/2+\beta}}\int \langle y\rangle^{2(N+\beta)}f(t,y)dy\\
&\lesssim\frac{1}{t^{N+1/2+\beta-\alpha_{2N+1}}}||t^{-\alpha_{2N+1}}\langle y\rangle^{2(N+\beta)+1/2+\varepsilon}f||_{L^\infty_t L^2_{x}}.
\end{align*}
Thus, using the bound $||t^{-\alpha_{2N+1}}f||_{L^\infty_t H^{0,2N+1}_x}<C_0$ of \eqref{eq:GWPeq} in proposition \ref{propal:GWP} leads to
\begin{align*}
u_{err.1}^N(t,x)\lesssim\frac{C_0}{t^{N+1/2+\beta-\alpha_{2N+1}}}.
\end{align*}
This implies that $||u_{err.1}^N(t)||_{L^{\infty}_x}=O(t^{-1/2-N-\delta})$ for some $\delta>0$. On the other hand, 
\begin{align*}
\frac{1}{(2\pi)^{1/2}}\int e^{-i\frac{xy}{t}}(\sum_{j=0}^{N}\frac{1}{j!}\left(\frac{i|y|^2}{2t}\right)^jf(t,y))dy&=\sum_{j=0}^{N}\frac{1}{t^{j}}\frac{i^j}{2^jj!}(-\Delta_\xi)^j\hat{f}\left(t,\frac{x}{t}\right).
\end{align*}
By proposition \ref{propal:bigpropalexpf}, and under the assumption of Theorem \ref{unTheorem:mainTH}, the derivatives $\partial_\xi^{2q}\hat{f}(t)$ admit asymptotic expansions up to order $N-q$ in $L^\infty_\xi$.  Each Laplacian contributes two derivatives and an additional power $\frac{1}{t}$, thus matching the balance between control on the derivatives and order of expansion. To compute $\partial_\xi^{2q}\hat{f}(t)$ explicitly, one must consider derivatives falling on the phase $-\ln(t)F_{1,0}-F_{0,0}$, which is defined in proposition \ref{propal:bigpropalexpf}. For that, we set $F_{0}=\ln(t)F_{1,0}+F_{0,0}$ and apply Faà di Bruno's formula: 
\begin{align}
\partial^{2q}_\xi\hat{f}(t)=e^{-F_{0}}\sum_{l=0}^{2q}\binom{2q}{l}(\sum_{m=0}^lB_{l,m}(-F'_{0},\dots,-F_{0}^{(l-m+1)}))\partial_\xi^{2q-l}\left(\sum_{j=0}^{N-q}\sum_{k=0}^{2j}\frac{\ln(t)^{k}}{t^{j}}\hat{f}_{j,k}\right)+\partial^{2q}r^{N-q}_{f}(t),
\end{align}
where the $B$ represents Bell's polynomials. We decompose Bell's polynomials as follows
\begin{align}
B_{l,m}(x_1+y_1,\dots,x_{l-m+1}+y_{l-m+1})=\sum_{\overrightarrow{\mu},\overrightarrow{\nu}\in\mathbb{N}^{l-m+1},|\overrightarrow{\mu}|,|\overrightarrow{\nu}|\leq m}B_{l,m}^{\overrightarrow{\mu},\overrightarrow{\nu}}\prod_{j=1}^{l-m+1}(x_{j}^{\overrightarrow{\mu}_j}y_{j}^{\overrightarrow{\nu}_j}),
\end{align}
for some coefficients $B_{l,m}^{\overrightarrow{\mu},\overrightarrow{\nu}}$ independent of $\xi$. Gathering logarithm contributions together leads to
\begin{align*}
B_{l,m}(-F'_{0},\dots,-F_{0}^{(l-m+1)})&=\sum_{\overrightarrow{\mu},\overrightarrow{\nu}\in\mathbb{N}^{l-m+1},|\overrightarrow{\mu}|,|\overrightarrow{\nu}|\leq m}B_{l,m}^{\overrightarrow{\mu},\overrightarrow{\nu}}\prod_{j=1}^{l-m+1}((-\ln(t)\partial_\xi^jF_{1,0})^{\overrightarrow{\mu}_j}(-\partial_\xi^jF_{0,0})^{\overrightarrow{\nu}_j})\\
&=\sum_{k=0}^{m}\ln(t)^k\sum_{\overrightarrow{\mu},\overrightarrow{\nu}\in\mathbb{N}^{l-m+1},|\overrightarrow{\mu}|=k,|\overrightarrow{\nu}|\leq m}B_{l,m}^{\overrightarrow{\mu},\overrightarrow{\nu}}\prod_{j=1}^{l-m+1}((-\partial_\xi^jF_{1,0})^{\overrightarrow{\mu}_j}(-\partial_\xi^jF_{0,0})^{\overrightarrow{\nu}_j}).
\end{align*}
We then use the condensate form 
\begin{align*}
B_{l,m}(-F'_{0},\dots,-F_{0}^{(l-m+1)})=\sum_{k=0}^{m}\ln(t)^kb_{l,m,k}(F_{1,0},F_{0,0})
\end{align*}
with 
\begin{align*}
b_{l,m,k}(F_{1,0},F_{0,0})=\sum_{\overrightarrow{\mu},\overrightarrow{\nu}\in\mathbb{N}^{l-m+1},|\overrightarrow{\mu}|=k,|\overrightarrow{\nu}|\leq m}B_{l,m}^{\overrightarrow{\mu},\overrightarrow{\nu}}\prod_{j=1}^{l-m+1}((-\partial_\xi^jF_{1,0})^{\overrightarrow{\mu}_j}(-\partial_\xi^jF_{0,0})^{\overrightarrow{\nu}_j}).
\end{align*}
We can now group together all the $\ln(t)^k$ contributions from the different polynomials and write 
\begin{align*}
\partial^{2q}_\xi\hat{f}(t)&=e^{-F_{0}}\sum_{j=0}^{N-q}\frac{1}{t^{j}}\sum_{l=0}^{2q}\binom{2q}{l}\sum_{m=0}^{l}(\sum_{k_1=0}^{m}\sum_{k_2=0}^{2j}\ln(t)^{k_1+k_2}b_{l,m,k_1}\partial_\xi^{2q-l}\hat{f}_{j,k_2})+\partial^{2q}r^{N-q}_{f}(t),\\
&=e^{-F_{0}}\sum_{j=0}^{N-q}\sum_{k=0}^{2(j+q)}\frac{\ln(t)^k}{t^{j}}\hat{f}^{2q}_{j,k}+\partial^{2q}r^{N-q}_{f}(t),
\end{align*}
where we replace $b_{l,m,k}(F_{1,0},F_{0,0})$ by $b_{l,m,k}$ to lighten the notation and where 
\begin{align}
\hat{f}^{2q}_{j,k}=\sum_{0\leq k_1\leq2q,0\leq k_2\leq2j, |k_1|+|k_2|=k}(\sum_{m=k_1}^{2q}\sum_{l=m}^{2q}\binom{2q}{l}b_{l,m,k_1}\partial_\xi^{2q-l}\hat{f}_{j,k_2}).
\end{align}
Substituting into the previous expansion gives
\begin{align}
\label{eq:exputerrible}
\frac{1}{(2\pi)^{1/2}}\int e^{-i\frac{xy}{t}}(\sum_{j=0}^{N}\frac{1}{j!}\left(\frac{i|y|^2}{2t}\right)^jf(t,y))dy&=e^{-i\ln(t)\lambda_1|\hat{w}_{0,0}\left(\frac{x}{t}\right)|^2-F_{0,0}\left(\frac{x}{t}\right)}\sum_{j=0}^{N}\sum_{k=0}^{2j}\frac{\ln(t)^k}{t^{j}}u_{j,k}\left(\frac{x}{t}\right)+u_{err.2}^N(t,x)
\end{align}
with 
\begin{align*}
u_{j,k}\left(\frac{x}{t}\right)=\sum_{m=0}^j\frac{(-i)^m}{2^mm!}\hat{f}^{2m}_{j-m,k}\left(\frac{x}{t}\right)
\end{align*}
and $||u_{err.2}^N(t)||_{L^{\infty}_x}=O(t^{-1/2-N-\delta})$ for some $\delta>0$.
In particular, the terms $\hat{f}^{2q}_{j,k}$ are defined for $k\leq2(j+q)$, contrary to the terms $\hat{f}_{j,k}$, which are only defined for $k\leq 2j$. Since the terms $\hat{f}^{2q}_{j,k}$ appear with an additional factor $\frac{1}{t^q}$ in the expansion for $u$, no logarithmic powers $\frac{\ln(t)^k}{t^j}$ with $k>2j$ arise in this expansion. Consequently
\begin{align}
u(t,x)=\frac{e^{\frac{ix^2}{2t}-i\lambda_1|u_{0,0}\left(\frac{x}{t}\right)|^2\ln(t))-i\varphi(\frac{x}{2
t})}}{(it)^{1/2}}\sum_{p=0}^N\sum^{2p}_{k}\frac{\ln(t)^k}{t^p}u_{p,k}\left(\frac{x}{t}\right)+u_{err.}(t,x),
\end{align}
for $u_{err.}(t,x)=u_{err.1}^N+u_{err.2}^N$.\\\\
In particular, 
\begin{align*}
u_{0,0}\left(\frac{x}{t}\right)&=\hat{f}_{0,0}\left(\frac{x}{t}\right)\\
&=\hat{w}_{0,0}\left(\frac{x}{t}\right),
\end{align*}
so that the phase correction is exactly $e^{-i\ln(t)\lambda_1|u_{0,0}\left(\frac{x}{t}\right)|^2-i\varphi\left(\frac{x}{t}\right)}$ (for $i\varphi=F_{0,0}$) as stated in Theorem \ref{unTheorem:mainTH}. 
\end{proof}
\appendix
\section{Explicit form of the expansion at order 2}
\label{section:appendix}
In the appendix, we compute the expansion at order 2 ($N=1$) of the solution $u$ to \eqref{eq:NLS}, for fixed parameters $\lambda_1\neq0$ and $\lambda_2\neq0$. At this order, only the cubic and quintic nonlinearities contribute to the asymptotic dynamics. Consequently, the remaining parameters $(\lambda_n)_{n\geq3}$ play no role in this analysis. Our derivation essentially relies on straightforward Taylor expansions and applications of the results established in Section \ref{section:asymptotic}. \\\\
We also compare our expansion with that obtained in \cite{10.1063/1.522967}. To this end, we adopt their notation, which first requires setting  $\lambda_1=-2\alpha$, for $\alpha=\pm1$, and, more importantly, replacing $\Delta/2$ with $\Delta$ in \eqref{eq:NLS}. To account for this difference, we first derive the expansion for the solution $u$ of \eqref{eq:NLS} in our normalization, and then set $v(t,x)=u\left(t,\frac{1}{\sqrt{2}}x\right)$ which solves \eqref{eq:NLS} with $\Delta/2$ replaced by $\Delta$. \\\\
To obtain the explicit form of the expansion for $u$, we just follow the procedure of section \ref{section:asymptotic}, assuming that the initial data satisfy $||u_1||_{H^{1,0}_x}+||f_1||_{H^{0,2N+1}_x}<\varepsilon_0$, for $\varepsilon_0$ sufficiently small, in order to make the expansion rigorous. We begin by deriving the asymptotics of $\hat{w}$. Applying lemma \ref{lem:mainofbasecase} on the base case leads to $\hat{w}=\hat{w}_{0,0}+O(t^{-\delta})$ for some $\delta>0$. To compute the next order explicitly, we use \eqref{eq:organizedexpw} at order 1, which gives
\begin{align*}
 \partial_t\hat{w}(t)&=-i\frac{\lambda_1}{t^2}\Theta(t)\mathfrak{P}^1_1(t)-i\frac{\lambda_2}{t^2}\Theta(t)\mathfrak{P}^2_0(t)-i\Theta(t)\mathfrak{R}^1_2(t)-i\Theta(t)\mathfrak{R}^2_1(t)-i\Theta(t,\xi)\mathfrak{R}^{>2}_0(t)\\
 &=\frac{\lambda_1}{t^2}\Theta(t)\left(2\hat{f}|\partial_\xi\hat{f}|^2+(\partial_\xi\hat{f})^2\overline{\hat{f}}+\hat{f}^2\overline{\partial_\xi^2\hat{f}}\right)(t)-i\frac{\lambda_2}{t^2}\Theta(t)|\hat{f}(t)|^4\hat{f}(t)+O(t^{-2-\delta}).
\end{align*}
Now, as done in lemma \ref{lem:nonlinearityPexactform} we replace $\hat{f}$ by $\overline{\Theta}\hat{w}$ and we simplify the expression with $\overline{\Theta}\Theta=1$. For that, we first set $\chi=-iF$, so that $\hat{w}=e^{i\chi}\hat{f}$, and compute 
\begin{align*}
\Theta\mathfrak{P}^1_1=&i\left(2\hat{w}|\partial_\xi\hat{w}|^2+(\partial_\xi\hat{w})^2\overline{\hat{w}}+\hat{w}^2\overline{\partial_\xi^2\hat{w}}\right)\\
&+2i\hat{w}\left(-\partial_\xi\hat{w}\overline{i\partial_\xi\chi\hat{w}}-\overline{\partial_\xi\hat{w}}i\partial_\xi\chi\hat{w}+\partial_\xi\chi^2|\hat{w}|^2\right)\\
&+i\overline{\hat{w}}\left(-2\partial_\xi\hat{w}i\partial_\xi\chi\hat{w}-\partial_\xi\chi^2\hat{w}^2\right)\\
&+i\hat{w}^2\left(-2\overline{i\partial_\xi\chi\partial_\xi\hat{w}}-\overline{i\partial^2_\xi\chi\hat{w}}-\overline{\partial_\xi\chi\partial_\xi\chi\hat{w}}\right)\\
=&i\left(2\hat{w}|\partial_\xi\hat{w}|^2+(\partial_\xi\hat{w})^2\overline{\hat{w}}+\hat{w}^2\overline{\partial_\xi^2\hat{w}}\right)-\hat{w}|\hat{w}|^2\partial^2_\xi\chi.
\end{align*}
Then, as in lemma \ref{lem:expansionF}, we expand $F=i\int^t_1\frac{\lambda_1|\hat{w}(\tau)|^2}{\tau}d\tau$ (or $\chi=\int^t_1\frac{\lambda_1|\hat{w}(\tau)|^2}{\tau}d\tau$) using the expansion $\hat{w}=\hat{w}_{0,0}+O(t^{-\delta})$. We obtain 
\begin{align*}
 \chi=\ln(t)\nu+\varphi+O(t^{-\delta})
 \end{align*}
  with $\nu=-iF_{0,1}=\lambda_1|\hat{w}_{0,0}|^2$ and $\varphi=-iF_{0,0}=\int^\infty_1\frac{\lambda_1|\hat{w}(\tau,\xi)|^2-|\hat{w}_{0,0}(\xi)|^2}{\tau}d\tau$. Plugging the first-order expansion of $\hat{w}$ and $\chi$ into the previous evolution equation yields
\begin{align*}
    \partial_t\hat{w}(t)=&\frac{1}{t^2}\lambda_1\left(2\hat{w}_{0,0}|\partial_\xi\hat{w}_{0,0}|^2+(\partial_\xi\hat{w}_{0,0})^2\overline{\hat{w}_{0,0}}+\hat{w}_{0,0}^2\overline{\partial_\xi^2\hat{w}_{0,0}}+i\hat{w}_{0,0}|\hat{w}_{0,0}|^2\partial^2_\xi\varphi\right)-i\frac{\lambda_2}{t^2}|\hat{w}_{0,0}|^4\hat{w}_{0,0}\\
    &+\frac{\ln(t)}{t^2}i\lambda_1\hat{w}_{0,0}|\hat{w}_{0,0}|^2\partial^2_\xi\nu,\\
    &+O(t^{-2-\delta})
\end{align*}
which implies
\begin{align}
    \hat{w}(t)&=\hat{w}_{0,0}+\frac{1}{t}\hat{w}_{1,0}+\frac{\ln(t)}{t}\hat{w}_{1,1}+\frac{\ln(t)^2}{t}\hat{w}_{1,2}+O(t^{-1-\delta})
\end{align}
where
\begin{align*}
&\hat{w}_{1,0}=-\lambda_1(2\hat{w}_{0,0}|\partial_\xi\hat{w}_{0,0}|^2+(\partial_\xi\hat{w}_{0,0})^2\overline{\hat{w}_{0,0}}+\hat{w}_{0,0}^2\overline{\partial_\xi^2\hat{w}_{0,0}}+i\hat{w}_{0,0}|\hat{w}_{0,0}|^2\partial^2_\xi\varphi+i\hat{w}_{0,0}|\hat{w}_{0,0}|^2\partial^2_\xi\nu)+i\lambda_2|\hat{w}_{0,0}|^4\hat{w}_{0,0}\\
&\hat{w}_{1,1}=-i\lambda_1\hat{w}_{0,0}|\hat{w}_{0,0}|^2\partial^2_\xi\nu\\
&\hat{w}_{1,2}=0.
\end{align*}
The next step is to obtain the first-order expansion of $\hat{f}$, which requires knowing the expansions of $F$ and $\Theta(t)=e^{F}(t)$.
As in lemma \ref{lem:expansionF}, $F$ is decomposed as follows:
\begin{align*}
F(t)&=i\lambda_1\int^t_1\frac{|\hat{w}(t)|^2}{\tau}d\tau\\
&=\ln(t)F_{0,1}(\xi)+F_{0,0}(\xi)-i\int^\infty_t\frac{\lambda_1(|\hat{w}(\tau,\xi)|^2-|\hat{w}_{0,0}(\xi)|^2)}{\tau}d\tau.
\end{align*}
This implies that 
\begin{align}
F(t)=F_{0,1}\ln(t)+F_{0,0}+\frac{1}{t}F_{1,0}+\frac{\ln(t)}{t}F_{1,1}+\frac{\ln(t)^2}{t}F_{1,2}+O(t^{-1-\beta})
\end{align}
with
\begin{align*}
F_{0,1}&=i\lambda_1|\hat{w}_{0,0}|^2=i\nu\\
F_{0,0}&=i\int^\infty_1\frac{\lambda_1(|\hat{w}(\tau)|^2-|\hat{w}_{0,0}|^2)}{\tau}d\tau=i\varphi\\
F_{1,0}&=-i\lambda_12(\Re(\hat{w}_{0,0}\overline{\hat{w}_{1,0}})+\Re(\hat{w}_{0,0}\overline{\hat{w}_{1,1}})+2\Re(\hat{w}_{0,0}\overline{\hat{w}_{1,2}}))\\
F_{1,1}&=-i\lambda_12(\Re(\hat{w}_{0,0}\overline{\hat{w}_{1,1}})+2\Re(\hat{w}_{0,0}\overline{\hat{w}_{1,2}}))\\
F_{1,2}&=-i\lambda_12\Re(\hat{w}_{0,0}\overline{\hat{w}_{1,2}}).
\end{align*}
Then, applying lemma \ref{lem:expansionTheta} yields
\begin{align*}
\Theta(t)=&e^{F(t)}\\
=&e^{\ln(t)F_{0,1}+F_{0,0}}(\sum_{j=0}^1\frac{1}{j!}\left(F(t)-\ln(t)F_{0,0}-F_{0,1}\right)^j)\\
&+e^{\ln(t)F_{0,1}+F_{0,0}}(e^{F(t,\xi)-\ln(t)F_{0,1}-F_{0,0}}-\sum_{j=0}^1\frac{1}{j!}\left(F(t)-\ln(t)F_{0,1}-F_{0,0}\right)^j)\\
=&e^{i\ln(t)\nu+i\varphi}(1+\frac{1}{t}F_{1,0}+\frac{\ln(t)}{t}F_{1,1}+\frac{\ln(t)^2}{t}F_{1,2})+O(t^{-1-\delta}).
\end{align*}
Using proposition \ref{propal:bigpropalexpf}, we now obtain the expansion of $\hat{f}$ with 
\begin{align*}
\hat{f}(t)&=\overline{\Theta(t)}\hat{w}(t)\\
&=e^{-i\ln(t)\nu-i\varphi}\left(\sum_{j=0}^1\sum_{k=0}^{2j}\frac{\ln(t)^{k}}{t^{j}}\hat{f}_{j,k}\right)+O(t^{-1-\delta}).
\end{align*}
where
\begin{align}
\hat{f}_{j,k}(\xi)=\sum_{\overrightarrow{\mu},\overrightarrow{\nu}\in\mathbb{N}^2,|\overrightarrow{\mu}|=j,|\overrightarrow{\nu}|=k,\overrightarrow{\nu}_l\leq 2\overrightarrow{\mu}_l}\overline{\Theta_{\overrightarrow{\mu}_1,\overrightarrow{\nu}_1}(\xi)}\hat{w}_{\overrightarrow{\mu}_2,\overrightarrow{\nu}_2}(\xi).
\end{align}
that is 
\begin{align*}
\hat{f}_{0,0}&=\hat{w}_{0,0}\\
\hat{f}_{1,0}&=\hat{w}_{1,0}+\hat{w}_{0,0}\overline{F_{1,0}}\\
\hat{f}_{1,1}&=\hat{w}_{1,1}+\hat{w}_{0,0}\overline{F_{1,1}}\\
\hat{f}_{1,2}&=\hat{w}_{1,2}+\hat{w}_{0,0}\overline{F_{1,2}}.
\end{align*}
Finally,  the asymptotics for the solution $u$ itself is computed as in section \ref{subsection:asymptoticsolut} with formula \eqref{eq:exputerrible}:
\begin{align*}
u(t,x)&=e^{\frac{i|x|^2}{2t}}\left(\frac{1 }{it}\right)^{1/2}\frac{1}{(2\pi)^{1/2}}\int e^{-i\frac{xy}{t}}(\sum_{j=0}^{1}\frac{1}{j!}\left(\frac{i|y|^2}{2t}\right)^jf(t,y))dy+O(t^{-3/2-\delta})\\
&=\frac{1 }{(it)^{1/2}}e^{\frac{i|x|^2}{2at}-i\ln(t)\nu\left(\frac{x}{at}\right)-i\varphi\left(\frac{x}{t}\right)}\sum_{j=0}^{1}\sum_{k=0}^{2j}\frac{\ln(t)^k}{t^{j}}u_{j,k}\left(\frac{x}{t}\right)+O(t^{-3/2-\delta})
\end{align*}
with
\begin{align*}
u_{0,0}&=\hat{f}_{0,0}\\
u_{1,0}&=\hat{f}_{1,0}-\frac{i}{2}\Delta_\xi\hat{f}_{0,0}-\frac{\Delta_\xi\varphi}{2}\hat{f}_{0,0}-\frac{2\partial_\xi\varphi}{2a}\partial_\xi\hat{f}_{0,0}+\frac{i(\partial_\xi\varphi)^2}{2}\hat{f}_{0,0}\\
u_{1,1}&=\hat{f}_{1,1}-\frac{\Delta_\xi\nu}{2a}\hat{f}_{0,0}-\frac{2\partial_\xi\nu}{2}\partial_\xi\hat{f}_{0,0}+\frac{2i\partial_\xi\varphi\partial_\xi\nu}{2}\hat{f}_{0,0}\\
u_{1,2}&=\hat{f}_{1,2}+\frac{i(\partial_\xi\nu)^2}{2}\hat{f}_{0,0}.\\
\end{align*}
Combining all previous computations, we express all terms with respect to the coefficients $\hat{w}_{0,0}$, $\varphi$ and $\nu$. Firstly, we write $\hat{w}_{0,0}=e^{i\zeta}\rho$ for $\rho=|\hat{w}_{0,0}|$. Then, 
\begin{align*}
\hat{w}_{1,0}&=-\lambda_1e^{i\zeta}(3\rho(\partial_\xi\rho)^2+\rho^2\partial^2_\xi\rho-i(\partial_\xi^2\zeta+\partial_\xi^2\varphi)\rho^3+i\partial^2_\xi\nu\rho^3)+i\lambda_2e^{i\zeta}\rho^5\\
\hat{w}_{1,1}&=-i\lambda_1e^{i\zeta}\rho^3\partial^2_\xi\nu\\
\hat{w}_{1,2}&=0\\
F_{0,1}&=i\nu\\
F_{0,0}&=i\varphi\\
F_{1,0}&=i\lambda_1^2(3\rho^2(\partial_\xi\rho)^2+\rho^3\partial^2_\xi\rho)\\
F_{1,1}&=0\\
F_{1,2}&=0\\
\hat{f}_{0,0}&=e^{i\zeta}\rho\\
\hat{f}_{1,0}&=-\lambda_1e^{i\zeta}(3\rho(\partial_\xi\rho)^2+\rho^2\partial^2_\xi\rho-i(\partial_\xi^2\zeta+\partial_\xi^2\varphi)\rho^3+i\partial^2_\xi\nu\rho^3)+i\lambda_2e^{i\zeta}\rho^5-ie^{i\zeta}\lambda_1^2(3\rho^3(\partial_\xi\rho)^2+\rho^4\partial^2_\xi\rho)\\
\hat{f}_{1,1}&=-i\lambda_1e^{i\zeta}\rho^3\partial^2_\xi\nu\\
\hat{f}_{1,2}&=0\\
u_{0,0}&=e^{i\zeta}\rho\\
u_{1,0}&=-\lambda_1e^{i\zeta}(3\rho(\partial_\xi\rho)^2+\rho^2\partial^2_\xi\rho-i(\partial_\xi^2\zeta+\partial_\xi^2\varphi)\rho^3+i\partial^2_\xi\nu\rho^3)+i\lambda_2e^{i\zeta}\rho^5-ie^{i\zeta}\lambda_1^2(3\rho^3(\partial_\xi\rho)^2+\rho^4\partial^2_\xi\rho)\\
&+e^{i\zeta}(-i\Delta_\xi\rho-\frac{(\partial_\xi^2\zeta+\partial_\xi^2\varphi)}{2}\rho-\frac{2(\partial_\xi\zeta+\partial_\xi\varphi)}{2}\partial_\xi\rho-\frac{i(\partial_\xi\zeta+\partial_\xi\varphi)^2}{2}\rho)\\
u_{1,1}&=-i\lambda_1e^{i\zeta}\rho^3\partial^2_\xi\nu+e^{i\zeta}(-\frac{\partial_\xi^2\nu}{2}\rho-\frac{2\partial_\xi\nu}{2}\partial_\xi\rho+\frac{2i(\partial_\xi\zeta+\partial_\xi\varphi)\partial_\xi\nu}{2}\rho)\\
u_{1,2}&=e^{i\zeta}\frac{i(\partial_\xi\nu)^2}{2}\rho.
\end{align*}
Then, to match \cite{10.1063/1.522967}, we write 
\begin{align}
v(t,x):=u(t,\frac{1}{\sqrt{2}}x)=\frac{1 }{t^{1/2}}e^{\frac{i|x|^2}{2t}-i\ln(t)\nu\left(\frac{x}{t}\right)}\sum_{j=0}^{1}\sum_{k=0}^{2j}\frac{\ln(t)^k}{t^{j}}e^{g\left(\frac{x}{t}\right)}v_{j,k}\left(\frac{x}{t}\right)+O(t^{-3/2-\delta}),
\end{align}
for $g\left(\frac{x}{t}\right)=(-\zeta\left(\frac{x}{\sqrt{2}t}\right)-\varphi\left(\frac{x}{\sqrt{2}t}\right)-\pi/4)$. Then, $\nu=\lambda_1\rho^2$ leads to
\begin{align*}
v_{0,0}=&h\\
v_{1,0}=&-2\lambda_1(3h(\partial_\xi h)^2+h^2\partial^2_\xi h)+\partial_\xi^2gh+2\partial_\xi g\partial_\xi h\\
&-2\lambda_1i\partial_\xi^2gh^3-i\lambda_1^2(10\partial_\xi h\partial_\xi h+6h\partial^2_\xi h) h^3+i\lambda_2h^5+2(-i\Delta_\xi h-\frac{i(\partial_\xi h)^2}{2}h)\\
v_{1,1}=&-2\lambda_1(3h(\partial_\xi h)^2+h^2\partial^2_\xi h)-4i\lambda_1^2h^3(\partial_\xi h\partial_\xi h+h\partial^2_\xi h)-4\lambda_1i\partial_\xi gh^2\partial_\xi h\\
v_{1,2}=&i(2\lambda_1h\partial_\xi h)^2h,\\
\end{align*}
where $h\left(\frac{x}{t}\right)=\rho\left(\frac{x}{\sqrt{2}t}\right)$. Here $h$ denotes $f$ in the notation of \cite{10.1063/1.522967}. This choice avoids confusion with the profile $f$ of the present paper. For $h$ and $g$ two given functions and for $\lambda_1=-2\alpha$ where $\alpha=\pm1$, the coefficients in the sense of \cite{10.1063/1.522967} are
\begin{align*}
h_{1,0}&:=\Re(v_{1,0})=4\alpha(3h(\partial_\xi h)^2+h^2\partial^2_\xi h)+\partial_\xi^2gh+2\partial_\xi g\partial_\xi h\\
h_{1,1}&:=\Re(v_{1,1})=4\alpha(3h(\partial_\xi h)^2+h^2\partial^2_\xi h)\\
h_{1,2}&:=\Re(v_{1,2})=0\\
\theta_{2,0}h&:=\Im(v_{1,0})=-2\lambda_1\partial_\xi^2gh^3-\lambda_1^2(10\partial_\xi h\partial_\xi h+6h\partial^2_\xi h) h^3+2(-\Delta_\xi h-\frac{(\partial_\xi h)^2}{2}h)+\lambda_2h^5\\
\theta_{2,1}h&:=\Im(v_{1,1})=-16h^3(\partial_\xi h\partial_\xi h+h\partial^2_\xi h)+8\alpha\partial_\xi gh^2\partial_\xi h\\
\theta_{2,2}h&:=\Im(v_{1,2})=16(h\partial_\xi h)^2h.
\end{align*}
The coefficients $h_{1,0}$, $h_{1,1}$, $h_{1,2}$, $\theta_{2,1}$ and $\theta_{2,2}$ can be compared with those obtained in \cite{10.1063/1.522967}. The coefficient $\theta_{2,0}$ is not computed in \cite{10.1063/1.522967}. Here, this term is the only one that contains the contribution of the quintic nonlinearity, through the expression $\lambda_2h^5$. This quintic term appears at order $t^{-1}$ in the phase as a vanishing correction. In contrast, the cubic term $-2\alpha h^3$ (or, more generally, $\lambda_1 h^3$) appears multiplied by a logarithmic factor $\ln(t)$ and therefore induces long-range effects. 
\printbibliography
\end{document}